\documentclass[12pt,a4paper, reqno]{amsart}
\usepackage{amsfonts,amsmath,amsthm,amssymb, amscd}
\usepackage[top=2cm, bottom=5cm, left=3cm, right=1cm]{geometry}

\def\gp#1{\langle#1\rangle}


\newtheorem{theorem}{Theorem}
\newtheorem{lemma}{Lemma}
\newtheorem{corollary}{Corollary}

\newtheorem{example}{Example}
\newtheorem{conjecture}{Conjecture}
\newtheorem{remark}{Remark}

\title[Maximal commutative subalgebras]{Maximal commutative subalgebras of a Grassmann algebra}

\author{Victor A.~Bovdi, Ho-Hon Leung}
\address{UAEU, Al-Ain, United Arab Emirates}
\email{ vbovdi@gmail.com,  hohon.leung@uaeu.ac.ae}

\keywords {Grassmann algebra, exterior algebra, maximal commutative subalgebra}
\thanks{Supported by the UAEU grants:  UPAR  Grant G00002599 and StartUp Grant: G00002235}
\subjclass{15A75, 05D05,  16W55, 13A02}

\begin{document}
\maketitle

\begin{abstract}
We investigate the structure of maximal commutative subalgebras of the finite dimensional  Grassmann algebra  over  a field  of characteristic different from two.
\end{abstract}
\section{Introduction}

The Grassmann algebra  (exterior algebra)  $G(n)$ over a field $F$ of characteristic different from two  is the following  finite dimensional associative algebra of rank $n$:
\begin{align}\label{gwbRG}
G(n)&=F[x_1,\ldots, x_n]/\gp{x_ix_j+x_jx_i\mid 1\leq i,j\leq n}_F.
\end{align}
The Grassmann algebra is widely used in ring theory,  differential geometry and the theory of manifolds
(for example, see \cite{Bokut_Kharchenko, Borisenko_Nikolaevskii, Kemer, Schlote}).

It is well known (for example, see \cite{Bokut_Kharchenko}) that $\mathrm{dim}_{F} G(n)=2^n$  and the identity $[[x,y],z]=0$ is satisfied for all $x,y,z\in G(n)$.  The algebra $G(n)$ has a large center and it is natural to investigate the structure of commutative subalgebras in $G(n)$. It was recently studied by Domokos and Zubor \cite{Domokos_Zubor}. Historically, the structure of commutative subalgebras are widely studied in certain rings and algebras (for example, see \cite{Bavula, Courter, Guterman_Markova, Hartwig_Oinert, Panyushev_Yakimova, Song, Zelazko}).

When $n$ is even, the structure of maximal commutative subalgebras (with respect to inclusion) in $G(n)$ is quite well-understood. In particular, Domokos and Zubor showed that all such maximal commutative subalgebras in $G(n)$ of even rank $n$ have dimension $3\cdot 2^{n-2}$ (Corollary 2.4, Theorem 7.1 (i) in \cite{Domokos_Zubor}) despite the fact that not all of them are isomorphic (Theorem 7.1 (ii) in \cite{Domokos_Zubor}).

However, the structure of maximal commutative subalgebras (with respect to inclusion) in $G(n)$ of odd rank $n$ is less clear. Domokos and Zubor obtained the maximum dimension of commutative subalgebras (Theorem 7.1 (i) in \cite{Domokos_Zubor}). Some partial results on the structure of maximal commutative subalgebras (with respect to inclusion) were obtained for $n=5$ and $n=7$ (Proposition 7.5 in \cite{Domokos_Zubor}).

The main goal of this paper is to provide new constructions of maximal commutative subalgebras (with respect to inclusion) of $G(n)$ of odd rank $n$. Our constructions are based on certain intersecting families of subsets of odd size in the power set of $n$. We make the connection between maximal commutative subalgebras of $G(n)$ and such combinatorial objects explicit in Section \ref{S:1}. In Section \ref{section2}, we introduce an intersecting family of subsets of odd size based on the classical Fano plane. It is the foundation of our constructions in Section \ref{Section3} and Section \ref{Section4}.

Based on our results in Section \ref{Section3} and Section \ref{Section4}, we make inroads into solving two  conjectures posted by Domokos and Zubor (Conjecture 7.3 and Conjecture 7.4 in \cite{Domokos_Zubor}). Along the way we state a few conjectures, some of which are purely computational in nature, related to our constructions of maximal commutative subalgebras in $G(n)$ of odd rank $n$.

\section{Combinatorics of the Grassmann algebra} \label{S:1}

Let $m,n\in \mathbb{N}$ such that $m \leq n$.    For two integers $i$ and $j$ ($i\leq j$), we write
\[
[i, j] = \{k \in  \mathbb{Z} \mid i \leq k \leq j\}.
\]
The set $[1, k]$ is  often abbreviated to  $[k]$. The  cardinality of a set $J$ is denoted by $|J|$.

In the Grassmann algebra   $G(n)$   of rank $n$ we can choose the basis $\{1,  v_1, \ldots, v_n\}$ such that
\[
G(n)=F\gp{1, v_1, \ldots, v_n\mid   v_iv_j=-v_jv_i, \quad 1\leq i,j\leq n}.
\]
For $I=\{i_1,\ldots ,i_k\}\subset [n]$, we denote $v_I$ by $v_I:= v_{i_1} v_{i_2}\cdots v_{i_k}$. For any $v_I,v_J\in G(n)$,  in which  $I,J\subset [n]$, we have the following:
\begin{align}
v_I v_J &= (-1)^{|I| |J|}v_J v_I. \label{E:1}
\end{align}
If $|I|$ is odd, then $v_I$ is an  \underline{{\it odd element}} in $G(n)$,  otherwise it is an \underline{{\it even element}}.

Clearly, $G(n)=G_0\oplus G_1$ is a $\mathbb{Z}_2$-graded algebra, where the \underline{{\it even part}} is
\[
G_0=\bigoplus_{k\in 2\mathbb{N}\cup \{0\}}  {\rm span}_F\{v_J \mid  J \subseteq  [n]\; \text{and}\;  |J| = k\}
 \]
and the \underline{{\it odd part}} is
 \[
G_1=\bigoplus_{k\in  \mathbb{N}\setminus 2\mathbb{N}}  {\rm span}_F\{v_J \mid  J \subseteq  [n]\; \text{and}\;   |J| = k\}.
 \]
The collection of all subsets of $[n]$ (including the empty set $\emptyset$) and the collection of all subsets of size $m$ in the set $[n]$ are  denoted  by $\mathcal{P}_n$ and $\mathcal{P}_n(m)$, respectively.  Set
\[
\mathcal{P}_n(*)=\cup_{m=1}^n\mathcal{P}_n(m)=\mathcal{P}_n\setminus \emptyset.
 \]
The nonempty collection $\mathrm{\Delta}$ of subsets of odd size in $\mathcal{P}_n(*)$  is called an {\underline{\it algebraic system}} if,  for any  $A_1\in \mathcal{P}_n \setminus \mathrm{\Delta}$ such that $|A_1|$ is even and
for any $A_2\in\mathrm{\Delta}$ such that $A_1 \cap A_2 =\emptyset$,  we have
\[
A_1 \cup A_2\in \mathrm{\Delta}.
\]
An  algebraic system $\Delta$ is called {\underline{\it commutative}}  if  any two elements from $\Delta$  have a non-trivial intersection. A commutative system $\Delta$ is called {\underline{\it maximal}}  if, for any $A\in\mathcal{P}_n(*)\setminus  \Delta$ such that  $|A|$ is odd, there exists $B\in\Delta$ such that $A\cap B=\emptyset$.

\begin{example}  \label{example1}
For $n=4$, let $\Delta=\{\{1\},\{1,2,3\},\{1,3,4\},\{1,2,4\}\}$ and $\mathcal{D}=\{\{2\},\{2,3,4\},\newline  \{1,2,3\}\}$. Then $\Delta$ is algebraic and  $\mathcal{D}$ is not algebraic because $\{1,2,4\}=\{2\}\cup \{1,4\}\notin \mathcal{D}$.
\end{example}

A subalgebra in $G(n)$ is called \underline{{\it homogeneous}} if it is spanned by monomials. It is well known that a maximal commutative subalgebra of $G(n)$ is a homogeneous subalgebra  (see also Theorem 4.3 in \cite{Domokos_Zubor}, which says that  each  commutative subalgebra in $G(n)$ can be transformed into a homogeneous commutative one by a simple process of linear transformations).

Let $M$ be a maximal commutative subalgebra (with respect to inclusion) of the Grassmann algebra $G(n)=G_0\oplus G_1$, where $n\geq 2$. From now on, we focus solely on homogeneous commutative subalgebras of a Grassmann algebra ( see the comment in the previous paragraph). Clearly, $M$ can be written as $M=G_0\oplus M_1$ where $M_1\subset G_1$. Consider
\[
\nabla_M:=\{I\subseteq [n]\mid v_I \in M_1 \}.
\]
If  $T$ is  a maximal commutative algebraic system  in $[n]$, then we  define:
\[
M_{T}:={\rm span}_F\{ v_I \mid I\in T \}.
\]We have the following lemma:

\begin{lemma} \label{L:1}
If  $M$ is  a maximal (with respect to inclusion) commutative subalgebra of $G(n)$, then $\nabla_M$ is a maximal commutative algebraic system. Moreover, if $T$  is a maximal commutative algebraic system, then $G_0\oplus M_{T}$ is a maximal commutative subalgebra in $G(n)$.
\end{lemma}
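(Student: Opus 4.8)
The plan is to read off, from the commutation law \eqref{E:1}, a dictionary between the three algebraic properties of a homogeneous subalgebra $M=G_0\oplus M_1$ and the three combinatorial properties defining a maximal commutative algebraic system, and then to verify each property in both directions. The key observation is that for odd sets $I,J$ one has $v_Iv_J=-v_Jv_I$ by \eqref{E:1}, so $v_I$ and $v_J$ commute if and only if $v_Iv_J=0$, i.e. if and only if $I\cap J\neq\emptyset$; here the hypothesis $\mathrm{char}\,F\neq 2$ is essential. I would also record two facts used throughout: first, $G_0$ is central in $G(n)$, so by maximality every maximal commutative $M$ contains $G_0$ and hence splits as $G_0\oplus M_1$ with $M_1=M\cap G_1$ (this is the homogeneity already noted before the lemma); second, a commutative subalgebra is maximal if and only if it equals its own centraliser, since the subalgebra generated by $M$ together with any element commuting with $M$ is again commutative.

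For the first direction, assume $M$ is maximal commutative and set $\Delta=\nabla_M$. Commutativity of $\Delta$ is immediate from the observation above: if $I,J\in\Delta$ with $I\cap J=\emptyset$, then $v_I,v_J$ would anticommute nontrivially, contradicting $v_I,v_J\in M$. For the algebraic axiom, take $A_1$ even (so automatically $A_1\notin\Delta$) and $A_2\in\Delta$ with $A_1\cap A_2=\emptyset$; then $v_{A_1}\in G_0\subseteq M$ and $v_{A_2}\in M_1$, so $v_{A_1}v_{A_2}=\pm v_{A_1\cup A_2}\in M$ is a nonzero odd monomial, whence $A_1\cup A_2\in\nabla_M=\Delta$. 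For maximality, let $A$ be odd with $A\notin\Delta$ and suppose, for contradiction, that $A\cap B\neq\emptyset$ for every $B\in\Delta$; then $v_A$ commutes with every $v_B$ ($B\in\Delta$) and with all of $G_0$, hence with all of $M$, so $v_A$ lies in the centraliser of $M$ and therefore in $M$, forcing $A\in\Delta$, a contradiction.

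For the second direction, let $T$ be a maximal commutative algebraic system and put $N=G_0\oplus M_T$. To see $N$ is a subalgebra I would check products of basis monomials by cases: even times even and odd times odd land in $G_0$, while for $v_{A_1}$ even and $v_{A_2}$ with $A_2\in T$ the product is either $0$ or $\pm v_{A_1\cup A_2}$ with $A_1\cup A_2\in T$ --- which is exactly the algebraic axiom for $T$. Commutativity of $N$ follows because even monomials are central and any two $v_I,v_J$ with $I,J\in T$ satisfy $I\cap J\neq\emptyset$, hence $v_Iv_J=0=v_Jv_I$. The remaining point is maximality, which I would prove by showing $N$ equals its centraliser. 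Writing $y=y_0+y_1$ with $y_0\in G_0$ and $y_1\in G_1$, the element $y$ centralises $N$ if and only if $y_1$ commutes with every $v_I$ ($I\in T$); since $|I|,|J|$ are odd, \eqref{E:1} gives $[y_1,v_I]=-2\,v_Iy_1$, so (using $\mathrm{char}\,F\neq 2$) this is equivalent to $v_Iy_1=0$ for all $I\in T$. Expanding $y_1=\sum_J c_Jv_J$ over odd sets $J$, the monomials $v_{I\cup J}$ with $I\cap J=\emptyset$ are pairwise distinct, so $v_Iy_1=0$ forces $c_J=0$ whenever $I\cap J=\emptyset$. Thus $c_J\neq 0$ implies $J$ meets every member of $T$; maximality of $T$ then gives $J\in T$, so $y_1\in M_T$ and $y\in N$. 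Hence $N$ is its own centraliser and is maximal commutative.

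The step I expect to be the main obstacle is the maximality in the second direction: one must pass from the single condition that $y_1$ commutes with each odd generator to the coefficientwise vanishing statement, which requires both the characteristic $\neq 2$ reduction $[y_1,v_I]=-2v_Iy_1$ and the bookkeeping that the products $v_{I\cup J}$ are distinct basis vectors, so that no cancellation can occur. Once that is in place, the combinatorial maximality of $T$ converts ``$J$ meets every member of $T$'' into ``$J\in T$'' cleanly. The other potential pitfall --- justifying $G_0\subseteq M$ and the centraliser criterion for maximality --- is handled uniformly by the centrality of $G_0$ noted at the start.
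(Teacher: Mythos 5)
Your proof is correct, and its two halves should be judged separately. For the first half (from $M$ to $\nabla_M$) you follow essentially the same route as the paper: commutativity from the fact that two odd monomials commute if and only if their index sets meet, the algebraic axiom from $G_0\subseteq M$ together with closure under multiplication, and maximality by showing that an odd monomial $v_A$ meeting every member of $\nabla_M$ would commute with all of $M$ --- the paper phrases the contradiction as adjoining $v_A$ to get a strictly larger commutative subalgebra, you phrase it as the centraliser of $M$ being $M$ itself; these are the same argument. The genuine difference is in the second half: the paper dismisses it with ``The second statement is obvious,'' whereas you supply the actual content. Your centraliser computation --- reducing $[y,v_I]=0$ to $v_Iy_1=0$ via the identity $[y_1,v_I]=-2v_Iy_1$ (which is where $\mathrm{char}\,F\neq 2$ enters), then extracting coefficients using the linear independence of the pairwise distinct basis monomials $v_{I\cup J}$ over $J$ disjoint from $I$, and finally invoking maximality of $T$ to convert ``$J$ meets every member of $T$'' into $J\in T$ --- is exactly what is needed to rule out \emph{non-homogeneous} elements in the centraliser of $G_0\oplus M_T$, which is the non-trivial point hidden behind the paper's ``obvious.'' One small imprecision on your side: the splitting $M=G_0\oplus(M\cap G_1)$, which you correctly derive from the centrality of $G_0$ and maximality, is strictly weaker than the homogeneity assumption (being spanned by monomials) stated before the lemma. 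You use full homogeneity at the step ``$v_A$ commutes with every $v_B$ ($B\in\Delta$) and with all of $G_0$, hence with all of $M$'': without it, $M\cap G_1$ could contain elements such as $v_{\{1\}}+v_{\{2\}}$ with neither index set in $\nabla_M$, and the implication would fail. Since the paper's standing assumption that $M$ is homogeneous is in force, the step is legitimate, but it should be credited to that assumption rather than to the even--odd splitting.
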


\begin{proof}
Let $M=G_0\oplus M_1$. If $v_I, v_J\in M_1$, then $v_I$ and $v_J$ commute if and only if $I\cap J\neq \emptyset$. Hence $\nabla_M$ is commutative.

Let $S\in \mathcal{P}_n \setminus \nabla_M$, $|S|$ is odd and $S\cap S'\neq\emptyset$ for any $S'\in\nabla_M$. Then $v_S\notin M$, $v_S v_{S'} =0=v_{S'} v_S$ and hence $v_S$, $v_{S'}$ commute. So $v_S$ commutes with all elements in $M_1$ and  with all elements in $G_0$. So $M\cup \{v_S\}$ is a commutative subalgebra which contradicts the maximality of $M$. Consequently, $\nabla_M$ is maximal.

Let $v_{S''}\in M_0$. Then $v_{S''} M_1\subset M_1$ as $M$ is a subalgebra in $G(n)$. If $v_{S_0}\in M_1$, then $v_{S''}$ and $v_{S_0}$ commute by \eqref{E:1}. In particular, if $S'' \cap S_0 =\emptyset$, then $v_{S''} v_{S_0}=v_{S'' \cup S_0}\neq 0$. Hence $S''\cup S_0\in \nabla_M$,  so $\nabla_M$ is algebraic.

The second statement is obvious. \end{proof}

\begin{corollary}\label{C:1}
Finding maximal (with respect to inclusion) commutative subalgebras in $G(n)$ is equivalent to finding maximal commutative algebraic systems in $\mathcal{P}_n(*)$.
\end{corollary}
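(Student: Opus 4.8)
The plan is to read the word ``equivalent'' in the statement as the assertion that the two assignments furnished by Lemma \ref{L:1}, namely
\[
M \longmapsto \nabla_M \qquad \text{and} \qquad T \longmapsto G_0 \oplus M_T,
\]
are mutually inverse bijections between the set of maximal commutative subalgebras of $G(n)$ and the set of maximal commutative algebraic systems in $\mathcal{P}_n(*)$. Lemma \ref{L:1} already does the heavy lifting: its first part shows that $M \mapsto \nabla_M$ lands among maximal commutative algebraic systems, and its second part shows that $T \mapsto G_0 \oplus M_T$ lands among maximal commutative subalgebras. So all that remains is to check that the two maps compose to the identity in both orders.

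First I would compose starting from a maximal commutative subalgebra $M$. Because a maximal commutative subalgebra is homogeneous, we may write $M = G_0 \oplus M_1$ with $M_1$ spanned by odd monomials $v_I$; by the very definition $\nabla_M = \{I \subseteq [n] \mid v_I \in M_1\}$, so $M_{\nabla_M} = \mathrm{span}_F\{v_I \mid v_I \in M_1\} = M_1$, whence $G_0 \oplus M_{\nabla_M} = M$. Here the only ingredient beyond bookkeeping is the homogeneity of $M$, cited in the text, which guarantees that $M_1$ has a monomial basis and so is recovered exactly by $\nabla_M$.

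Next I would compose starting from a maximal commutative algebraic system $T$. Since $T$ consists of odd-size subsets, $M_T \subseteq G_1$, so the odd part of $G_0 \oplus M_T$ is precisely $M_T = \mathrm{span}_F\{v_I \mid I \in T\}$; hence $\nabla_{G_0 \oplus M_T} = \{I \subseteq [n] \mid v_I \in M_T\} = T$, using that distinct monomials $v_I$ are linearly independent. This closes the loop.

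The step I expect to require the most care is not a calculation but the conceptual one of pinning down what ``equivalent'' means and of invoking homogeneity correctly: without knowing that a maximal commutative subalgebra is spanned by monomials, the passage $M \mapsto \nabla_M \mapsto G_0 \oplus M_{\nabla_M}$ could in principle lose information. Since that fact is already recorded in the excerpt (with the reference to Theorem 4.3 of \cite{Domokos_Zubor}), the corollary follows immediately from Lemma \ref{L:1} together with the two inverse-checks above.
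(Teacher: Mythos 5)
Your proposal is correct and takes essentially the same route as the paper: the paper offers no separate argument for Corollary~\ref{C:1}, treating it as an immediate consequence of Lemma~\ref{L:1}, which is exactly the reduction you make. Your extra step of checking that $M \mapsto \nabla_M$ and $T \mapsto G_0 \oplus M_T$ are mutually inverse (via homogeneity of $M$ and linear independence of the monomials $v_I$) is sound and simply makes explicit what the paper leaves implicit.
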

If $n$ is even, there are easy constructions of maximal commutative algebraic systems in $G(n)$. We state them as examples.

\begin{example} \label{example2}
Let $n=4k$ where $k>0$. Let $\mathcal{S}\subset\mathcal{P}_n(*)$ be the following collection of subsets: \[ \mathcal{S}=\cup_{i \text{ is }odd, 2k+1\leq i \leq 4k-1} \mathcal{P}_n(i).\]It is plain to check that $\mathcal{S}$ is a  maximal commutative algebraic system  in $\mathcal{P}_n(*)$. We note that \[
|\mathcal{S}|=\textstyle \binom{4k}{2k+1}+\binom{4k}{2k+3}+\dots+\binom{4k}{4k-1} =2^{4k-2}=2^{n-2}.
\]
\end{example}
By Lemma \ref{L:1}, we obtain a maximal commutative subalgebra $G_0\oplus M_{\mathcal{S}}$ in $G(4k)$, such that
\[
\dim(G_0\oplus M_{\mathcal{S}})=2^{n-1}+2^{n-2}=3\cdot 2^{n-2}.
\]

\begin{example} \label{example3}
Let $n=4k+2$ where $k>0$. Let $\mathcal{S}_1\subset\mathcal{P}_n(*)$ be the following collection of subsets:
\[
\mathcal{S}_1=\cup_{i \text{ is }odd, 2k+3\leq i \leq 4k+1} \mathcal{P}_n(i).
\]
For any $i\in [n]$, we define $\mathcal{S}_2(i)\subset\mathcal{P}_n(*)$ be the following collection of subsets:
\[
\mathcal{S}_2(i)= \{U\in\mathcal{P}_n(2k+1) \mid  i\in U \}.
\]
Set $\mathcal{S}(i):=\mathcal{S}_1\cup \mathcal{S}_2(i)$. The set  $\mathcal{S}$ is a  maximal commutative algebraic system  in $\mathcal{P}_n(*)$.
\end{example}
Indeed
\[
\textstyle
|\mathcal{S}(i)|=|\mathcal{S}_1|+|\mathcal{S}_2(i)|=\binom{4k+1}{2k}+\binom{4k+2}{2k+3}+\dots+\binom{4k+2}{4k+1}=2^{4k}=2^{n-2}.
\]
By Lemma \ref{L:1}, we obtain  a maximal commutative subalgebra $G_0\oplus M_{\mathcal{S}(i)}$ in $G(4k+2)$ such that
\[
\dim(G_0\oplus M_{\mathcal{S}(i)})=2^{n-1}+2^{n-2}=3\cdot 2^{n-2}.
\]
The algebra $G_0\oplus M_{\mathcal{S}(x)}$ is isomorphic to $G_0\oplus M_{\mathcal{S}(y)}$ for each  $x,y\in [n]$ (by a permutation of basis elements $v_i$ with  $i\in [n]$). Hence, there exist $4k+2(=n)$ pieces of pairwise isomorphic maximal commutative subalgebras of dimension $3\cdot 2^{n-2}$ in $G(4k+2)$ which are all isomorphic to $G_0\oplus M_{S(i)}$.

Let $S\subset [n]$ with $n\geq 2$. Define
\[
\vee^1 S:=\{S,  S\cup S_1\mid S_1\in \mathcal{P}_n(2),  S_1\cap S=\emptyset\}.
\]
For $i>1$, we define inductively $\vee^i S:= \vee^1 (\vee^{i-1}S)$.

We define a {\it cone} $\mathrm{Cone}(S)$ based  on $S$ as follows:
\[
\mathrm{Cone}(S):=(\vee^\infty S)\cap [n].
\]
If $\mathcal{B}\subseteq \mathcal{P}_n(*)$, then we define  $\vee^1 \mathcal{B}:=\cup\{\vee^1 S\text{  }|\text{  }S\in\mathcal{B}\}$. We also extend our definition of $\mathrm{Cone}(\mathcal{B})$ based on $\mathcal{B}$ naturally.

\begin{example}
For $n=4$, in Example \ref{example1}, we have $\Delta=\vee^1 \{1\}$. It is also the cone based on a singleton $\{1\}$, that is, $\Delta=\mathrm{Cone}(\{1\})$.
\end{example}

\begin{example}
For $n=5$, $\Delta=\mathrm{Cone}(\{2\})$ is the set
\begin{align*}
\Delta&=\vee^2\{2\}= \vee^1\{\{2\},\{1,2,3\},\{1,2,4\},\{1,2,5\},\{2,3,4\},\{2,3,5\},\{2,4,5\} \}\\
   &= \{\{2\},\{1,2,3\},\{1,2,4\},\{1,2,5\},\{2,3,4\},\{2,3,5\},\{2,4,5\},\{1,2,3,4,5\} \}.
\end{align*}
\end{example}

\begin{lemma} \label{L:2}
The following conditions hold:
\begin{itemize}
\item[(i)] if $S\in \mathcal{P}_n(*)$, then $\mathrm{Cone}(S)$ is a commutative algebraic system;

\item[(ii)] if  $\Delta \subset \mathcal{P}_n$ is  a  maximal commutative algebraic system, then  for each  $S\in \Delta$ we have  $\mathrm{Cone}(S)\subset\Delta$;

\item[(iii)] $\mathrm{Cone}(\{i\})$ is a  maximal commutative algebraic system,    such that  $|\mathrm{Cone}(\{i\})|=2^{n-2}$.

\end{itemize}
\end{lemma}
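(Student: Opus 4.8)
The plan is to first replace the inductive definition of $\mathrm{Cone}(S)$ by the explicit description
\[
\mathrm{Cone}(S)=\{A\in\mathcal{P}_n(*)\mid S\subseteq A,\ |A|\equiv|S|\pmod{2}\},
\]
and then read off all three assertions from it. To prove this description I would argue by induction on the number of applications of $\vee^1$. Each application only enlarges a set by a disjoint pair, so every member of $\vee^i S$ contains $S$ and has size $|S|+2j$ for some $0\le j\le i$; this gives the inclusion ``$\subseteq$''. Conversely, if $S\subseteq A$ and $|A\setminus S|=2k$, I would list $A\setminus S=\{a_1,b_1,\dots,a_k,b_k\}$ and adjoin the pairs $\{a_1,b_1\},\dots,\{a_k,b_k\}$ one at a time, each being disjoint from the set accumulated so far; after $k$ steps we reach $A$, so $A\in\vee^k S\subseteq\mathrm{Cone}(S)$. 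Since $|S|$ is odd, the congruence $|A|\equiv|S|\pmod{2}$ is just ``$|A|$ is odd'', and the intersection with $[n]$ in the definition only records that we stay inside $\mathcal{P}_n$.

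Given this description, part (i) is immediate: any two members of $\mathrm{Cone}(S)$ contain $S\neq\emptyset$, hence meet, so the system is commutative; and if $A_2\in\mathrm{Cone}(S)$ and $A_1$ is an even-size set (necessarily outside $\mathrm{Cone}(S)$) disjoint from $A_2$, then $A_1\cup A_2\supseteq S$ and $|A_1\cup A_2|=|A_1|+|A_2|$ is odd, so $A_1\cup A_2\in\mathrm{Cone}(S)$, which is exactly the defining closure property of an algebraic system. For part (ii) I would induct on $|A\setminus S|$ for $A\in\mathrm{Cone}(S)$. The base case $A=S$ holds because $S\in\Delta$. For the inductive step, pick any two-element subset $P\subseteq A\setminus S$ and set $A_2:=A\setminus P$; then $A_2\in\mathrm{Cone}(S)$ with $|A_2\setminus S|$ strictly smaller, so $A_2\in\Delta$ by the inductive hypothesis, while $P$ has even size and is therefore not in $\Delta$. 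Applying the algebraic-system property of $\Delta$ to $A_1=P$ and $A_2$ yields $A=P\cup A_2\in\Delta$. Note that this step uses only that $\Delta$ is an algebraic system containing $S$, not its maximality or commutativity.

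Finally, for part (iii) take $S=\{i\}$, whose odd size makes $\mathrm{Cone}(\{i\})$ a commutative algebraic system by (i). Its members are precisely the odd-size subsets of $[n]$ that contain $i$, so a set $A$ of odd size fails to lie in $\mathrm{Cone}(\{i\})$ exactly when $i\notin A$; in that case $B=\{i\}\in\mathrm{Cone}(\{i\})$ satisfies $A\cap B=\emptyset$, which is maximality. For the cardinality I would use the parity-flipping bijection $C\mapsto C\setminus\{i\}$ between the odd-size subsets of $[n]$ containing $i$ and the even-size subsets of the $(n-1)$-element set $[n]\setminus\{i\}$; since an $(n-1)$-set with $n\ge2$ has $2^{n-2}$ even-size subsets, we obtain $|\mathrm{Cone}(\{i\})|=2^{n-2}$. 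The only genuinely delicate point in the whole argument is justifying the explicit description of $\mathrm{Cone}(S)$ from the iterated definition — in particular checking that every odd-size superset of $S$ is reachable by adjoining disjoint pairs — after which parts (i)--(iii) are short.
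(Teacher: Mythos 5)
Your proof is correct, and it is organized genuinely differently from the paper's. The paper dismisses (i) and (ii) as obvious and, for (iii), argues commutativity from the common element $i$, proves maximality by passing to the complement $S_0=[n]\setminus S$ of a missing odd set $S$ (deleting a point $j\neq i$ when $|S_0|$ is even), and computes the cardinality by decomposing $\mathrm{Cone}(\{i\})$ into levels $\mathcal{U}_j$ with $|\mathcal{U}_j|=\binom{n-1}{j-1}$ and summing the even binomial coefficients via $(1+x)^{n-1}$ at $x=\pm 1$. You instead isolate and prove the explicit characterization $\mathrm{Cone}(S)=\{A\mid S\subseteq A,\ |A|\equiv|S|\pmod 2\}$, which the paper uses only tacitly at several points (its claim that an odd-size $S\notin\mathrm{Cone}(\{i\})$ must avoid $i$, its claim that $S_0\in\mathrm{Cone}(\{i\})$ when $|S_0|$ is odd, and the count $|\mathcal{U}_j|=\binom{n-1}{j-1}$ all rest on it); your induction splitting $A\setminus S$ into disjoint pairs supplies this cleanly, and the same pair-adjoining induction gives (ii), which the paper leaves unproved. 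Two of your details are in fact simpler than the paper's: for maximality you use the witness $B=\{i\}$ itself rather than the complement construction, and you count by the parity-flipping bijection $C\mapsto C\setminus\{i\}$ onto the even-size subsets of an $(n-1)$-set rather than by a binomial identity. One caveat, inherited from the paper's statement rather than a flaw in your argument: part (i) as stated allows $|S|$ even, in which case $\mathrm{Cone}(S)$ consists of even-size sets and cannot be an algebraic system under the paper's definition, so your implicit assumption that $|S|$ is odd is the intended reading and would be worth stating explicitly.
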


\begin{proof}
(i) and (ii) are obvious.

$\mathrm{Cone}(\{i\})$ is algebraic by part (i). It is commutative since $\{i\}$ is a common subset for any element in $\mathrm{Cone}(\{i\})$ .

Let $S\in \mathcal{P}_n(*) \setminus\mathrm{Cone}(\{i\})$ such that $|S|$ is odd. Then $S\cap\{i\}=\emptyset$. Let $S_0$ be the complement of $S$ in $[n]$. Since $\{i\}\subset S_0$, $S_0\in \mathrm{Cone}(\{i\})$ if $|S_0|$ is odd. If $|S_0|$ is even, then let $S_1=S_0\setminus \{j\}$ where $i\neq j$. $S_1\in \mathrm{Cone}(\{i\})$ and $S_1\cap S=\emptyset$. Hence $\mathrm{Cone}(\{i\})$ is maximal.

Decompose $\mathrm{Cone}(\{i\})$ into the union of $\mathcal{U}_i$,  where $\mathcal{U}_i\subset \mathcal{P}_n(i)$. The set $\mathcal{U}_i$ is non-empty if and only if $i$ is odd. Clearly $\textstyle|\mathcal{U}_i|=\binom{n-1}{i-1}$, so
\[
\textstyle
|\mathrm{Cone}(\{i\})| =1 +\binom{n-1}{2}+\binom{n-1}{4}+\cdots+\binom{n-1}{2i'},
\]
where $2i'$ is the largest even number less than or equal to $n-1$. In the polynomial expansion of $(1+x)^{n-1}$, put $x=1$ and we get the classical identity on binomial distribution:
\[
\textstyle
2^{n-1}=\binom{n-1}{0} + \binom{n-1}{1}+\cdots+\binom{n-1}{n-1}.
\]
If we put $x=-1$ into the expansion of $(1+x)^{n-1}$ and rearrange terms, we  get
\[
\textstyle
|\mathrm{Cone}(\{i\})|=\frac{1}{2}\Big(\binom{n-1}{0} + \binom{n-1}{1}+\cdots+\binom{n-1}{n-1}\Big)
\]
and hence the result.
\end{proof}

Based on Lemmas \ref{L:1} and  \ref{L:2}, we construct a maximal commutative subalgebra $G_0 \oplus M_{\mathrm{Cone}(\{i\})}$ in $G(n)$ based on each $i\in[n]$  in $\mathrm{Cone}(\{i\})$. Now it is easy to see that
\[
\dim(G_0 \oplus M_{\mathrm{Cone}(\{i\})})=2^{n-1}+2^{n-2}=3\cdot 2^{n-2}.
\]
 Hence, we state the following corollary.

\begin{corollary}\label{C:2}
For each  positive integer $n$, there always exists $n$ pieces of pairwise  isomorphic maximal commutative subalgebras of dimension $3\cdot 2^{n-2}$ in  $G(n)$ which are all isomorphic to $G_0 \oplus M_{\mathrm{Cone}(\{i\})}$.
\end{corollary}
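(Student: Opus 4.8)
The plan is to deduce the statement almost entirely from the two preceding lemmas, the only genuinely new ingredient being the pairwise isomorphism, which I would extract from the permutation symmetry of the index set $[n]$. First I would record that for each $i\in[n]$ the collection $\mathrm{Cone}(\{i\})$ is a maximal commutative algebraic system by Lemma \ref{L:2}(iii), so that $G_0\oplus M_{\mathrm{Cone}(\{i\})}$ is indeed a maximal commutative subalgebra of $G(n)$ by the second assertion of Lemma \ref{L:1}. The dimension is then immediate: every element of $\mathrm{Cone}(\{i\})$ has odd size, whence $M_{\mathrm{Cone}(\{i\})}\subseteq G_1$ and the sum $G_0\oplus M_{\mathrm{Cone}(\{i\})}$ is direct; since $\dim_F G_0=2^{n-1}$ (the number of even-size subsets of $[n]$) and $\dim_F M_{\mathrm{Cone}(\{i\})}=|\mathrm{Cone}(\{i\})|=2^{n-2}$ by Lemma \ref{L:2}(iii), we obtain $\dim_F(G_0\oplus M_{\mathrm{Cone}(\{i\})})=2^{n-1}+2^{n-2}=3\cdot 2^{n-2}$.

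Next I would argue that these $n$ subalgebras are genuinely distinct and pairwise isomorphic, using the action of the symmetric group $S_n$ on $[n]$. For distinctness, observe that $\{i\}\in\mathrm{Cone}(\{i\})$ but $\{i\}\notin\mathrm{Cone}(\{j\})$ whenever $j\neq i$, because every set in $\mathrm{Cone}(\{j\})$ contains $j$; hence the basis element $v_{\{i\}}$ lies in $G_0\oplus M_{\mathrm{Cone}(\{i\})}$ but not in $G_0\oplus M_{\mathrm{Cone}(\{j\})}$, separating the $n$ subalgebras. For the isomorphism, fix $i,j\in[n]$ and choose any permutation $\sigma\in S_n$ with $\sigma(i)=j$, for instance the transposition $(i\ j)$. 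The assignment $v_k\mapsto v_{\sigma(k)}$ preserves the defining relations $v_kv_\ell=-v_\ell v_k$, so it extends uniquely to an $F$-algebra automorphism $\phi_\sigma$ of $G(n)$ satisfying $\phi_\sigma(v_I)=\pm v_{\sigma(I)}$ for every $I\subseteq[n]$.

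The key step, and the only place where I expect any care to be needed, is to verify that $\phi_\sigma$ carries $G_0\oplus M_{\mathrm{Cone}(\{i\})}$ onto $G_0\oplus M_{\mathrm{Cone}(\{j\})}$. Since $|\sigma(I)|=|I|$, the automorphism $\phi_\sigma$ maps $G_0$ onto $G_0$, so it suffices to establish the equivariance $\sigma(\mathrm{Cone}(\{i\}))=\mathrm{Cone}(\{\sigma(i)\})=\mathrm{Cone}(\{j\})$. This follows from the purely combinatorial nature of the cone: the operation $\vee^1$ adjoins disjoint pairs $S_1\in\mathcal{P}_n(2)$, and relabeling by $\sigma$ manifestly commutes with forming disjoint unions, so by induction $\sigma(\vee^m S)=\vee^m(\sigma(S))$ for all $m$, and the subsequent intersection defining $\mathrm{Cone}$ is likewise $\sigma$-invariant. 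Consequently $\phi_\sigma(M_{\mathrm{Cone}(\{i\})})=M_{\mathrm{Cone}(\{j\})}$ (the signs being irrelevant at the level of spans), and $\phi_\sigma$ restricts to an $F$-algebra isomorphism from $G_0\oplus M_{\mathrm{Cone}(\{i\})}$ onto $G_0\oplus M_{\mathrm{Cone}(\{j\})}$. As $i,j$ were arbitrary, all $n$ subalgebras are pairwise isomorphic, which completes the argument.
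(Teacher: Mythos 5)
Your proposal is correct and follows essentially the same route as the paper: Lemma \ref{L:2}(iii) plus Lemma \ref{L:1} for maximality, the count $2^{n-1}+2^{n-2}=3\cdot 2^{n-2}$ for the dimension, and a permutation of the generators $v_i$ (which the paper invokes only implicitly, as in Example \ref{example3}) for the pairwise isomorphisms. You merely spell out details the paper leaves unstated, such as the distinctness of the $n$ subalgebras and the equivariance $\sigma(\mathrm{Cone}(\{i\}))=\mathrm{Cone}(\{\sigma(i)\})$, which is a welcome tightening rather than a different argument.
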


\begin{remark}
Let $i\in [n]$ where $n=4k$ with  $k>0$.  It is clear that the maximal commutative subalgebra $G_0\oplus M_{\mathcal{S}}$ (see Example \ref{example2}) and $G_0\oplus M_{\mathrm{Cone}(\{i\})}$ (see Lemma \ref{L:2}(iii)) are not isomorphic in $G(n)$ despite the fact that both of them have the same dimension $3\cdot 2^{n-2}$.
\end{remark}

\begin{remark}
Let $i\in [n]$, where  $n=4k+2$ with $k>0$.  It is clear that the maximal commutative subalgebra $G_0\oplus M_{S(i)}$ (see  Example \ref{example3}) and $G_0\oplus M_{\mathrm{Cone}(\{i\})}$ (see Lemma \ref{L:2}(iii)) are not isomorphic in $G(n)$ despite the fact that both of them have the same dimension $3\cdot 2^{n-2}$.
\end{remark}

\section{An intersecting family based on Fano System}\label{section2}

\noindent
{\underline {\it The Fano System}} (or {\it $0$-Fano System}) is the following collection $\mathcal{A}_3$ of subsets of the set $[7]$:
$A_1=\{1,2,5\}$, $A_2=\{1,3,6\}$, $A_3=\{1,4,7\}$, $A_4=\{2,3,7\}$,   $A_5=\{3,4,5\}$, $A_6=\{5,6,7\}$ and  $A_7=\{2,4,6\}$.
\noindent
{\underline {\it The extended Fano System}} (or {\it $4k$-Fano System}) is the collection of the following subsets of $[4k+7]$:
\begin{equation}\label{E:Qlb}
\begin{split}
\textstyle
\big\{\;  \{4k+1,4k+2,4k+5\}, \{4k+1,4k+3,4k+6\},\\
\{4k+1,4k+4,4k+7\}, \{4k+2,4k+3,4k+7\}, \\
\{4k+3,4k+4,4k+5\}, \{4k+5,4k+6,4k+7\},\\
 \{4k+2,4k+4,4k+6\}\; \big\}, \qquad (k\geq 1).
\end{split}
\end{equation}
It is easy to see that if $k=0$ then  we have the $0$-Fano system. The properties of the Fano system can be found in \cite{Polster, Lint_Wilson}.

The \underline{{\it $s$-bicommutative system}} is a collection $T$ of subsets of $\mathcal{P}_n(s)$ such that:
\begin{itemize}
\item[(i)] $T$ is  commutative (that is, any two elements in $T$ have a non-trivial intersection);
\item[(ii)] $T$ is  maximal among subsets of size $s$, i.e. if $S_0\not\in T$ and $|S_0|=s$, then there exists $S_1\in T$, such that $S_0\cap S_1 = \emptyset$.
\end{itemize}

Note that in general an $s$-bicommutative system can not be an algebraic system, because all elements of an $s$-bicommutative system have the same size.

\begin{lemma} \label{L:3}
The Fano System $\mathcal{A}_3$ is a $3$-bicommutative system in $\mathcal{P}_7(3)$.
\end{lemma}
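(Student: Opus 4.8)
The plan is to recognize $\mathcal{A}_3=\{A_1,\dots,A_7\}$ as the seven lines of the Fano plane $PG(2,2)$ on the point set $[7]$, and to verify the two defining conditions of a $3$-bicommutative system separately. For commutativity (condition (i)), I would invoke the basic incidence property of the Fano plane: any two distinct lines meet in exactly one point. This can be read off directly from the given list of the $A_i$ (there are only $\binom{7}{2}=21$ pairs to inspect) or quoted from \cite{Polster, Lint_Wilson}. In particular $A_i\cap A_j\neq\emptyset$ for all $i\neq j$, which is exactly condition (i).

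For maximality (condition (ii)), the goal is to show that every $3$-subset $S\subseteq[7]$ with $S\notin\mathcal{A}_3$ — that is, every non-collinear triple — is disjoint from at least one line $A_i$. Rather than checking each of the $\binom{7}{3}-7=28$ such triples by hand, I would run a double-counting argument on the incidence set
\[
\mathcal{I}=\{\,(A_i,S)\mid |S|=3,\ S\notin\mathcal{A}_3,\ A_i\cap S=\emptyset\,\}.
\]
Counting by lines first: fix a line $A_i$; its complement $\overline{A_i}$ in $[7]$ has four points, and $\overline{A_i}$ contains no line, since a line inside $\overline{A_i}$ would be a second line disjoint from $A_i$, contradicting the meet-in-one-point property. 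Hence all $\binom{4}{3}=4$ three-subsets of $\overline{A_i}$ are non-collinear triples disjoint from $A_i$, giving $|\mathcal{I}|=7\cdot 4=28$.

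Counting by triples second: for a fixed non-collinear triple $S$, no two distinct lines can both be disjoint from $S$, because two disjoint lines $\ell_1\neq\ell_2$ would satisfy $|\ell_1\cup\ell_2|=3+3-1=5$ (again by the meet-in-one-point property), and these five points would have to lie in the four-point complement $\overline{S}$, which is impossible. Thus each triple contributes at most one pair to $\mathcal{I}$. Since there are exactly $28$ non-collinear triples and $|\mathcal{I}|=28$ with each triple contributing at most one, every non-collinear triple must contribute exactly one; i.e. it is disjoint from a (unique) line of $\mathcal{A}_3$, which is precisely condition (ii).

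The only real content is the single incidence axiom of the Fano plane — that two distinct lines meet in exactly one point — which drives both the computation $|\mathcal{I}|=28$ and the at-most-one bound; everything else is bookkeeping. I expect the main obstacle to be purely presentational: arranging the two halves of the count so that the equality $28=28$ forces \emph{exactly} one rather than merely \emph{at least} one. A more pedestrian alternative would be to list, for each of the seven lines, the four triples in its complement, and observe that these $28$ triples exhaust all non-collinear triples; this trades the counting step for an explicit enumeration but arrives at the same conclusion.
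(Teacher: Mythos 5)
Your proof is correct, but it handles the maximality condition by a genuinely different route than the paper. The paper's proof is pure enumeration on both counts: it lists all pairwise intersections $A_i\cap A_j$ (verifying each is a singleton, which gives condition (i)), and then lists all $28$ non-Fano triples explicitly, grouped into seven batches of four, exhibiting for each batch a line disjoint from it. Your condition (i) rests on the same check (or a citation for it), but for condition (ii) you replace the enumeration with a double count of the incidence set $\mathcal{I}$: each line is disjoint from exactly the $\binom{4}{3}=4$ triples in its four-point complement (none of which can be a line, since distinct lines meet), giving $|\mathcal{I}|=28$; and no triple can be disjoint from two lines, since two distinct lines cover $3+3-1=5$ points, too many for the four-point complement of a triple. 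Equality $28=\sum_S(\text{lines disjoint from }S)\le 28\cdot 1$ then forces every non-collinear triple to be disjoint from exactly one line, which is stronger than the ``at least one'' that maximality requires. The trade-off is clear: the paper's version is self-contained brute force that a reader can verify line by line with no structural input, while yours isolates the single geometric axiom doing all the work (two lines meet in one point) and would survive relabelling of the Fano plane or generalization to other designs with the same pairwise-intersection property; the ``pedestrian alternative'' you mention at the end is, in fact, exactly what the paper does.
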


\begin{proof}
Clearly  $A_i \cap A_j\neq \emptyset$, where  $i,j\in[7]$, because
\[
\begin{split}
A_1&\cap A_2=A_1\cap A_3=A_2\cap A_3=\{1\},\quad A_1\cap A_4=A_1 \cap A_7=A_4\cap A_7=\{2\},\\
A_2&\cap A_4=A_2\cap A_5=A_4\cap A_5=\{3\},\quad A_3\cap A_5=A_3\cap A_7=A_5\cap A_7=\{4\},\\
A_1&\cap A_5=A_1\cap A_6=A_5 \cap A_6=\{5\},\quad A_2\cap A_6=A_2\cap A_7=A_6\cap A_7=\{6\},\\
A_3&\cap A_4=A_3 \cap A_6=A_4\cap A_6=\{7\}.
\end{split}
\]
The  system $\mathcal{A}_3$ is maximal, i.e.  for any $S\in \mathcal{P}_7(3) \setminus \mathcal{A}_3$,  there exists $A_i\in\mathcal{A}_3$ such that $A_i \cap S=\emptyset$.  Indeed, case-by-case we have:

\begin{align*}
\{1,2,3\}\cap A_6&=\{1,2,4\}\cap A_6=\{1,3,4\}\cap A_6=\{2,3,4\}\cap A_6=\emptyset,\\
\{2,3,5\}\cap A_3&=\{2,3,6\}\cap A_3=\{2,5,6\}\cap A_3=\{3,5,6\}\cap A_3=\emptyset,\\
\{3,4,6\}\cap A_1&=\{3,4,7\}\cap A_1=\{3,6,7\}\cap A_1=\{4,6,7\}\cap A_1=\emptyset,\\
\{1,2,6\}\cap A_5&=\{1,2,7\}\cap A_5=\{1,6,7\}\cap A_5=\{2,6,7\}\cap A_5=\emptyset,\\
\{1,4,5\}\cap A_4&=\{1,4,6\}\cap A_4=\{1,5,6\}\cap A_4=\{4,5,6\}\cap A_4=\emptyset,\\
\{2,4,5\}\cap A_2&=\{2,4,7\}\cap A_2=\{2,5,7\}\cap A_2=\{4,5,7\}\cap A_2=\emptyset,\\
\{1,3,5\}\cap A_7&=\{1,3,7\}\cap A_7=\{1,5,7\}\cap A_7=\{3,5,7\}\cap A_7=\emptyset.
\end{align*}
\end{proof}

Now, let $n=4k+7$ and $m=2k+3$,  where $k\geq 2$. Put $\mathcal{S}'_7=[4k+1,4k+7]$.

Clearly $[n]=[4k+7]=\mathcal{S}'_7\cup{[4k]}$. Define $\mathcal{B}_i \subset \mathcal{P}_n(m)$  $(i\in[10])$ as follows:
\begin{align*}
\mathcal{B}_1 &= \{S\text{    }|\text{    }S=\{4k+1,4k+2,4k+5\}\cup S' \text{ such that } S'\subset{[4k]}\},\\
\mathcal{B}_2 &= \{S\text{    }|\text{    }S=\{4k+1,4k+3,4k+6\}\cup S' \text{ such that } S'\subset{[4k]}\},\\
\mathcal{B}_3 &= \{S\text{    }|\text{    }S=\{4k+1,4k+4,4k+7\}\cup S' \text{ such that } S'\subset{[4k]}\},\\
\mathcal{B}_4 &= \{S\text{    }|\text{    }S=\{4k+2,4k+3,4k+7\}\cup S' \text{ such that } S'\subset{[4k]}\},\\
\mathcal{B}_5 &= \{S\text{    }|\text{    }S=\{4k+3,4k+ 4,4k+5\}\cup S' \text{ such that } S'\subset{[4k]}\},\\
\mathcal{B}_6 &= \{S\text{    }|\text{    }S=\{4k+5,4k+6,4k+7\}\cup S' \text{ such that } S'\subset{[4k]}\},\\
\mathcal{B}_7 &= \{S\text{    }|\text{    }S=\{4k+2,4k+4,4k+6\}\cup S' \text{ such that } S'\subset{[4k]}\},\\
\mathcal{B}_8 &= \{S\text{    }|\text{    }S=S' \cup S'', |S'|=2, |S''|=2k+1, S'\subset\mathcal{S}'_7, S''\subset{[4k]}\},\\
\mathcal{B}_9 &= \{S\text{    }|\text{    }S=S' \cup S'', |S'|=1, |S''|=2k+2, S'\subset\mathcal{S}'_7, S''\subset{[4k]}\},\\
\mathcal{B}_{10} &= \{S\text{    }|\text{    }S\subset{[4k]}\}.
\end{align*}

We note the following intersecting property for elements in $\mathcal{B}_1,\ldots ,\mathcal{B}_7$.

\begin{lemma}\label{L:4}
If $S_1\in\mathcal{B}_i$ and $S_2\in\mathcal{B}_j$ $(i,j\in[7], i\not=j)$, then $S_1\cap S_2\neq \emptyset$.
\end{lemma}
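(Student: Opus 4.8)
The plan is to exploit the fact that each $\mathcal{B}_i$ with $i\in[7]$ is built from a single triple of the extended Fano system together with a subset of $[4k]$, so that the intersection $S_1\cap S_2$ splits cleanly into two disjoint pieces. First I would write $S_1=A_i'\cup S_1''$ and $S_2=A_j'\cup S_2''$, where $A_i',A_j'\subset\mathcal{S}'_7=[4k+1,4k+7]$ are the respective Fano triples listed in \eqref{E:Qlb} and $S_1'',S_2''\subset[4k]$. Since $\mathcal{S}'_7$ and $[4k]$ are disjoint blocks of $[n]$, we have
\[
S_1\cap S_2=(A_i'\cap A_j')\cup(S_1''\cap S_2''),
\]
so it suffices to show that the first term is already nonempty.

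For this I would observe that the seven triples appearing in \eqref{E:Qlb} are precisely the translates by $4k$ of the triples $A_1,\dots,A_7$ of the $0$-Fano system on $[7]$; that is, the triple used to define $\mathcal{B}_i$ equals $A_i+4k$, which one checks by directly comparing the definitions of $\mathcal{B}_1,\dots,\mathcal{B}_7$ with \eqref{E:Qlb}. Consequently the entire intersection table computed in the proof of Lemma \ref{L:3} transfers verbatim under the shift $t\mapsto t+4k$, and in particular $A_i'\cap A_j'=(A_i\cap A_j)+4k$ for all $i,j\in[7]$. Since $\mathcal{A}_3$ is commutative by Lemma \ref{L:3}, we have $A_i\cap A_j\neq\emptyset$ whenever $i\neq j$, and therefore $A_i'\cap A_j'\neq\emptyset$ as well.

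Combining the two steps gives $S_1\cap S_2\supseteq A_i'\cap A_j'\neq\emptyset$, which is the desired conclusion. There is essentially no hard step here: once the intersection is decomposed along the partition $[n]=\mathcal{S}'_7\cup[4k]$, the only content is the bicommutativity of the Fano system already established in Lemma \ref{L:3}, and the subsets $S_1'',S_2''\subset[4k]$ play no role (they may well be disjoint). The single point requiring care is the bookkeeping identification of the triples of $\mathcal{B}_1,\dots,\mathcal{B}_7$ with $A_1+4k,\dots,A_7+4k$; once that is in place the argument is immediate.
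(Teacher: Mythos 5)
Your proof is correct and follows essentially the same route as the paper: both arguments reduce the claim to the pairwise-intersection property of the Fano system (Lemma \ref{L:3}), transferred to the triples of \eqref{E:Qlb} via the obvious bijection (the shift by $4k$) between $[7]$ and $\mathcal{S}'_7$. Your version merely spells out the bookkeeping (the decomposition of $S_1\cap S_2$ along the partition $[n]=\mathcal{S}'_7\cup[4k]$ and the identification of each $\mathcal{B}_i$-triple with $A_i+4k$) that the paper's proof leaves implicit.
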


\begin{proof}
We randomly pick exactly one element in each $\mathcal{B}_i$ and label them as $S_i$ respectively. Note that $\{S_1\cap\mathcal{S}'_7,\ldots ,S_7\cap\mathcal{S}'_7\}$ coincides with \eqref{E:Qlb} and it  is bijective to the Fano System $\mathcal{A}_3$ and hence Lemma \ref{L:3} applies to $\{S_1\cap\mathcal{S}'_7,\ldots ,S_7\cap\mathcal{S}'_7\}$ as a collection of subsets in $\mathcal{S}'_7$. Consequently  $S_i$ and $S_j$, $i\neq j$, must intersect non-trivially in $\mathcal{S}'_7\subset[n]$.
\end{proof}

\begin{theorem} \label{T:1}
Let $n=4k+7$ and $m=2k+3$, where  $k\geq 2$. Then
\[
\mathcal{A}_m:=\cup_{i=1}^{10} \mathcal{B}_i
\]
is an  $m$-bicommutative system in $\mathcal{P}_n(m)$.
\end{theorem}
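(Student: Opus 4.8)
The plan is to verify the two defining conditions of an $m$-bicommutative system for $\mathcal{A}_m=\cup_{i=1}^{10}\mathcal{B}_i$: first that every element of $\mathcal{A}_m$ has size exactly $m=2k+3$, second commutativity (any two elements intersect nontrivially), and third maximality (any $S\in\mathcal{P}_n(m)\setminus\mathcal{A}_m$ is disjoint from some element of $\mathcal{A}_m$). The size check is a routine bookkeeping step: for $\mathcal{B}_1,\dots,\mathcal{B}_7$ each set is a $3$-element subset of $\mathcal{S}'_7$ adjoined with an $S'\subset[4k]$, so to land in $\mathcal{P}_n(m)$ one needs $|S'|=2k$; for $\mathcal{B}_8$ we have $2+(2k+1)=2k+3$, for $\mathcal{B}_9$ we have $1+(2k+2)=2k+3$, and for $\mathcal{B}_{10}$ we need $S\subset[4k]$ of size $2k+3$ (which is possible precisely because $k\geq 2$ forces $4k\geq 2k+3$). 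I would note these constraints explicitly at the outset so the cardinality requirement is cleanly satisfied.

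The heart of the proof is commutativity, which I would organize by the partition into the three blocks: the ``Fano block'' $\mathcal{B}_1,\dots,\mathcal{B}_7$, the ``small-trace'' block $\mathcal{B}_8,\mathcal{B}_9$, and the ``interior'' block $\mathcal{B}_{10}$. Within the Fano block, Lemma~\ref{L:4} already gives that two sets from distinct $\mathcal{B}_i,\mathcal{B}_j$ ($i\neq j$, $i,j\in[7]$) meet inside $\mathcal{S}'_7$; two sets from the same $\mathcal{B}_i$ share the same fixed triple and so meet trivially. The key quantitative observation is that every element of $\mathcal{B}_1,\dots,\mathcal{B}_9$ carries a large chunk of $[4k]$: a $\mathcal{B}_i$-set ($i\leq 7$) contains a $2k$-subset of $[4k]$, a $\mathcal{B}_8$-set contains a $(2k+1)$-subset, a $\mathcal{B}_9$-set a $(2k+2)$-subset, and a $\mathcal{B}_{10}$-set a $(2k+3)$-subset of $[4k]$. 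For any two sets whose traces on $[4k]$ have sizes $a$ and $b$ with $a+b>4k$, the intersection in $[4k]$ is forced to be nonempty by inclusion--exclusion. I would check that the only pairings for which $a+b\leq 4k$ fail to arise or are otherwise handled: the worst case is two $\mathcal{B}_8$-sets giving $a+b=2(2k+1)=4k+2>4k$, so even there the $[4k]$-parts must overlap; the same holds for any pair drawn from $\mathcal{B}_8,\dots,\mathcal{B}_{10}$ (minimum trace $2k+1$). Thus commutativity between blocks two and three, and within them, follows from this counting alone, while commutativity inside the Fano block uses the combinatorics of $\mathcal{A}_3$.

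For maximality, I would take an arbitrary $S\in\mathcal{P}_n(m)\setminus\mathcal{A}_m$ and produce an element of $\mathcal{A}_m$ disjoint from it. Write $S=(S\cap\mathcal{S}'_7)\cup(S\cap[4k])$ and split on $t:=|S\cap\mathcal{S}'_7|\in\{0,1,\dots,7\}$. If $t=0$ then $S\subset[4k]$, so $S\in\mathcal{B}_{10}\subset\mathcal{A}_m$, contradicting $S\notin\mathcal{A}_m$; hence $t\geq 1$. If $t\geq 3$, then $S\cap\mathcal{S}'_7$ contains or relates to a triple, and one uses the maximality half of Lemma~\ref{L:3} (the explicit case-by-case disjointness table) to find a Fano triple $A_i$ disjoint from $S\cap\mathcal{S}'_7$; pairing $A_i$ with a $2k$-subset of $[4k]$ avoiding $S\cap[4k]$ (available because $|S\cap[4k]|\leq m=2k+3$ and we may need room inside $[4k]$) yields a $\mathcal{B}_i$-element disjoint from $S$. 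The cases $t=1$ and $t=2$ are where $S$ is expected to already lie in $\mathcal{B}_9$ or $\mathcal{B}_8$ respectively unless its $[4k]$-trace has the wrong size, and here I would construct a disjoint witness from $\mathcal{B}_{10}$ or from the Fano block. The main obstacle will be the bookkeeping in these low-$t$ cases: I must confirm that after removing up to $2k+3$ forbidden points of $[4k]$ there is still enough room in the $4k$-element set $[4k]$ to build the required complementary subset, which is exactly where the hypothesis $k\geq 2$ is consumed; keeping the size inequalities tight and the Fano disjointness table correctly invoked is the delicate part of the argument.
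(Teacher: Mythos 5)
Your size bookkeeping and your commutativity argument are sound and essentially identical to the paper's: pairs inside a single $\mathcal{B}_i$ ($i\in[7]$) share the fixed Fano triple, pairs from distinct Fano blocks are handled by Lemma~\ref{L:4}, and the trace-size count on $[4k]$ (traces of size $2k$, $2k+1$, $2k+2$, $2k+3$, any two of which sum to more than $4k$ except for a Fano--Fano pairing) disposes of every remaining pair. The genuine gap is in the maximality half, in your treatment of $t:=|S\cap\mathcal{S}'_7|\geq 3$. You propose, for \emph{every} such $S$, to invoke the maximality part of Lemma~\ref{L:3} to find a Fano triple disjoint from $S\cap\mathcal{S}'_7$ and then pad it with a $2k$-subset of $[4k]$. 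But Lemma~\ref{L:3} only supplies a disjoint line for a $3$-element subset of $\mathcal{S}'_7$ that is not itself a line; it says nothing for $t\geq 4$, and in fact no disjoint line exists there in general. For $t\geq 5$ a disjoint line would have to lie inside $\mathcal{S}'_7\setminus S'$, which has at most $2$ points --- impossible. For $t=4$ a disjoint line exists only when the $3$-point complement happens to be one of the seven lines; for the other $28$ four-point traces, e.g. $S'=\{4k+1,4k+2,4k+5,4k+3\}$, which contains the line $\{4k+1,4k+2,4k+5\}$ and therefore meets every line of the Fano system, there is none. So your construction fails for essentially all $S$ with $t\in\{4,5,6,7\}$.

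The missing idea (and what the paper does) is that for large $t$ the disjoint witness must be drawn from $\mathcal{B}_8,\mathcal{B}_9,\mathcal{B}_{10}$ rather than from the Fano block: for $t=7$ or $t=6$ one has $|S''|\leq 2k-3$, so a $(2k+3)$-subset of $[4k]\setminus S''$ gives an element of $\mathcal{B}_{10}$ disjoint from $S$; for $t=5$ take one point of $\mathcal{S}'_7\setminus S'$ together with $[4k]\setminus S''$ (size $2k+2$), an element of $\mathcal{B}_9$; for $t=4$ take two points of $\mathcal{S}'_7\setminus S'$ together with $[4k]\setminus S''$ (size $2k+1$), an element of $\mathcal{B}_8$; only for $t=3$ --- where $S'$ is a non-line triple precisely because $S\notin\mathcal{A}_m$ --- does Lemma~\ref{L:3} apply, giving a line $F$ with $F\cap S'=\emptyset$, and then $F\cup([4k]\setminus S'')$ is the required element of $\cup_{i=1}^{7}\mathcal{B}_i$. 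A smaller point: your hedging in the cases $t=1,2$ (``unless its $[4k]$-trace has the wrong size'') is unnecessary, since $|S|=2k+3$ forces $|S\cap[4k]|=2k+3-t$; thus $t\leq 2$ puts $S$ outright into $\mathcal{B}_{10}$, $\mathcal{B}_9$ or $\mathcal{B}_8$, contradicting $S\notin\mathcal{A}_m$, so no witness is needed there (nor could one exist, by the commutativity already proved).
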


\begin{proof}
We  show that any two elements in $\mathcal{A}_m$ intersect non-trivially.

It is clear that $S_1\cap S_2\neq \emptyset$ in $\mathcal{S}'_7 \subset [n]$ for  $S_1, S_2\in\mathcal{B}_j$ with  $j\in [7]$. If $S_1\in\mathcal{B}_i$,  $S_2\in\mathcal{B}_j$  and $i\neq j$ (with  $i,j\in [7]$), then $S_1$ and $S_2$ have a non-trivial intersection by Lemma \ref{L:4}.

If $S_1\in\mathcal{B}_i$ and $S_2\in\mathcal{B}_j$ with  $i,j\in[8,10]$, then
\[
|S_1\cap {[4k]}|+|S_2\cap{[4k]}|>4k.
\]
Sets $S_1$ and $S_2$ have a non-empty intersection in ${[4k]}\subset[n]$ by Pigeonhole Principle.

Similarly, if $S_1\in\mathcal{B}_i$ and $S_2\in\mathcal{B}_j$ where $i\in[7]$ and $j\in[8,10]$, then
\[
|S_1\cap{[4k]}|+|S_2\cap{[4k]}|=2k+|S_2\cap{[4k]}|>4k.
\]
The sets $S_1$ and $S_2$ have a  non-empty intersection in ${[4k]}\subset[n]$ by Pigeonhole Principle.

Let us prove that the  commutative system $\mathcal{A}_m$ is maximal.
If $S\in\mathcal{P}_n(m) \setminus \mathcal{A}_m$, then we can write $S=S'\cup S''$ where $S'\subset \mathcal{S}'_7$ and $S''\subset{[4k]}$.

Note that if $|S'|=2$, then $S\in\mathcal{B}_8\subset\mathcal{A}_m$ and it is a contradiction. Similarly, if $|S'|=1$, then $S\in\mathcal{B}_9\subset\mathcal{A}_m$ and it is a contradiction. If $|S'|=0$, then $S=S''\subset{[4k]}$ and hence $S\in\mathcal{B}_{10}\subset\mathcal{A}_m$, which is a contradiction. We need to consider $S$ such that $|S'|>2$. We need to show that there exists an element in $\mathcal{A}_m$ such that it has no non-trivial intersection with $S$.

If $|S'|=7$, then $S=\mathcal{S}'_7$, $|S''|=2k-4$ and hence $|{[4k]} \setminus S''|=2k+4$. We can choose any $2k+3$ elements in $({[4k]} \setminus S'')\subset{[4k]}$ and call this set of elements $S_1$. Note that $S_1\in\mathcal{B}_{10}\subset\mathcal{A}_m$ and $S_1\cap S=\emptyset$.

If $|S'|=6$, then $|S''|=2k-3$ and $|{[4k]}\setminus S''|=2k+3$. Let $S_2 ={[4k]} \setminus S''$. We note that $S_2\in\mathcal{B}_{10}\subset\mathcal{A}_m$, $S_2\cap S''=\emptyset$ and hence
\[
S_2 \cap  S=S_2 \cap ( S'\cup S'' )=(S_2 \cap S') \cup (S_2 \cap S'')=\emptyset.
\]
If $|S'|=5$, then $|S''|=2k-2$, $|\mathcal{S}'_7  \setminus S'|=2$ and $|{[4k]}  \setminus  S''|=2k+2$. We randomly choose one element in $\mathcal{S}'_7  \setminus S'$, call it $a$. Let $S_3:=\{a\}\cup({[4k]}  \setminus S'')$. Then $S_3\in\mathcal{B}_9\subset\mathcal{A}_m$ and $S_3\cap S=\emptyset$.

If $|S'|=4$, then
\[
|S''|=2k-1,\quad  |\mathcal{S}'_7  \setminus S'|=3\quad\text{ and}\quad  |{[4k]} \setminus  S''|=2k+1.
\]
We randomly choose two elements in $\mathcal{S}'_7  \setminus  S'$, say $b$ and $c$ respectively. Set  $S_4:=\{b,c\}\cup ({[4k]} \setminus S'')$. Evidently  $S_4\in\mathcal{B}_8\subset\mathcal{A}_m$ and $S_4\cap S=\emptyset$.

If $|S'|=3$, then $|S''|=2k$ and $|{[4k]}\setminus S''|=2k$. As $S\not\in \mathcal{A}_m$, the $k$-Fano System  does not contain $S'$. By Lemma \ref{L:3}, there exists an element in the $k$-Fano System in $\mathcal{S}'_7$ that has an empty intersection with $S'$, denote  this element by $F$. Set $S_5:=F\cup({[4k]} \setminus S'')$. Clearly  $S_5\cap S=\emptyset$ and $S_5\in\mathcal{B}_j\subset\mathcal{A}_m$ for some $j\in [7]$.
\end{proof}


For $n=4k+7$ and $m=2k+3$ where $k\geq 2$, we define
\begin{align*}
\mathrm{Spec}(k,2)&:=\min \{ \text{ }|\mathcal{M}_{4k+7}| \text{ } \mid \text{ }  \mathcal{M}_{4k+7} \text{ is a } (2k+3)\text{-bicommutative system in }\mathcal{P}_{n}(m)\}.\end{align*}

By the classical Erd\H os-Ko-Rado Theorem \cite{Erdos_Ko_Rado}, we state the following lemma:

\begin{lemma} \label{L:5}
If $n=4k+7$ and  $m=2k+3$ with $k\geq 2$, then
\[
\textstyle
\mathrm{Spec}(k,2)\leq\binom{n-1}{m-1}=\binom{4k+6}{2k+2}.
\]
\end{lemma}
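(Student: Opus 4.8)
The plan is to produce, explicitly, a single $(2k+3)$-bicommutative system in $\mathcal{P}_n(m)$ whose cardinality is exactly $\binom{n-1}{m-1}$; since $\mathrm{Spec}(k,2)$ is by definition the minimum over all such systems, one witness of the right size immediately yields the stated inequality. The natural candidate is the Erd\H{o}s--Ko--Rado star: I would fix a point, say $1\in[n]$, and set
\[
T:=\{S\in\mathcal{P}_n(m)\mid 1\in S\}.
\]
First I would note that the two defining conditions of a bicommutative system together say precisely that $T$ is an intersecting family of $m$-subsets that is maximal with respect to inclusion. The star $T$ is visibly intersecting, since any two of its members share the point $1$, and counting the ways to complete $\{1\}$ to an $m$-set gives $|T|=\binom{n-1}{m-1}=\binom{4k+6}{2k+2}$.

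It then remains to verify the maximality condition (ii). Here I would first record the numerical fact
\[
n-m=(4k+7)-(2k+3)=2k+4\ \geq\ 2k+3=m,
\]
equivalently $n\geq 2m$ (in fact $n=2m+1$), which is exactly the hypothesis under which Erd\H{o}s--Ko--Rado applies. Given any $S_0\in\mathcal{P}_n(m)\setminus T$, by definition $1\notin S_0$, so $1$ lies in the complement $[n]\setminus S_0$, which has $n-m=2k+4$ elements. Because $2k+4\geq m$, I can enlarge $\{1\}$ to a set $S_1\subseteq[n]\setminus S_0$ with $1\in S_1$ and $|S_1|=m$; then $S_1\in T$ and $S_1\cap S_0=\emptyset$, which is exactly what (ii) demands. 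Hence $T$ is a $(2k+3)$-bicommutative system, and $\mathrm{Spec}(k,2)\leq|T|=\binom{4k+6}{2k+2}$.

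There is no genuine obstacle in this argument; the only point requiring care is the numerical margin. The maximality step consumes $1+(m-1)=m$ elements from the complement $[n]\setminus S_0$, and since that complement has only $n-m=2k+4$ elements, the construction succeeds precisely because $n\geq 2m$, here with a single element to spare. This is the same inequality that makes the star extremal in the Erd\H{o}s--Ko--Rado Theorem. Alternatively, one may bypass the direct check of (ii) by invoking that theorem outright: the star is a \emph{maximum} intersecting family, and a maximum intersecting family is automatically inclusion-maximal, hence bicommutative.
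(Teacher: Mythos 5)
Your proof is correct, and it takes a route that is genuinely more elementary than the paper's. The paper gives no written argument at all: the lemma is stated ``by the classical Erd\H{o}s--Ko--Rado Theorem,'' the implied reasoning being that every $(2k+3)$-bicommutative system is in particular an intersecting family of $m$-subsets of $[n]$ with $n=2m+1\geq 2m$, so the (nontrivial) EKR upper bound caps every such system at $\binom{n-1}{m-1}$, and hence also caps the minimum $\mathrm{Spec}(k,2)$ --- an argument that, strictly speaking, also needs the easy fact that at least one bicommutative system exists, since a minimum over an empty collection is not bounded by anything. You instead produce one explicit witness, the star $T=\{S\in\mathcal{P}_n(m)\mid 1\in S\}$: condition (i) is immediate, condition (ii) follows from the counting margin $n-m=2k+4\geq m=2k+3$, and $|T|=\binom{n-1}{m-1}=\binom{4k+6}{2k+2}$ by elementary counting. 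This never invokes the hard direction of Erd\H{o}s--Ko--Rado and is fully self-contained. What the paper's citation buys is brevity and the stronger conclusion that \emph{all} bicommutative systems obey the bound; what your construction buys is that the lemma rests on nothing beyond trivial combinatorics --- and for bounding a minimum, a single small witness is exactly what is needed. Your closing alternative (the star is maximum by EKR, hence inclusion-maximal, hence bicommutative) is essentially the paper's intended argument, so you have both routes in hand.
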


Based on our construction of the $m$-bicommutative system $\mathcal{A}_m$,
\begin{align*}
|\mathcal{A}_m| &= |\cup_{i=1}^7  \mathcal{B}_i|+|\mathcal{B}_8|+|\mathcal{B}_9|+|\mathcal{B}_{10}| \\
     &=
     7\textstyle \binom{4k}{2k}+\binom{7}{2}\binom{4k}{2k+1} +\binom{7}{1}\binom{4k}{2k+2} +\binom{4k}{2k+3}.
\end{align*}

If $k\rightarrow \infty$,
\begin{align*}
\frac{\binom{4k}{2k}}{\binom{4k+6}{2k+2}} &= \frac{(2k+1)(2k+2)(2k+1)(2k+2)(2k+3)(2k+4)}{(4k+1)(4k+2)(4k+3)(4k+4)(4k+5)(4k+6)}\rightarrow\frac{1}{64}, \\
\frac{\binom{4k}{2k+1}}{\binom{4k+6}{2k+2}} &= \frac{(2k)(2k+1)(2k+2)(2k+2)(2k+3)(2k+4)}{(4k+1)(4k+2)(4k+3)(4k+4)(4k+5)(4k+6)}\rightarrow\frac{1}{64},\\
\frac{\binom{4k}{2k+2}}{\binom{4k+6}{2k+2}} &= \frac{(2k-1)(2k)(2k+1)(2k+2)(2k+3)(2k+4)}{(4k+1)(4k+2)(4k+3)(4k+4)(4k+5)(4k+6)}\rightarrow\frac{1}{64},\\
\frac{\binom{4k}{2k+3}}{\binom{4k+6}{2k+2}} &= \frac{(2k-2)(2k-2)(2k)(2k+1)(2k+2)(2k+4)}{(4k+1)(4k+2)(4k+3)(4k+4)(4k+5)(4k+6)}\rightarrow\frac{1}{64}.
\end{align*}
Hence, when $k\rightarrow \infty$, we have
\begin{align*}
\frac{|\mathcal{A}_m|}{\binom{4k+6}{2k+2}}&= \frac{7\binom{4k}{2k}+\binom{7}{2}\binom{4k}{2k+1} +\binom{7}{1}\binom{4k}{2k+2} +\binom{4k}{2k+3}}{\binom{4k+6}{2k+2}}\rightarrow\frac{36}{64}=0.5625.
\end{align*}

Hence, we have the following result:

\begin{theorem} \label{T:2}
If $n=4k+7$ and  $m=2k+3$, then
\begin{align*}
\textstyle
\lim_{k\rightarrow \infty}\frac{\mathrm{Spec}(k,2)}{\binom{4k+6}{2k+2}} \leq  0.5625.
\end{align*}
\end{theorem}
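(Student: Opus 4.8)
The plan is to read off the bound directly from the explicit construction already in hand. Theorem~\ref{T:1} produces, for every $k\geq 2$, a concrete $(2k+3)$-bicommutative system $\mathcal{A}_m=\cup_{i=1}^{10}\mathcal{B}_i$ in $\mathcal{P}_n(m)$. Since $\mathrm{Spec}(k,2)$ is \emph{defined} as the minimum cardinality taken over all such systems, the single set $\mathcal{A}_m$ is an admissible competitor, and I immediately get the pointwise inequality
\[
\mathrm{Spec}(k,2)\leq |\mathcal{A}_m|\qquad(k\geq 2).
\]
This is the only structural input; everything after it is asymptotic bookkeeping. Note that Lemma~\ref{L:5} already gives the cruder bound $\mathrm{Spec}(k,2)\leq\binom{4k+6}{2k+2}$, i.e. ratio $\leq 1$; the point of Theorem~\ref{T:2} is that the Fano-based construction beats this by a definite constant factor.

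Next I would compute $|\mathcal{A}_m|$ by partitioning $\mathcal{A}_m$ according to the size of $S\cap\mathcal{S}'_7\in\{3,2,1,0\}$. The size-$3$ slice consists of the seven Fano triples, each extended by an arbitrary $2k$-subset of $[4k]$, contributing $7\binom{4k}{2k}$; the size-$2$, size-$1$ and size-$0$ slices are exactly $\mathcal{B}_8,\mathcal{B}_9,\mathcal{B}_{10}$ and contribute $\binom{7}{2}\binom{4k}{2k+1}$, $\binom{7}{1}\binom{4k}{2k+2}$ and $\binom{4k}{2k+3}$. Dividing by $\binom{4k+6}{2k+2}$, I would then verify that each of the four quotients $\binom{4k}{2k+j}/\binom{4k+6}{2k+2}$ (for $j=0,1,2,3$) tends to $1/64$: after cancelling factorials, each is a ratio of six numerator factors, all asymptotic to $2k$, against the six denominator factors $(4k+1)\cdots(4k+6)$, all asymptotic to $4k$, so the limit is $(1/2)^6=1/64$. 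Combining the four slices gives
\[
\lim_{k\to\infty}\frac{|\mathcal{A}_m|}{\binom{4k+6}{2k+2}}=\frac{7+\binom{7}{2}+\binom{7}{1}+1}{64}=\frac{36}{64}=0.5625.
\]

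Finally I would pass to the limit in the pointwise inequality. Writing $a_k=\mathrm{Spec}(k,2)/\binom{4k+6}{2k+2}$ and $b_k=|\mathcal{A}_m|/\binom{4k+6}{2k+2}$, I have $a_k\leq b_k$ for all $k$ and $b_k\to 0.5625$, whence $\limsup_{k\to\infty}a_k\leq 0.5625$, which is the asserted bound. The one genuine subtlety, and the thing I would flag rather than an analytic obstacle, is that the statement is phrased with $\lim$: the construction supplies only an upper bound on $a_k$, and no matching lower bound is proved here, so a priori the limit need not exist. The clean reading is that the inequality holds for the $\limsup$ (equivalently, for the limit whenever it exists), and this is exactly what the argument above delivers. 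Apart from this interpretive point, the proof is entirely routine, resting on the already-established Theorem~\ref{T:1} and the four elementary binomial asymptotics.
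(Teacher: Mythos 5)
Your proposal is correct and takes essentially the same route as the paper: bound $\mathrm{Spec}(k,2)$ by $|\mathcal{A}_m|$ via the construction of Theorem~\ref{T:1}, compute $|\mathcal{A}_m|=7\binom{4k}{2k}+\binom{7}{2}\binom{4k}{2k+1}+\binom{7}{1}\binom{4k}{2k+2}+\binom{4k}{2k+3}$ by slicing according to $|S\cap\mathcal{S}'_7|$, and show each binomial quotient tends to $\frac{1}{64}$, yielding the limit $\frac{36}{64}=0.5625$. Your remark that the conclusion is properly a $\limsup$ bound (since no matching lower bound is proved) is a fair interpretive point that the paper leaves implicit.
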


We post the following conjecture:
\begin{conjecture}\label{Co:1}
Can we find $\lambda$ such that $0<\lambda<\frac{1}{2}$ and
\[
\textstyle
     \lim_{k\rightarrow \infty}\frac{\mathrm{Spec}(k,2)}{\binom{4k+6}{2k+2}}<\lambda?\]
\end{conjecture}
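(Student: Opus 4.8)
The plan is to prove Conjecture~\ref{Co:1}, that is, to produce a constant $\lambda\in(0,\tfrac12)$ with $\limsup_{k\to\infty}\mathrm{Spec}(k,2)/\binom{4k+6}{2k+2}<\lambda$. I first reformulate the object: since $n=4k+7=2m+1$ for $m=2k+3$, a $(2k+3)$-bicommutative system is exactly a \emph{maximal intersecting family} of $m$-subsets of $[2m+1]$, so $\mathrm{Spec}(k,2)$ is the least size of such a family (the independent domination number of the Kneser graph $K(2m+1,m)$), and the normalizer is $\binom{4k+6}{2k+2}=\binom{2m}{m-1}$.

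It is worth recording at the outset that the constant $0.5625$ of Theorem~\ref{T:2} is not optimal. Fixing a $(2m-1)$-subset $W\subset[2m+1]$ and taking $T=\binom{W}{m}$ yields a maximal intersecting family: any two members meet since $|W|=2m-1<2m$, and any $S\notin T$ with $|S|=m$ meets $[2m+1]\setminus W$, so $|W\setminus S|\ge m$ and some member avoids $S$. Hence $T$ is an $m$-bicommutative system with
\[
|T|=\binom{2m-1}{m}=\tfrac12\binom{2m}{m},\qquad \frac{|T|}{\binom{2m}{m-1}}=\frac{m+1}{2m}\xrightarrow[m\to\infty]{}\tfrac12 ,
\]
so already $\limsup_{k\to\infty}\mathrm{Spec}(k,2)/\binom{4k+6}{2k+2}\le\tfrac12$, and the real content of the conjecture is to break strictly below one half.

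The chief difficulty, which I would make explicit, is that the value $\tfrac12$ is remarkably stable under local surgery. For instance, deleting three ground elements $\{a,b,c\}$ and starting from the smaller core $\binom{W'}{m}$ with $W'=[2m+1]\setminus\{a,b,c\}$ (of size $\sim\tfrac14\binom{2m}{m}$) fails to be maximal only against the $m$-sets having exactly one element in $\{a,b,c\}$; in the disjointness graph these ``targets'' split into disjoint $6$-cycles (each target is disjoint from precisely two others), so restoring maximality forces a shell that is an independent dominating set using two vertices per cycle, of size exactly $\binom{2m-2}{m-1}$. The total is then $\binom{2m-2}{m}+\binom{2m-2}{m-1}=\binom{2m-1}{m}$, i.e. back to $\tfrac12$: the shell needed to repair maximality cancels the saving of the smaller core precisely because the relevant disjointness graph is $2$-regular.

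My proposed route to $\lambda<\tfrac12$ is therefore to abandon such symmetric, locally repaired cores in favour of a genuinely global construction---for example a recursively nested family, or a random greedy maximal independent set in $K(2m+1,m)$ whose expected size is estimated through the vertex-transitivity of the graph---in which the domination burden is spread so that each admitted member kills $\omega(1)$ targets rather than the $O(1)$ forced by the $2$-regular pattern above. The main obstacle is exactly this ``conservation of one half'': every local deletion one can perform is compensated, to leading order, by the cost of re-establishing maximality, so a successful argument must either find a non-local structure that escapes the compensation or, conversely, prove a matching lower bound $\mathrm{Spec}(k,2)\ge(\tfrac12-o(1))\binom{2m}{m-1}$ showing that no such $\lambda$ exists. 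Deciding between these two outcomes is the crux the conjecture leaves open.
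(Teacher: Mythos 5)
The statement you were asked to prove is posed in the paper as an \emph{open} conjecture: the paper offers no proof of it, only the upper bound of Theorem~\ref{T:2} (ratio at most $0.5625$, via the Fano-based family $\mathcal{A}_m$). Your proposal, by its own admission, does not prove it either: you establish $\limsup_{k\to\infty}\mathrm{Spec}(k,2)/\binom{4k+6}{2k+2}\leq\tfrac12$, but you produce no $\lambda<\tfrac12$, and you explicitly leave open whether the resolution is a construction breaking below one half or a matching lower bound $\mathrm{Spec}(k,2)\geq(\tfrac12-o(1))\binom{2m}{m-1}$. That is the genuine gap: as a proof of Conjecture~\ref{Co:1} the attempt is incomplete --- the random-greedy/non-local construction you gesture at is only a plan, with no estimate carried out --- though in fairness no complete proof exists in the paper to compare against.

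The partial content you do prove is correct, and it actually strengthens the paper. The reformulation is right: with $n=2m+1$ and $m=2k+3$, a $(2k+3)$-bicommutative system is exactly a maximal intersecting family of $m$-subsets of $[2m+1]$, i.e.\ a maximal independent set in the Kneser graph $K(2m+1,m)$, so $\mathrm{Spec}(k,2)$ is its independent domination number. Your family $T$ of all $m$-subsets of a fixed $W$ with $|W|=2m-1$ is indeed intersecting (since $2m>2m-1$) and maximal (if $S\not\subset W$ then $|S\cap W|\leq m-1$, so $|W\setminus S|\geq m$), with $|T|=\binom{2m-1}{m}=\tfrac12\binom{2m}{m}$ and ratio $\tfrac{m+1}{2m}\to\tfrac12$; this improves the constant $0.5625$ of Theorem~\ref{T:2} to $\tfrac12$. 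Note moreover that already at $k=2$ one has $|T|=\binom{13}{7}=1716<1870=7\binom{8}{4}+21\binom{8}{5}+7\binom{8}{6}+\binom{8}{7}=|\mathcal{A}_7|$, so your construction is strictly smaller than $\mathcal{A}_m$ and would refute Conjecture~\ref{Co:2} as stated --- a consequence worth flagging explicitly. Your surgery heuristic also checks out: a target $\{x\}\cup A$ with $x\in\{a,b,c\}$, $A\subset W'$, $|A|=m-1$, is disjoint precisely from $\{y\}\cup(W'\setminus A)$ for the two letters $y\neq x$, so the disjointness graph on targets is a disjoint union of $6$-cycles, each needing an independent dominating set of size $2$, and the identity $\binom{2m-2}{m}+\binom{2m-2}{m-1}=\binom{2m-1}{m}$ confirms the ``conservation of one half.'' But this is evidence that $\tfrac12$ is a barrier for local modifications, not progress past it; the conjecture remains open after your attempt, exactly as it does in the paper.
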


We post a related conjecture regarding the size of an $m$-bicommutative system for finite values of $n$ and $m$ with $m<n$:
\begin{conjecture} \label{Co:2}
Let $n=4k+7$, $m=2k+3$ and $k\geq 2$. Does  the $m$-bicommutative system   $\mathcal{A}_m$ have the minimum size among all $m$-bicommutative systems in $\mathcal{P}_n(m)$?
\end{conjecture}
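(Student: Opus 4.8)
The plan is to recast Conjecture \ref{Co:2} graph-theoretically and then concentrate on the lower bound. First I would note that the hypotheses $n = 4k+7$, $m = 2k+3$ force $2m = n-1$, so an $m$-bicommutative system is exactly a \emph{maximal intersecting family} of $m$-subsets of $[n]$: part (i) is the intersecting condition, and part (ii) says the family cannot be enlarged by any $m$-set while staying intersecting. Identifying each $m$-set with a vertex of the Kneser graph $K(n,m)$ (edges joining disjoint sets; here the odd graph, regular of degree $\binom{n-m}{m} = \binom{m+1}{m} = m+1$), an $m$-bicommutative system is precisely a \emph{maximal independent set}. Hence $\mathrm{Spec}(k,2)$ equals the \emph{independent domination number} $i\big(K(n,m)\big)$, and Conjecture \ref{Co:2} is the assertion $|\mathcal{A}_m| = i\big(K(n,m)\big)$. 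The inequality $i\big(K(n,m)\big) \le |\mathcal{A}_m|$ is immediate from Theorem \ref{T:1}, so the whole content is the matching lower bound.

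For the lower bound I would first record the crude count. A maximal independent set $\mathcal{F}$ is dominating, and each vertex dominates only itself and its $m+1$ neighbours, so
\[
|\mathcal{F}| \ge \frac{\binom{n}{m}}{m+2} = \frac{2m+1}{m(m+2)}\binom{n-1}{m-1}.
\]
This is of order $\binom{n-1}{m-1}/k$, whereas Theorem \ref{T:2} gives $|\mathcal{A}_m| \sim 0.5625\,\binom{n-1}{m-1}$; so the domination bound is weaker than the target by a growing factor, and the intersecting hypothesis must be used essentially. The relevant mechanism is that when $T \in \mathcal{F}$, its $m+1$ neighbours, namely the $m$-subsets of the $(m+1)$-set $[n]\setminus T$, all lie outside $\mathcal{F}$; thus $\mathcal{F}$ dominates them ``for free'', but the neighbourhoods of distinct members overlap, and controlling this overlap is what should force $|\mathcal{F}|$ upward.

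To convert this into a proof I would seek an LP-duality (weighting) certificate adapted to the case $2m = n-1$, where maximality has a clean dual reading: for every $(m+1)$-set $C \subseteq [n]$, either some $m$-subset of $C$ lies in $\mathcal{F}$, or the complementary $m$-set $[n]\setminus C$ lies in $\mathcal{F}$. These $\binom{n}{m+1}$ covering constraints, combined with the intersecting condition, are the arena for a dual. The aim is to exhibit nonnegative weights (on $(m+1)$-sets, or a fractional packing on $m$-sets) whose total equals $|\mathcal{A}_m| = 7\binom{4k}{2k} + \binom{7}{2}\binom{4k}{2k+1} + \binom{7}{1}\binom{4k}{2k+2} + \binom{4k}{2k+3}$; I would calibrate them along the partition $[n] = \mathcal{S}'_7 \cup [4k]$ so that the Fano blocks $\mathcal{B}_1,\dots,\mathcal{B}_7$ and the pigeonhole blocks $\mathcal{B}_8,\dots,\mathcal{B}_{10}$ are each tight, exploiting the vertex-transitivity of $K(n,m)$ for the symmetric parts.

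The hard part will be this lower bound: independent domination numbers resist clean lower bounds, and an exact value for the odd graph at $n = 2m+1$ would be of independent interest. One must also allow that the conjecture is false --- any construction with ratio below $\tfrac12$ would settle Conjecture \ref{Co:1} affirmatively and thereby refute Conjecture \ref{Co:2}. For that reason I would first compute $i\big(K(15,7)\big)$ (the case $k=2$) exactly, both to confirm the direction and to expose the extremal families; a structural exchange argument reducing an arbitrary minimum $\mathcal{F}$ to the block form of $\mathcal{A}_m$ --- taking care that maximality, unlike the intersecting property, is not automatically preserved under compression --- would then be the template to push through for general $k$.
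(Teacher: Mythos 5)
You should first be clear about the status of the target: the statement is Conjecture~\ref{Co:2}, which the paper explicitly leaves \emph{open}. There is no proof in the paper to match; the only results the authors establish around it are Theorem~\ref{T:1} (that $\mathcal{A}_m$ is indeed an $m$-bicommutative system), Lemma~\ref{L:5} (the Erd\H{o}s--Ko--Rado upper bound $\mathrm{Spec}(k,2)\leq\binom{4k+6}{2k+2}$), and Theorem~\ref{T:2} (the asymptotic ratio $\leq 0.5625$). Within that frame, the correct parts of your write-up are the reformulations: since $n=2m+1$, an $m$-bicommutative system is exactly a maximal intersecting family of $m$-sets, i.e.\ a maximal independent set in the Kneser (odd) graph $K(n,m)$ of degree $\binom{m+1}{m}=m+1$, so $\mathrm{Spec}(k,2)$ is its independent domination number; the inequality $\mathrm{Spec}(k,2)\leq|\mathcal{A}_m|$ is immediate; the dual covering statement (for every $(m+1)$-set $C$, either some $m$-subset of $C$ or the complement $[n]\setminus C$ lies in the family) is a faithful restatement of maximality; and the domination count $|\mathcal{F}|\geq\binom{n}{m}/(m+2)=\frac{2m+1}{m(m+2)}\binom{n-1}{m-1}$ is arithmetically right. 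This framing via $i\bigl(K(n,m)\bigr)$ is a sensible observation that the paper does not make explicit.

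The genuine gap is that everything past this reformulation is a program, not a proof. The entire content of the conjecture is the lower bound $\mathrm{Spec}(k,2)\geq|\mathcal{A}_m|$, and for it you offer only intentions: an LP-duality weighting you ``would seek'' and ``would calibrate'' without exhibiting a single weight or verifying a single tightness condition; an exact computation of $i\bigl(K(15,7)\bigr)$ (a graph on $\binom{15}{7}=6435$ vertices) that is proposed but not performed and for which no EKR-type machinery exists --- independent domination, unlike independence number, has no known closed form for odd graphs; and a compression/exchange reduction that you yourself flag as problematic because shifting preserves the intersecting property but not maximality (a compressed maximal family can become extendable, so the standard Frankl shifting template does not transfer). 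The one bound you actually prove is smaller than the target $|\mathcal{A}_m|\sim 0.5625\binom{n-1}{m-1}$ by a factor of order $k$, and nothing in the proposal bridges that gulf. Moreover you correctly concede the statement may simply be false (any family with asymptotic ratio below $\tfrac12$ would answer Conjecture~\ref{Co:1} affirmatively and refute Conjecture~\ref{Co:2} for large $k$), so the proposal neither proves nor disproves the conjecture: it leaves the question exactly where the paper leaves it.
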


\section{Constructions of a maximal commutative algebraic system  in $\mathcal{P}_n(*)$ when $n=4k+7$ with $k\geq 2$} \label{Section3}

Let $n=4k+7$, where $k\geq 2$. By our notations in Section \ref{section2} we write  $\mathcal{S}_7' = [4k+1 ,4k+7]$, so $[n]=\mathcal{S}_7' \cup {[4k]}$.  Define
\[
\mathrm{Cone}_{2k+1} := \Big\{ S\text{  }|\text{  }S\subset{[4k]} \text{ such that }|S|=2k+1\Big\}.
\]
We recall the definitions of $\mathcal{B}_8$, $\mathcal{B}_9$ and $\mathcal{B}_{10}$ defined in Section \ref{section2} also.
Define \[\mathrm{Cone}_{2k+3}:=\cup_{i=1}^{7}\mathcal{B}_i.\]We show the following intersecting properties of $\mathrm{Cone}_{2k+1}$ and $\mathrm{Cone}_{2k+3}$.

\begin{lemma} \label{L:6}
Let $S \subset [4k+7]$ such that $|S|=2k+1$ and $S\notin \mathrm{Cone}_{2k+1}$. There always exists $S'\in \mathrm{Cone}_{2k+1} \cup\mathrm{Cone}_{2k+3}$ such that $S\cap S'=\emptyset$.
\end{lemma}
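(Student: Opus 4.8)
The plan is to split $S$ according to its overlap with the two natural blocks of $[n]=\mathcal{S}_7'\cup[4k]$. I would write $S'=S\cap\mathcal{S}_7'$ and $S''=S\cap[4k]$, and set $a=|S'|$, so that $|S''|=2k+1-a$. Since $|S|=2k+1$ and $S\notin\mathrm{Cone}_{2k+1}$, the set $S$ cannot lie entirely inside $[4k]$, hence $a\geq 1$; also $a\leq 7$ because $|\mathcal{S}_7'|=7$. The quantity that drives everything is the number of free points left in $[4k]$, namely $|[4k]\setminus S''|=4k-(2k+1-a)=2k-1+a$. The strategy is to manufacture a disjoint partner of $S$ out of these unused points, treating $a\geq 2$ and $a=1$ separately.

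For $a\geq 2$ I expect the easy branch. Here $2k-1+a\geq 2k+1$, so $[4k]\setminus S''$ contains at least $2k+1$ points; choosing any $(2k+1)$-subset $S^\ast$ of $[4k]\setminus S''$ produces an element of $\mathrm{Cone}_{2k+1}$. Since $S^\ast\subset[4k]$, it meets neither $S'\subset\mathcal{S}_7'$ nor $S''$ (the latter by construction), so $S^\ast\cap S=\emptyset$ and this range is settled.

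The case $a=1$ is the crux, and I expect it to be the only real obstacle. Now $|[4k]\setminus S''|=2k$, one short of what $\mathrm{Cone}_{2k+1}$ demands, so a partner lying inside $[4k]$ alone no longer exists; this is precisely why $\mathrm{Cone}_{2k+3}$ is forced into the statement. Write $S'=\{p\}$ with $p\in\mathcal{S}_7'$. The idea is to take all $2k$ free points of $[4k]$ and pad them up to size $2k+3$ by adjoining a Fano block of $\mathcal{S}_7'$ that avoids $p$. The decisive combinatorial fact, which I would invoke from the Fano configuration underlying Lemma \ref{L:3}, is that each of the seven points lies on exactly three of the seven blocks, so there are $7-3=4>0$ Fano blocks $F$ with $p\notin F$. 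Fixing such an $F$ and setting $S^\ast=F\cup([4k]\setminus S'')$, I get $|S^\ast|=3+2k=2k+3$ and $S^\ast\in\mathcal{B}_j\subset\mathrm{Cone}_{2k+3}$ for the appropriate $j\in[7]$. Disjointness then follows termwise: $F\cap\{p\}=\emptyset$ by the choice of $F$, while $F\cap S''=\emptyset$ and $([4k]\setminus S'')\cap\{p\}=\emptyset$ because the blocks $\mathcal{S}_7'$ and $[4k]$ are disjoint, and $([4k]\setminus S'')\cap S''=\emptyset$ by construction. Hence $S^\ast\cap S=\emptyset$, which completes this case and the lemma. The only subtlety worth flagging is confirming that $F\cup([4k]\setminus S'')$ really lands in one of $\mathcal{B}_1,\ldots,\mathcal{B}_7$, which holds since its intersection with $\mathcal{S}_7'$ is exactly a Fano block and its intersection with $[4k]$ has size $2k$.
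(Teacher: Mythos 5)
Your proposal is correct and follows essentially the same route as the paper: the same decomposition $S=S_0\cup S_1$ with $S_0\subset\mathcal{S}_7'$, $S_1\subset[4k]$, the same case split ($|S_0|\geq 2$ handled inside $\mathrm{Cone}_{2k+1}$, $|S_0|=1$ handled by adjoining a Fano block to $[4k]\setminus S_1$ to land in $\mathrm{Cone}_{2k+3}$). In fact you go slightly further than the paper by justifying the existence of a Fano block avoiding the point $p$ (each point lies on exactly three of the seven lines), a fact the paper simply asserts.
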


\begin{proof}
Clearly  $[n]=\mathcal{S}_7' \cup {[4k]}$. For $S\subset [n]$ and $|S|=2k+1$, we write $S=S_0 \cup S_1$ where $S_0\subset\mathcal{S}_7'$ and $S_1\subset{[4k]}$. If $S\notin\mathrm{Cone}_{2k+1}$, then $|S_0|>0$.

If $|S_0|\geq 2$, then $|S_1|\leq 2k-1$ and $|{[4k]} \setminus S_1|\geq 2k+1$. We pick $2k+1$ elements in ${[4k]} \setminus S_1$ and denoted  this set by $S_2$. Clearly $S_2\in\mathrm{Cone}_{2k+1}$ and $S_2 \cap S=\emptyset$.

If $|S_0|=1$, then $|S_1|=2k$ and $|{[4k]} \setminus  S_1|=2k$. We let $S_3={[4k]} \setminus S_1$. In $\mathcal{S}_7' \setminus S_0$, there must exist an element in the $k$-Fano System, denote  it by $S_4$. We note that $S_4\cup S_3 \in \mathrm{Cone}_{2k+3}$ and $(S_4 \cup S_3)\cap S =\emptyset$.
\end{proof}

\begin{lemma} \label{L:7}
$\vee^1 \mathrm{Cone}_{2k+1} = \mathrm{Cone}_{2k+1}\cup \mathcal{B}_8 \cup \mathcal{B}_9 \cup \mathcal{B}_{10}$.
\end{lemma}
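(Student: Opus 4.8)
The plan is to prove the identity by double inclusion, viewing $\vee^1\mathrm{Cone}_{2k+1}$ as the union of two layers distinguished by cardinality: the ``bottom'' layer consisting of the sets $S\in\mathrm{Cone}_{2k+1}$ themselves (each of which lies in $\vee^1 S$ by definition), and the ``top'' layer consisting of the sets $S\cup S_1$ with $S\in\mathrm{Cone}_{2k+1}$ and $S_1\in\mathcal{P}_n(2)$ satisfying $S_1\cap S=\emptyset$. The bottom layer is exactly $\mathrm{Cone}_{2k+1}$, so the real content is to identify the top layer with $\mathcal{B}_8\cup\mathcal{B}_9\cup\mathcal{B}_{10}$. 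Throughout I would keep in mind that, as in Section \ref{section2}, the families $\mathcal{B}_8,\mathcal{B}_9,\mathcal{B}_{10}$ consist of sets of size $m=2k+3$, that every $S\in\mathrm{Cone}_{2k+1}$ has size $2k+1$ and lies inside $[4k]$, and that $[n]=\mathcal{S}_7'\cup{[4k]}$.

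For the inclusion $\subseteq$, I would take an arbitrary top-layer element $S\cup S_1$ and split according to how the two points of $S_1=\{a,b\}$ distribute between $\mathcal{S}_7'$ and $[4k]$. Since $S\subset{[4k]}$ already, there are exactly three cases: if both $a,b\in{[4k]}$ then $S\cup S_1\subset{[4k]}$ has size $2k+3$, so it lies in $\mathcal{B}_{10}$; if exactly one of $a,b$ lies in $\mathcal{S}_7'$ then $S\cup S_1$ has one point in $\mathcal{S}_7'$ and $2k+2$ points in $[4k]$, so it lies in $\mathcal{B}_9$; and if both $a,b\in\mathcal{S}_7'$ then $S\cup S_1$ has two points in $\mathcal{S}_7'$ and $2k+1$ points in $[4k]$, so it lies in $\mathcal{B}_8$. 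Together with the bottom layer this yields $\vee^1\mathrm{Cone}_{2k+1}\subseteq\mathrm{Cone}_{2k+1}\cup\mathcal{B}_8\cup\mathcal{B}_9\cup\mathcal{B}_{10}$.

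For the reverse inclusion I would exhibit, for each $T$ on the right-hand side, a witness $S\in\mathrm{Cone}_{2k+1}$ together with a disjoint pair $S_1$ such that $T=S\cup S_1$ (or $T=S$). An element of $\mathcal{B}_{10}$ is a $(2k+3)$-subset $T\subset{[4k]}$; choosing any $(2k+1)$-subset $S\subset T$ and setting $S_1=T\setminus S$ gives $T\in\vee^1 S$. An element of $\mathcal{B}_9$, written $T=S'\cup S''$ with $|S'|=1$, $S'\subset\mathcal{S}_7'$, $|S''|=2k+2$, $S''\subset{[4k]}$, is handled by removing one point $a$ from $S''$: put $S=S''\setminus\{a\}\in\mathrm{Cone}_{2k+1}$ and $S_1=S'\cup\{a\}$, a disjoint $2$-set. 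An element of $\mathcal{B}_8$, written $T=S'\cup S''$ with $|S'|=2$, $S'\subset\mathcal{S}_7'$, $S''\in\mathrm{Cone}_{2k+1}$, is immediate via $S=S''$ and $S_1=S'$. Finally, each $S\in\mathrm{Cone}_{2k+1}$ lies in $\vee^1 S$ by the definition of $\vee^1$.

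The argument is essentially bookkeeping of cardinalities and of the partition $[n]=\mathcal{S}_7'\cup{[4k]}$, so no serious obstacle is expected. The one point requiring care is the convention that $\mathcal{B}_{10}$ denotes the $(2k+3)$-subsets of $[4k]$ rather than all subsets of $[4k]$: with that reading the two layers of $\vee^1\mathrm{Cone}_{2k+1}$ match the right-hand side exactly, whereas without it the equality would fail on sets of the wrong size. I would therefore state this size convention explicitly at the start of the proof.
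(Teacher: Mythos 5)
Your proof is correct and takes essentially the same approach as the paper: the case analysis on how the two points of $S_1$ distribute between $\mathcal{S}_7'$ and $[4k]$ is exactly the paper's argument for the inclusion $\vee^1\mathrm{Cone}_{2k+1}\subseteq\mathrm{Cone}_{2k+1}\cup\mathcal{B}_8\cup\mathcal{B}_9\cup\mathcal{B}_{10}$. You go slightly further than the paper by spelling out the reverse inclusion with explicit witnesses and by flagging the convention that $\mathcal{B}_{10}$ consists of $(2k+3)$-subsets of $[4k]$ (which follows from $\mathcal{B}_i\subset\mathcal{P}_n(m)$); the paper leaves both points implicit.
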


\begin{proof}
Let $S_0\subset[1 ,4k+7]$ such that $|S_0|=2$ and $S_0 \cap S=\emptyset$ for all $S\in\mathrm{Cone}_{2k+1}$. So $|S\cup S_0|=2k+3$. If $S_0 \subset {[4k]}$, then $S\cup S_0 \in\mathcal{B}_{10}$. If one element of $S_0$ is in ${[4k]}$, then $S\cup S_0\in\mathcal{B}_9$. Otherwise, $S_0\subset \mathcal{S}_7'$ and $S \cup S_0 \in\mathcal{B}_8$.
\end{proof}

As a consequence of the previous lemma we get that
\begin{equation}\label{E:3}
(\vee^1 \mathrm{Cone}_{2k+1})\cup\mathrm{Cone}_{2k+3}=\mathrm{Cone}_{2k+1} \cup (\cup_{i=1}^{10}\mathcal{B}_i).
\end{equation}

\begin{lemma} \label{L:8}
Let $S \subset[n]$ such that $|S|=2k+3$ and $S\notin (\vee^1 \mathrm{Cone}_{2k+1})\cup\mathrm{Cone}_{2k+3}$. There always exists $S'\in (\vee^1 \mathrm{Cone}_{2k+1})\cup\mathrm{Cone}_{2k+3}$ such that $S\cap S'=\emptyset$.
\end{lemma}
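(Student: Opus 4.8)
The plan is to reduce the statement directly to the maximality clause of Theorem~\ref{T:1}, using the identity \eqref{E:3}. First I would observe that, by \eqref{E:3}, the collection $(\vee^1 \mathrm{Cone}_{2k+1})\cup\mathrm{Cone}_{2k+3}$ equals $\mathrm{Cone}_{2k+1}\cup\mathcal{A}_m$, where $\mathcal{A}_m=\cup_{i=1}^{10}\mathcal{B}_i$ is the $m$-bicommutative system of Theorem~\ref{T:1}. Every member of $\mathrm{Cone}_{2k+1}$ has size $2k+1$, so (since $k\geq 2$ gives $2k+1\neq 2k+3$) the only members of this union of size $2k+3$ are exactly those lying in $\mathcal{A}_m$. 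Because the given $S$ satisfies $|S|=2k+3=m$, the hypothesis $S\notin(\vee^1 \mathrm{Cone}_{2k+1})\cup\mathrm{Cone}_{2k+3}$ is therefore equivalent to $S\notin\mathcal{A}_m$.

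Once this reduction is in place, the conclusion is immediate. By Theorem~\ref{T:1}, $\mathcal{A}_m$ is an $m$-bicommutative system, hence maximal among the subsets of $[n]$ of size $m$: since $S\notin\mathcal{A}_m$ and $|S|=m$, there exists $S_1\in\mathcal{A}_m$ with $S\cap S_1=\emptyset$. As $S_1\in\mathcal{A}_m\subseteq(\vee^1 \mathrm{Cone}_{2k+1})\cup\mathrm{Cone}_{2k+3}$, taking $S'=S_1$ finishes the argument.

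There is essentially no obstacle along this route; the real work has already been done in establishing the maximality half of Theorem~\ref{T:1} together with the set identity \eqref{E:3}, and the only point to verify carefully is the harmless size bookkeeping above. If instead one prefers a proof that does not quote Theorem~\ref{T:1}, I would simply repeat its maximality case analysis: writing $S=S_0\cup S_1$ with $S_0\subseteq\mathcal{S}_7'$ and $S_1\subseteq[4k]$, the cases $|S_0|\leq 2$ force $S$ into $\mathcal{B}_8$, $\mathcal{B}_9$ or $\mathcal{B}_{10}$ (contradicting $S\notin\mathcal{A}_m$), so $|S_0|\geq 3$; then for each value $|S_0|\in\{3,4,5,6,7\}$ one exhibits a disjoint member of $\mathrm{Cone}_{2k+1}\cup\mathcal{A}_m$ by choosing a suitable subset of $[4k]\setminus S_1$, adjoined when needed to a Fano triple of $\mathcal{S}_7'$ disjoint from $S_0$. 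In that self-contained version the single delicate point is the case $|S_0|=3$, where one must invoke Lemma~\ref{L:3} to produce a Fano triple avoiding $S_0$ (such a triple exists precisely because $S\notin\mathcal{A}_m$ forces $S_0$ not to be a Fano triple itself); the remaining cases are routine counting via the Pigeonhole Principle.
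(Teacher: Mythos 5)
Your proposal is correct and matches the paper's approach: the paper's entire proof of this lemma is ``It is clear by Theorem~\ref{T:1},'' and your argument simply makes explicit the details behind that citation (the identity \eqref{E:3}, the size bookkeeping showing $S\notin\mathcal{A}_m$, and the maximality clause of the $m$-bicommutative system). The self-contained alternative you sketch at the end is just the proof of Theorem~\ref{T:1} replayed, so it adds nothing beyond the quoted result.
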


\begin{proof}
It is clear by Theorem \ref{T:1}.
\end{proof}

Next, we look at elements $S$ of size $2k+5$ in $\vee^1 (\cup_{i=1}^{10}\mathcal{B}_i)$. Define
\[
\mathrm{Cone}_{2k+5}:=\{S\text{  }|\text{  }S\in \vee^1(\cup_{i=1}^{10}\mathcal{B}_i)\text{  and }|S|=2k+5   \}.
\]

\begin{lemma}  \label{L:9}
Let $S\subset [n]$ such that $|S|=2k+5$ and $S\notin \mathrm{Cone}_{2k+5}$. There always exists $S'\in \vee^2 (\mathrm{Cone}_{2k+1})\cup \vee^1 (\mathrm{Cone}_{2k+3})$ such that $S\cap S'=\emptyset$.
\end{lemma}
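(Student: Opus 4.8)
The plan is to reduce the statement to a single numerical inequality about how $S$ meets the two blocks $\mathcal{S}_7'$ and ${[4k]}$ of $[n]$, and then settle that inequality by a short case analysis that ultimately rests on one fact about the Fano plane. Write $S=S_0\cup S_1$ with $S_0=S\cap\mathcal{S}_7'$ and $S_1=S\cap{[4k]}$, so that $|S_0|+|S_1|=2k+5$. The first observation is that any candidate $S'$ must lie in $[n]\setminus S$, which has size $(4k+7)-(2k+5)=2k+2$. Every member of $\vee^1(\mathrm{Cone}_{2k+3})$ has size at least $2k+3$, while the members of $\vee^2(\mathrm{Cone}_{2k+1})$ have size $2k+1$, $2k+3$ or $2k+5$, of which only the size-$(2k+1)$ ones --- namely the sets of $\mathrm{Cone}_{2k+1}$ themselves (recall $\vee^1$ retains each of its input sets, so $\mathrm{Cone}_{2k+1}\subseteq\vee^2(\mathrm{Cone}_{2k+1})$) --- can fit inside a $(2k+2)$-set. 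Hence the only admissible $S'$ is a $(2k+1)$-element subset of ${[4k]}$ disjoint from $S$, and such a set exists precisely when $|{[4k]}\setminus S_1|=4k-|S_1|\geq 2k+1$, i.e. when $|S_1|\leq 2k-1$.

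So the real content is the implication that $S\notin\mathrm{Cone}_{2k+5}$ forces $|S_1|\leq 2k-1$, equivalently $|S_0|\geq 6$, and I would prove its contrapositive. Recall from the definition of $\mathrm{Cone}_{2k+5}$ together with \eqref{E:3} that a set $S$ of size $2k+5$ lies in $\mathrm{Cone}_{2k+5}$ exactly when it contains some $(2k+3)$-subset $T\in\cup_{i=1}^{10}\mathcal{B}_i$. Assuming $|S_0|\leq 5$, I would exhibit such a $T\subseteq S$ by a case split on $|S_0|$: if $|S_0|\leq 2$ then $|S_1|\geq 2k+3$ and any $(2k+3)$-subset of $S_1\subseteq{[4k]}$ lies in $\mathcal{B}_{10}$; if $|S_0|=3$ then $|S_1|=2k+2$ and $T=\{a\}\cup S_1$ for any $a\in S_0$ lies in $\mathcal{B}_9$; if $|S_0|=4$ then $|S_1|=2k+1$ and $T=\{a,b\}\cup S_1$ for two points $a,b\in S_0$ lies in $\mathcal{B}_8$; and if $|S_0|=5$ then $|S_1|=2k$ and, choosing a Fano line $F\subseteq S_0$, the set $T=F\cup S_1$ lies in $\mathcal{B}_j$ for some $j\in[7]$. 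In each case $T\subseteq S$ and $|T|=2k+3$, so $S\in\mathrm{Cone}_{2k+5}$, contrary to hypothesis.

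The one step carrying genuine content --- and the expected obstacle --- is the case $|S_0|=5$, where I must guarantee that every $5$-element subset of the seven Fano points contains a whole Fano line. This is the statement that the Fano plane admits no line-free set of $5$ points; equivalently, since the largest arc (set with no three collinear points) in $\mathrm{PG}(2,2)$ has exactly four points, any five points determine a line, or again, every pair of points is missed by exactly two of the seven lines, so the complementary $5$-set contains those two lines. I would invoke this standard property of the Fano system, citing \cite{Polster, Lint_Wilson}, rather than re-derive it. With the contrapositive established the conclusion is immediate: from $S\notin\mathrm{Cone}_{2k+5}$ we obtain $|S_1|\leq 2k-1$, hence $|{[4k]}\setminus S_1|\geq 2k+1$, and any $(2k+1)$-element subset $S'\subseteq{[4k]}\setminus S_1$ satisfies $S'\in\mathrm{Cone}_{2k+1}\subseteq\vee^2(\mathrm{Cone}_{2k+1})$ and $S\cap S'=\emptyset$, as required.
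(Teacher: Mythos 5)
Your proof is correct and follows essentially the same route as the paper's: the same decomposition $S=S_0\cup S_1$ with respect to $\mathcal{S}_7'\cup[4k]$, the same characterization that $|S|=2k+5$ lies in $\mathrm{Cone}_{2k+5}$ exactly when $|S_0|\leq 5$ (with the fact that every $5$-point subset of the Fano plane contains a line handling the critical case), and the same final witness, a $(2k+1)$-subset of $[4k]\setminus S_1$ lying in $\mathrm{Cone}_{2k+1}$. The only differences are cosmetic: you prove just the contrapositive direction actually needed and add a size-count showing a priori that the witness must come from $\mathrm{Cone}_{2k+1}$, which the paper leaves implicit.
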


\begin{proof}
We note that
\[
\begin{split}
\vee^2 (\mathrm{Cone}_{2k+1})\cup \vee^1 (\mathrm{Cone}_{2k+3}) &=\vee^1((\vee^1 \mathrm{Cone}_{2k+1})\cup\mathrm{Cone}_{2k+3})\\
&= \vee^1(\mathrm{Cone}_{2k+1} \cup (\cup_{i=1}^{10}\mathcal{B}_i)).
\end{split}
\]
Let $S'\in\mathrm{Cone}_{2k+5}$. We write $S' =S_0\cup S_1$ where $S_0\subset\mathcal{S}_7'$ and $S_1\subset{[4k]}$. It is clear that $S'\in \vee^1(\cup_{i=8}^{10}\mathcal{B}_i)$ if $0\leq |S_0|\leq 4$ and  $S'\notin \vee^1(\cup_{i=1}^{10}\mathcal{B}_i)$ if $|S_0|\in [6,7]$. If $|S_0|=5$, then $|S_1|=2k$ and $S_0$ always contains at least one element in the $k$-Fano System, call it $F$. Hence, \[S'=S_0\cup S_1\in(\vee^1 F )\cup S_1\subset \vee^1 (F \cup S_1)\subset \vee^1(\cup_{i=1}^{10} \mathcal{B}_i).
\]
Consequently, $S'\in\mathrm{Cone}_{2k+5}$ if and only if $0\leq |S_0|\leq 5$.

So, if $S\notin\mathrm{Cone}_{2k+5}$ then we set $S=S_0\cup S_1$,  where $S_0\subset\mathcal{S}_7'$ and $S_1\subset{[4k]}$. It is clear that  $|S_0|\in [6,7]$and it  implies (keeping   the order) that
\[
|S_1|\in  [2k-1, 2k-2] \quad \text{ and}\quad   |{[4k]} \setminus S_1|\in [2k+1,  2k+2]
\]
 respectively. We pick $2k+1$ elements in ${[4k]} \setminus S_1$ and denote this set by $S_3$. We note that $S_3\in\mathrm{Cone}_{2k+1}$ and $S\cap S_3=\emptyset$.
\end{proof}

Next, we look at the structure of $\mathrm{Cone}(\mathrm{Cone}_{2k+1}\cup\mathrm{Cone}_{2k+3})$.

\begin{lemma}   \label{L:10}
If $S\subset [n]$, $|S|$ is odd and $|S|\geq 2k+7$, then
\[
S\in \mathrm{Cone}(\mathrm{Cone}_{2k+1}\cup \mathrm{Cone}_{2k+3}).
\]
\end{lemma}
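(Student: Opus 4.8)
The plan is to reduce the claim to a statement about containment. First I would record the characterization of the cone operator that is implicit in the definitions of Section~\ref{S:1}: for a single set $S_0$, the collection $\mathrm{Cone}(S_0)$ consists precisely of those $T$ with $S_0 \subseteq T \subseteq [n]$ and $|T| \equiv |S_0| \pmod 2$, since one reaches any such $T$ by adjoining the elements of $T \setminus S_0$ two at a time. Extending this naturally to a family, $\mathrm{Cone}(\mathrm{Cone}_{2k+1} \cup \mathrm{Cone}_{2k+3})$ is the set of all $T \subseteq [n]$ that contain some $S_0 \in \mathrm{Cone}_{2k+1} \cup \mathrm{Cone}_{2k+3}$ with $|T| \equiv |S_0| \pmod 2$. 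Because every base set has odd size ($2k+1$ or $2k+3$) and $S$ is assumed to have odd size, the parity condition is automatic. Hence it suffices to exhibit an element of $\mathrm{Cone}_{2k+1} \cup \mathrm{Cone}_{2k+3}$ contained in $S$.

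To do this I would split according to how $S$ meets the two blocks $\mathcal{S}_7' = [4k+1, 4k+7]$ and ${[4k]}$. Write $a = |S \cap \mathcal{S}_7'|$ and $b = |S \cap {[4k]}|$, so that $a \leq 7$ and $a + b = |S| \geq 2k+7$; in particular $b \geq 2k$. If $b \geq 2k+1$, then any $(2k+1)$-element subset $S_0$ of $S \cap {[4k]}$ lies in $\mathrm{Cone}_{2k+1}$ and is contained in $S$, and we are done by the reduction above.

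The only remaining case is $b = 2k$, and here lies the single point that needs care: since $a + b \geq 2k+7$ and $a \leq 7$, the equality $b = 2k$ forces $a = 7$, that is, $\mathcal{S}_7' \subseteq S$. Consequently $S \cap \mathcal{S}_7'$ contains the Fano block $\{4k+1, 4k+2, 4k+5\}$, so that $S_0 := \{4k+1, 4k+2, 4k+5\} \cup (S \cap {[4k]})$ has size $2k+3$, belongs to $\mathcal{B}_1 \subseteq \mathrm{Cone}_{2k+3}$, and satisfies $S_0 \subseteq S$. In either case $S \in \mathrm{Cone}(S_0) \subseteq \mathrm{Cone}(\mathrm{Cone}_{2k+1} \cup \mathrm{Cone}_{2k+3})$, as required. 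I do not expect any serious obstacle: the content of the hypothesis $|S| \geq 2k+7$ is exactly that it guarantees $b \geq 2k$, and the threshold case $b = 2k$ is precisely where all seven elements of $\mathcal{S}_7'$ must be present, which in turn guarantees a Fano block and hence membership in $\mathrm{Cone}_{2k+3}$.
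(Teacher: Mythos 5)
Your proof is correct, but it takes a different route from the paper's. The paper reduces to the boundary case $|S|=2k+7$, then removes two well-chosen elements of $S$ (taking them from $S\cap\mathcal{S}_7'$ when $|S\cap\mathcal{S}_7'|\geq 6$) so as to land in $\mathrm{Cone}_{2k+5}$, and then invokes the characterization from the proof of Lemma~\ref{L:9} (namely that a $(2k+5)$-set $S'=S_0'\cup S_1'$ lies in $\mathrm{Cone}_{2k+5}$ exactly when $|S_0'|\leq 5$, and that $\mathrm{Cone}_{2k+5}\subset\vee^2(\mathrm{Cone}_{2k+1}\cup\mathrm{Cone}_{2k+3})$). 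You instead go all the way down to the generators: after making explicit the fact that $\mathrm{Cone}(\mathcal{B})$ consists precisely of the supersets (of matching parity) of members of $\mathcal{B}$, you exhibit a generator inside $S$ directly, splitting on $b=|S\cap[4k]|$: if $b\geq 2k+1$ you find an element of $\mathrm{Cone}_{2k+1}$, and in the threshold case $b=2k$ (which forces $\mathcal{S}_7'\subseteq S$ and $|S|=2k+7$) you find an element of $\mathcal{B}_1\subseteq\mathrm{Cone}_{2k+3}$. The trade-off: the paper's argument is a near one-liner given Lemma~\ref{L:9}, and fits the ``peel off two elements'' pattern used throughout Section~\ref{Section3}; yours is self-contained, does not depend on Lemma~\ref{L:9} at all, and has the additional merit of making rigorous the upward-closure property of the cone operator, which the paper's opening reduction (``we only need to look at the case $|S|=2k+7$'') uses but never states.
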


\begin{proof}
We only need to look at the case $|S|=2k+7$. Write $S=S_0\cup S_1$, where $S_0\subset \mathcal{S}_7'$ and  $S_1\subset {[4k]}$. Then there always exists $S'=S_0' \cup S_1'$, in which $S_0'\subset \mathcal{S}_7'$ and  $S_1'\subset {[4k]}$ such that $|S'|=2k+5$, $|S_0'|\leq 5$ and $S\in \vee^1\ S'$. However
\[
S' \in \mathrm{Cone}_{2k+5}\subset \vee^2(\mathrm{Cone}_{2k+1} \cup \mathrm{Cone}_{2k+3})
\]
based on  the proof of   previous lemma.
\end{proof}

\begin{theorem}
Let $\mathrm{Cone}=\mathrm{Cone}(\mathrm{Cone}_{2k+1} \cup \mathrm{Cone}_{2k+3})$. Then $\mathrm{Cone}$ is a maximal commutative algebraic system in $\mathcal{P}_{4k+7}(*)$ for $k\geq 2$.
\end{theorem}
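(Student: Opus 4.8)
The plan is to establish the two defining properties of a maximal commutative algebraic system for $\mathrm{Cone} = \mathrm{Cone}(\mathrm{Cone}_{2k+1} \cup \mathrm{Cone}_{2k+3})$: namely that it is a commutative algebraic system, and that it is maximal. Commutativity and the algebraic property come essentially for free: by Lemma \ref{L:2}(i) any set of the form $\mathrm{Cone}(\mathcal{B})$ is automatically an algebraic system, and for commutativity I would verify that every element of $\mathrm{Cone}_{2k+1} \cup \mathrm{Cone}_{2k+3}$ contains a common core — more precisely, that any two elements intersect — and then observe that applying the $\vee$-operation only enlarges sets, so intersections are preserved under coning. The genuine content is therefore in proving \emph{maximality}: for every odd-sized $S \in \mathcal{P}_n(*) \setminus \mathrm{Cone}$, I must produce some $S' \in \mathrm{Cone}$ with $S \cap S' = \emptyset$.

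First I would stratify the maximality verification by the size $|S|$, since $S$ has odd cardinality and $n = 4k+7$. The preceding lemmas have been engineered precisely to handle each stratum. For the small sizes the arguments are already in place: if $|S| = 2k+1$ and $S \notin \mathrm{Cone}_{2k+1}$, Lemma \ref{L:6} supplies the required disjoint $S'$; if $|S| = 2k+3$ and $S \notin (\vee^1 \mathrm{Cone}_{2k+1}) \cup \mathrm{Cone}_{2k+3}$, Lemma \ref{L:8} (via Theorem \ref{T:1}) does so; and if $|S| = 2k+5$ with $S \notin \mathrm{Cone}_{2k+5}$, Lemma \ref{L:9} provides it. The key bookkeeping step is to confirm, using \eqref{E:3} and the identities in the proof of Lemma \ref{L:9}, that these various sets $\mathrm{Cone}_{2k+1}$, $(\vee^1\mathrm{Cone}_{2k+1})\cup\mathrm{Cone}_{2k+3}$, and $\mathrm{Cone}_{2k+5}$ are exactly the slices of $\mathrm{Cone}$ in the respective cardinalities — i.e. that $S \notin \mathrm{Cone}$ forces $S$ to miss the corresponding intermediate cone, so the hypotheses of Lemmas \ref{L:6}, \ref{L:8}, \ref{L:9} are actually met.

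For the remaining large sizes, Lemma \ref{L:10} shows that every odd $S$ with $|S| \geq 2k+7$ already lies in $\mathrm{Cone}$, so there is simply nothing to check: such $S$ are never in the complement. Thus the only odd cardinalities that contribute elements to $\mathcal{P}_n(*) \setminus \mathrm{Cone}$ are $1, 3, \ldots, 2k-1$ below the threshold and $2k+1, 2k+3, 2k+5$ at the threshold. The small cardinalities $|S| \leq 2k-1$ require an additional observation: any odd set that small can be extended by a disjoint set to land in $\mathrm{Cone}_{2k+1} \cup \mathrm{Cone}_{2k+3}$, or its complement is large enough to contain a cone element disjoint from it, so I would handle these by a direct complement-counting argument in $[4k]$ analogous to the proof of Lemma \ref{L:2}(iii).

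The main obstacle I anticipate is \textbf{not} any single calculation but rather the gluing: verifying that the cardinality slices of the single object $\mathrm{Cone}$ coincide precisely with the separately-defined sets $\mathrm{Cone}_{2k+1}$, $\mathrm{Cone}_{2k+3}$, $\mathrm{Cone}_{2k+5}$ and the various $\vee^i$-iterates appearing in Lemmas \ref{L:6}--\ref{L:10}. Because $\mathrm{Cone}$ is defined as a single closure $\mathrm{Cone}(\cdot)$ while the lemmas dissect it layer by layer, the delicate point is ensuring that ``$S \notin \mathrm{Cone}$'' really does imply ``$S$ is missing from the appropriate layer,'' with no set slipping through a gap between consecutive sizes. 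Once that coherence is established — essentially a careful induction on cardinality using the recursive identity $\vee^{i} = \vee^1 \circ \vee^{i-1}$ together with \eqref{E:3} — the maximality follows by routing each $S$ to whichever of Lemmas \ref{L:6}, \ref{L:8}, \ref{L:9}, \ref{L:10} governs its size, and the theorem is complete.
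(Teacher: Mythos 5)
Your proposal is correct, and its skeleton is the same as the paper's: algebraicity comes from Lemma \ref{L:2}, and maximality is proved by stratifying the odd cardinalities and routing each stratum to Lemmas \ref{L:6}, \ref{L:8}, \ref{L:9} and \ref{L:10}, with the ``gluing'' (that the size-$j$ slices of $\mathrm{Cone}$ are exactly $\mathrm{Cone}_{2k+1}$, $\cup_{i=1}^{10}\mathcal{B}_i$, $\mathrm{Cone}_{2k+5}$, and $\mathcal{P}_n(j)$ for odd $j\geq 2k+7$) supplied by \eqref{E:3} together with the computations inside Lemmas \ref{L:7}, \ref{L:9} and \ref{L:10}; you correctly single out this bookkeeping as the only delicate point, and it does go through. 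Two of your sub-arguments genuinely differ from the paper's, both favourably. For commutativity, the paper runs a case analysis on the pair of sizes: it discards pairs with $|S|+|S'|>4k+7$ by Pigeonhole and then treats $(2k+1,2k+1)$, $(2k+1,2k+3)$ and $(2k+3,2k+3)$ using Pigeonhole in $[4k]$, \eqref{E:3} and Theorem \ref{T:1}; as written, this reduction overlooks the pair $(2k+1,2k+5)$, whose sizes sum only to $4k+6\leq 4k+7$. Your argument --- every element of $\mathrm{Cone}(\mathcal{B})$ contains a member of the base family $\mathcal{B}=\mathrm{Cone}_{2k+1}\cup\mathrm{Cone}_{2k+3}$, and $\mathcal{B}$ is pairwise intersecting (Pigeonhole in $[4k]$ within $\mathrm{Cone}_{2k+1}$ and across the two pieces, Lemma \ref{L:4}/Theorem \ref{T:1} within $\mathrm{Cone}_{2k+3}$), so intersection propagates upward to all of $\mathrm{Cone}$ --- covers every pair uniformly and in particular repairs that omission. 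One caveat: your phrase ``common core'' is misleading, since neither the Fano triples nor the $(2k+1)$-subsets of $[4k]$ share a common point; only the pairwise statement you then give is true. For the stratum $|S|\leq 2k-1$, the paper enlarges $S$ to a $(2k+1)$-set $S'$ meeting $\mathcal{S}_7'$ and feeds $S'$ to Lemma \ref{L:6}, whereas your complement count is more direct: $|[4k]\setminus S|\geq 2k+1$, so any $2k+1$ elements of $[4k]\setminus S$ form an element of $\mathrm{Cone}_{2k+1}\subset\mathrm{Cone}$ disjoint from $S$. (Note that the first alternative in that sentence of yours, extending $S$ itself into the cone, is not an operative argument --- a superset of $S$ can never witness maximality; it is the complement count that does the work.)
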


\begin{proof}
Clearly  $[n] = \mathcal{S}_7' \cup {[4k]}$. If $S,S'\in\mathrm{Cone}$ such that $|S|+|S'|> 4k+7$, then $S\cap S'\not=\emptyset$ by Pigeonhole Principle. So we need to consider the intersecting properties for $S$ and $S'$ in the case $|S|\leq 2k+3$ and $|S'|\leq 2k+3$.

If $S, S'\in\mathrm{Cone}$ such that $|S|=|S'|=2k+1$, then $S,S'\in \mathrm{Cone}_{2k+1}$ and hence $S\cap S'\not=\emptyset$ in $[4k]$ by Pigeonhole Principle.

Note that any element of size $2k+3$ in $\mathrm{Cone}$ belongs  to $\cup_{i=1}^{10} \mathcal{B}_i$ by \eqref{E:3}. If $S,S'\in\mathrm{Cone}$ such that $|S|=|S'|=2k+3$, then $S\cap S'\not=\emptyset$  by Theorem \ref{T:1}.

If $S,S'\in\mathrm{Cone}$ such that $|S|=2k+1$ and $|S'|=2k+3$. We only need to consider the case when $S\in\mathrm{Cone}_{2k+1}$ and $S'\in\mathrm{Cone}_{2k+3}$. However  $S\cap S'\not=\emptyset$ in ${[4k]}$ by Pigeonhole Principle. Hence $\mathrm{Cone}$ is commutative.

Evidently,  $\mathrm{Cone}$ is an algebraic system due to Lemma \ref{L:2}.

For the maximality of $\mathrm{Cone}$, we only need to consider $S\subset[n]$ such that
\[
|S|\quad  \text{ is odd\quad  and} \quad |S|\leq 2k-1
\]
 by Lemma \ref{L:6}, Lemma \ref{L:8}, Lemma \ref{L:9} and Lemma \ref{L:10}.

In this case, there always exists $S'=S_0' \cup S_1'$, such that $S_0'\subset\mathcal{S}_7'$ and  $S_1'\subset{[4k]}$ for which  $|S'|=2k+1$,  $S_0' \neq \emptyset$ and  $S\subset S'$. (In other words, we can always construct such $S'$ by adding more elements to $S$ with conditions  $S_0'\neq\emptyset$ and $|S'|=2k+1$.) Note that $S'\notin \mathrm{Cone}_{2k+1}$. However  there exists
\[
S''\in\mathrm{Cone}_{2k+1}\cup\mathrm{Cone}_{2k+3} \subset\mathrm{Cone}
\]
 such that $S''\cap S'=\emptyset$ by Lemma \ref{L:6}. Consequently,    $S\cap S'' \subset S'\cap S'' =\emptyset$.
\end{proof}

\begin{remark}
In the decomposition $[n]=\mathcal{S}_7' \cup [4n]$, there are $\binom{n}{7}$  choices  of  7 elements for $\mathcal{S}_7'$. For each such  choice of $\mathcal{S}_7'$, we  can construct a maximal commutative algebraic system $\mathrm{Cone}(\mathrm{Cone}_{2k+1}\cup \mathrm{Cone}_{2k+3})$.
\end{remark}

We write $\mathrm{Cone} =\mathrm{Cone}(\mathrm{Cone}_{2k+1}\cup \mathrm{Cone}_{2k+3})=\cup_i \mathcal{U}_i$ where $\mathcal{U}_i\subset P_n(i)$. Clearly,  $\mathcal{U}_i\neq \emptyset$ if and only if $i=2k+1,2k+3,\ldots ,4k+7$. More precisely,
\begin{align*}
\mathcal{U}_{2k+1}&=\mathrm{Cone}_{2k+1},\quad \mathcal{U}_{2k+3}=\cup_{i=1}^{10} \mathcal{B}_i,\\
\mathcal{U}_{2k+5}&=\{S\text{  }|\text{  }S=S' \cup S''\text{ such that }0\leq |S'| \leq 5\text{ and }S'\subset\mathcal{S}_7', S''\subset{[4k]} \},\\
\mathcal{U}_j &= \mathcal{P}_n(j), \text{ for }j\geq 2k+7\text{ and }j\text{ is odd},
\end{align*}
where the last equation  is based on Lemma \ref{L:10}.

It follows  that
\[
\begin{split}
|\mathcal{U}_{2k+1}| &= \textstyle \binom{4k}{2k+1},\\
\textstyle
|\mathcal{U}_{2k+3}| &= 7\textstyle \binom{4k}{2k}+\binom{7}{2} \binom{4k}{2k+1} +\binom{7}{1} \binom{4k}{2k+2} +\binom{4k}{2k+3}, \\
|\mathcal{U}_{2k+5}| &=\textstyle  \sum_{i=0}^5 \binom{7}{i} \binom{4k}{2k+5-i},\\
|\mathcal{U}_{j}| &= \textstyle \binom{4k+7}{j}, \text{   }j\geq 2k+7\text{ and }j\text{ is odd}.
\end{split}
\]
As a result, we have
\begin{align*}
|\mathrm{Cone}| &= |\mathcal{U}_{2k+1}|+|\mathcal{U}_{2k+3}|+|\mathcal{U}_{2k+5}| +\sum_{j\geq 2k+7, j\text{ is odd}}|\mathcal{U}_{2k+7}|.
\end{align*}
We checked  the values of $|\mathrm{Cone}|=|\mathrm{Cone}(\mathrm{Cone}_{2k+1}\cup \mathrm{Cone}_{2k+3})|$ up to $k=249$ numerically.  The calculation shows that
\[
0.9\cdot 2^{n-2}<|\mathrm{Cone}|< 2^{n-2},
\]
but $|\mathrm{Cone}|$ gets sufficiently close to $2^{n-2}$ as $k$ gets bigger. More precisely, let $[s_k]_{k\geq 2}$ be the following sequence of real numbers:
\[
\textstyle
s_k=\frac{|\mathrm{Cone}|}{2^{n-2}}
\]
where $n=4k+7$ and $k\geq 2$. It is an increasing sequence with $s_2=0.97437$, $s_{249}=0.99763$,  $s_k<1$ for $k\in [2,249]$. So, we state the following conjecture:
\begin{conjecture}\label{Co:3}
Let $n=4k+7$  with $k\geq 2$ and $\mathrm{Cone}=\mathrm{Cone}(\mathrm{Cone}_{2k+1}\cup\mathrm{Cone}_{2k+3})$. Is  the following equation  true:
\[
\textstyle
\lim_{n\rightarrow \infty}\frac{|\mathrm{Cone}|}{2^{n-2}}=1? \]
\end{conjecture}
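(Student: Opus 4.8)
The plan is to isolate the dominant contribution to $|\mathrm{Cone}|$ and show it is asymptotic to $2^{n-2}$, while every remaining contribution is of strictly smaller order. Recall from the level-set decomposition $\mathrm{Cone}=\cup_i\mathcal{U}_i$ that the sets $\mathcal{U}_j=\mathcal{P}_n(j)$ are \emph{full} for every odd $j\geq 2k+7$, whereas $\mathcal{U}_{2k+1}$, $\mathcal{U}_{2k+3}$, $\mathcal{U}_{2k+5}$ are built only from binomials of the form $\binom{4k}{\cdot}$. Writing
\[
T:=\sum_{\substack{j\geq 2k+7\\ j\text{ odd}}}\binom{n}{j}=\sum_{\substack{j\geq 2k+7\\ j\text{ odd}}}|\mathcal{U}_j|,
\]
we have $|\mathrm{Cone}|=T+|\mathcal{U}_{2k+1}|+|\mathcal{U}_{2k+3}|+|\mathcal{U}_{2k+5}|$. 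It therefore suffices to prove (a) $T/2^{n-2}\to 1$ and (b) $\big(|\mathcal{U}_{2k+1}|+|\mathcal{U}_{2k+3}|+|\mathcal{U}_{2k+5}|\big)/2^{n-2}\to 0$; the conjecture then follows by a squeeze.

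For step (a) the main device is to extract the parity constraint via $(1-(-1)^j)/2$, writing
\[
T=\tfrac12\sum_{j\geq 2k+7}\binom{n}{j}-\tfrac12\sum_{j\geq 2k+7}(-1)^j\binom{n}{j}.
\]
Since $n=4k+7$ is odd, symmetry of the binomial coefficients gives $\sum_{j\geq 2k+4}\binom{n}{j}=2^{n-1}$, so the first sum equals $2^{n-1}$ minus the three near-central terms $\binom{n}{2k+4}+\binom{n}{2k+5}+\binom{n}{2k+6}$. For the alternating sum I would invoke the partial-sum identity $\sum_{j=0}^{m}(-1)^j\binom{n}{j}=(-1)^m\binom{n-1}{m}$ together with $\sum_{j=0}^n(-1)^j\binom{n}{j}=0$, which collapses $\sum_{j\geq 2k+7}(-1)^j\binom{n}{j}$ to the single term $-\binom{4k+6}{2k+6}$. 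Combining these yields
\[
T=2^{n-2}-\tfrac12\Big(\binom{n}{2k+4}+\binom{n}{2k+5}+\binom{n}{2k+6}\Big)+\tfrac12\binom{4k+6}{2k+6}.
\]
Every error term here is a binomial coefficient within a bounded distance of the centre of row $n$ or $n-1$, hence $O(2^n/\sqrt n)$ by Stirling's formula; as $2^n/\sqrt n=o(2^{n-2})$, this gives $T/2^{n-2}=1+O(1/\sqrt k)\to 1$.

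Step (b) is a crude size estimate. Each of $|\mathcal{U}_{2k+1}|=\binom{4k}{2k+1}$, of $|\mathcal{U}_{2k+3}|=7\binom{4k}{2k}+\binom{7}{2}\binom{4k}{2k+1}+\binom{7}{1}\binom{4k}{2k+2}+\binom{4k}{2k+3}$, and of $|\mathcal{U}_{2k+5}|=\sum_{i=0}^5\binom{7}{i}\binom{4k}{2k+5-i}$ is a sum of boundedly many binomials $\binom{4k}{\cdot}$, each at most the central value $\binom{4k}{2k}=O(2^{4k}/\sqrt k)$. Since $2^{4k}=2^{n-7}$, the whole block is $O(2^{n-7}/\sqrt k)=o(2^{n-2})$, establishing (b). Together with (a) and the decomposition $|\mathrm{Cone}|=T+\big(|\mathcal{U}_{2k+1}|+|\mathcal{U}_{2k+3}|+|\mathcal{U}_{2k+5}|\big)$, this gives $\lim_{n\to\infty}|\mathrm{Cone}|/2^{n-2}=1$.

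The genuinely delicate point is not any single estimate but the uniform error control: one must verify that the threshold $2k+7$ exceeds the centre $\tfrac{n}{2}=2k+\tfrac72$ by only a bounded number of levels, so that \emph{all} discarded or corrected terms are near-central binomials of order $2^n/\sqrt n$ rather than a constant fraction of $2^n$. Once this is in place, the clean cancellation in the displayed formula for $T$ does the rest. I expect the main additional obstacle to arise only if one wants the sharper numerical observation $s_k<1$ rather than merely the limit: that would require tracking the sign of the residual $\tfrac12\binom{4k+6}{2k+6}-\tfrac12\big(\binom{n}{2k+4}+\binom{n}{2k+5}+\binom{n}{2k+6}\big)$ against the positive lower-order block of step (b), which is a finer comparison than the asymptotic statement in the conjecture demands.
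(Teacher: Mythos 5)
Your proposal is correct, and it does something the paper itself does not do: the paper offers no proof of this statement at all --- it is posed as an open question (Conjecture~\ref{Co:3}), supported only by numerical evaluation of $s_k$ up to $k=249$. Your argument settles the conjecture affirmatively. You start from exactly the paper's level-set data ($\mathcal{U}_j=\mathcal{P}_n(j)$ for odd $j\geq 2k+7$ by Lemma~\ref{L:10}, and the three low levels $\mathcal{U}_{2k+1},\mathcal{U}_{2k+3},\mathcal{U}_{2k+5}$ expressed in binomials $\binom{4k}{\cdot}$), but where the paper merely tabulates numbers, you extract the asymptotics: the parity filter $(1-(-1)^j)/2$, the symmetry $\sum_{j\geq 2k+4}\binom{n}{j}=2^{n-1}$, and the telescoping identity $\sum_{j=0}^{m}(-1)^j\binom{n}{j}=(-1)^m\binom{n-1}{m}$ combine to give
\[
T=2^{n-2}-\tfrac12\Bigl(\tbinom{n}{2k+4}+\tbinom{n}{2k+5}+\tbinom{n}{2k+6}\Bigr)+\tfrac12\tbinom{4k+6}{2k+6},
\]
and since each correction term here, as well as the entire low-level block (a bounded number of binomials, each at most $\binom{4k}{2k}$), is $O(2^n/\sqrt{n})=o(2^{n-2})$, the ratio $|\mathrm{Cone}|/2^{n-2}$ tends to $1$. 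I checked both identities and the resulting formula (at $k=2$ it gives $T=1471$ and $|\mathrm{Cone}|=7982$, reproducing the paper's $s_2=0.97437$ exactly), and the structural point you flag is indeed the crux: the cutoff $2k+7$ sits only a bounded number of levels above the centre $n/2=2k+7/2$, so every discarded or corrected term is near-central and hence negligible relative to $2^{n-2}$; the $O(1/\sqrt{k})$ error also explains the slow convergence the paper observed numerically. Your closing caveat is likewise accurate: the finer empirical fact $s_k<1$ for all $k$ does not follow from the limit alone, as it would require showing $\tbinom{n}{2k+4}+\tbinom{n}{2k+5}+\tbinom{n}{2k+6}-\tbinom{4k+6}{2k+6}>2\bigl(|\mathcal{U}_{2k+1}|+|\mathcal{U}_{2k+3}|+|\mathcal{U}_{2k+5}|\bigr)$, but the conjecture asks only for the limit, which your argument proves.
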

On the other hand, we construct another  maximal commutative algebraic system, $\mathcal{D}$, based on $\mathcal{A}_m=\cup_{i=1}^{10} \mathcal{B}_i$ in Section \ref{section2}.

Let $\mathcal{D} =\cup_i \mathcal{D}_{i}$ where $\mathcal{D}_i\subset \mathcal{P}_n(i)$ and
\[
\mathcal{D}_{j}=
\begin{cases}
\cup_{i=1}^{10} \mathcal{B}_i, & \text {if } j=2k+3;\\
\mathcal{P}_n(j), &  \text{if }j\geq 2k+5\text{ and }j\text{ is odd};\\
\emptyset, &  \text{otherwise}.\\
\end{cases}
\]

\begin{theorem}\label{T:4} The set
$\mathcal{D}$ is a maximal commutative algebraic system in $\mathcal{P}_{4k+7}(*)$ for $k\geq 2$.
\end{theorem}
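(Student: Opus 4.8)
The plan is to verify the three defining properties of a maximal commutative algebraic system for $\mathcal{D}$, namely that it is commutative, algebraic, and maximal, following the template established in the proof of the previous theorem on $\mathrm{Cone}$. First I would check commutativity. Since $\mathcal{D}_j$ is empty for all odd $j \leq 2k+1$, every element of $\mathcal{D}$ has size at least $2k+3$. Thus for any $S, S' \in \mathcal{D}$ we have $|S| + |S'| \geq 2(2k+3) = 4k+6$, which is only one less than $n = 4k+7$, so the Pigeonhole Principle does not immediately force intersection. The single genuinely delicate case is $|S| = |S'| = 2k+3$: here both sets lie in $\mathcal{D}_{2k+3} = \cup_{i=1}^{10}\mathcal{B}_i = \mathcal{A}_m$, and intersection follows directly from Theorem \ref{T:1}, which states precisely that $\mathcal{A}_m$ is commutative. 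In every other case at least one of the two sets has size $\geq 2k+5$, whence $|S| + |S'| \geq (2k+3) + (2k+5) = 4k+8 > 4k+7 = n$ and the Pigeonhole Principle gives $S \cap S' \neq \emptyset$. This disposes of commutativity.

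Next I would establish that $\mathcal{D}$ is algebraic. By Lemma \ref{L:2}(i) the cone operation produces algebraic systems, so the natural route is to show $\mathcal{D}$ is stable under the operation defining an algebraic system: adjoining an even-sized disjoint set to a member should land back in $\mathcal{D}$. Because $\mathcal{D}$ contains \emph{all} odd subsets of size $\geq 2k+5$ (that is, $\mathcal{D}_j = \mathcal{P}_n(j)$ for odd $j \geq 2k+5$), the only thing to verify is that if $A_2 \in \mathcal{D}_{2k+3} = \mathcal{A}_m$ and $A_1$ is an even-sized set disjoint from $A_2$, then $A_1 \cup A_2 \in \mathcal{D}$. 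If $A_1 = \emptyset$ this is trivial; otherwise $|A_1| \geq 2$, so $|A_1 \cup A_2| \geq 2k+5$ is odd and hence automatically lies in $\mathcal{P}_n(|A_1 \cup A_2|) = \mathcal{D}_{|A_1 \cup A_2|}$. Thus the algebraic property holds with essentially no computation, the point being that the top layers are complete.

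Finally I would prove maximality: for every odd-sized $S \in \mathcal{P}_n(*) \setminus \mathcal{D}$ I must exhibit $S' \in \mathcal{D}$ with $S \cap S' = \emptyset$. Since $\mathcal{D}$ already contains every odd set of size $\geq 2k+5$, and the only size-$(2k+3)$ sets it omits are those outside $\mathcal{A}_m$, the sets $S$ needing treatment are exactly those with $|S|$ odd and $|S| \leq 2k+3$. For $|S| = 2k+3$ with $S \notin \mathcal{A}_m$, maximality of $\mathcal{A}_m$ as an $m$-bicommutative system (Theorem \ref{T:1}) directly supplies a disjoint $S' \in \mathcal{A}_m = \mathcal{D}_{2k+3} \subset \mathcal{D}$. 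For odd $|S| \leq 2k+1$, the complement $[n] \setminus S$ has size $\geq 2k+6$, which is ample room to select a disjoint witness from $\mathcal{D}$; concretely I would enlarge $S$ to a set of size $2k+3$ lying outside $\mathcal{A}_m$ (choosing its trace on $\mathcal{S}_7'$ so as to avoid the Fano configuration, as in the earlier maximality arguments) and apply the $|S|=2k+3$ case, or simply pick $2k+5$ elements of the complement to form a member of $\mathcal{D}_{2k+5} = \mathcal{P}_n(2k+5)$ disjoint from $S$. I expect the main obstacle to be the maximality argument for small $S$: one must make sure the enlarged set genuinely falls outside $\mathcal{A}_m$ (equivalently, that its $\mathcal{S}_7'$-trace avoids the $k$-Fano system), and this is where Lemma \ref{L:3} on the avoidance property of the Fano plane does the real work. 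The commutativity and algebraic parts are comparatively routine once one observes that the completeness of $\mathcal{D}$ above level $2k+3$ makes the Pigeonhole Principle applicable in all but the single extremal layer.
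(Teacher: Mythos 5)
Your proposal is correct and follows essentially the same route as the paper's proof: commutativity via the Pigeonhole Principle for all pairs except two sets in the layer $\mathcal{D}_{2k+3}=\mathcal{A}_m$, which is handled by Theorem \ref{T:1}; the algebraic property from completeness of the layers of odd size $\geq 2k+5$; and maximality by invoking Theorem \ref{T:1} at size $2k+3$ and choosing a disjoint set inside a complete top layer for smaller odd sizes (the paper takes $[n]\setminus(S\cup\{s\})$, you take any $2k+5$ elements of the complement — the same idea). Your alternative suggestion of enlarging $S$ to a set of size $2k+3$ outside $\mathcal{A}_m$ is unnecessary, since your simpler complement construction already closes that case.
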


\begin{proof}
It is clear that $\mathcal{D}$ is an algebraic system.  Moreover      $S_1\cap S_2\neq \emptyset$  for   each pair of $S_1, S_2\in \mathcal{D}$ for which   $|S_1|+|S_2|\geq n+1=4k+8$ by Pigeonhole Principle. So we only need to check
$S_1\cap S_2\neq \emptyset$ only  for $S_1, S_2\in\mathcal{D}_{2k+3}$. However,   $S_1\cap S_2\neq \emptyset$ by Theorem \ref{T:1}, so $\mathcal{D}$ is commutative.

For $S\notin \mathcal{D}$ and $|S|=2k+3$, there exists $S'\in\mathcal{D}_{2k+3}\subset\mathcal{D}$ such that $S' \cap S=\emptyset$ by Theorem \ref{T:1}. For $S\notin\mathcal{D}$, $|S|$ is odd and $|S|<2k+3$, we note that
\[
|[n]\setminus S|>n-(2k+3)=2k+4.
\]
We randomly choose one element $s\in [n]\setminus S$ and set $S''=[n] \setminus  (S\cup \{s\})$. Note that $|S''|$ is odd and $|S''|>2k+3$. So, $S''\in\mathcal{D}_j$ for some odd $j\geq 2k+5$. It follows  that $S'' \cap S=\emptyset$ and hence $\mathcal{D}$ is maximal.
\end{proof}

We have constructed two  maximal commutative algebraic systems, namely $\mathrm{Cone}$ and $\mathcal{D}$, based on $\mathcal{A}_m$. The following property holds.

\begin{lemma} \label{L:11}
Let $n=4k+7$, $m=2k+3$ where $k\geq 2$.   Then $|\mathrm{Cone}|<|\mathcal{D}|$, where $\mathrm{Cone}$ and $\mathcal{D}$ are maximal commutative algebraic systems    based on $\mathcal{A}_m$ in $\mathcal{P}_{n}(*)$ constructed above.
\end{lemma}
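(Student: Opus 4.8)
The plan is to compute $|\mathcal{D}|$ and $|\mathrm{Cone}|$ by comparing their graded pieces level by level, using the explicit decompositions $\mathrm{Cone}=\cup_i\mathcal{U}_i$ and $\mathcal{D}=\cup_i\mathcal{D}_i$ recorded just before the statement, and to show that the difference is a manifestly positive sum of binomial coefficients.

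First I would note that both systems are supported on odd sizes between $2k+1$ and $4k+7$, and that several graded pieces coincide. At size $2k+3$ both equal $\cup_{i=1}^{10}\mathcal{B}_i=\mathcal{A}_m$, so $|\mathcal{D}_{2k+3}|=|\mathcal{U}_{2k+3}|$; at every odd size $j\geq 2k+7$ both equal the full layer $\mathcal{P}_n(j)$ (for $\mathrm{Cone}$ this is Lemma~\ref{L:10}, for $\mathcal{D}$ it is the definition), so these terms cancel in $|\mathcal{D}|-|\mathrm{Cone}|$. The only surviving discrepancies are at sizes $2k+1$ and $2k+5$: there $\mathrm{Cone}$ contributes $\mathcal{U}_{2k+1}=\mathrm{Cone}_{2k+1}$ of size $\binom{4k}{2k+1}$ while $\mathcal{D}_{2k+1}=\emptyset$, whereas $\mathcal{D}_{2k+5}=\mathcal{P}_n(2k+5)$ of size $\binom{4k+7}{2k+5}$ while $\mathcal{U}_{2k+5}$ has size $\sum_{i=0}^5\binom{7}{i}\binom{4k}{2k+5-i}$. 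Hence
\[
|\mathcal{D}|-|\mathrm{Cone}|=\binom{4k+7}{2k+5}-\sum_{i=0}^5\binom{7}{i}\binom{4k}{2k+5-i}-\binom{4k}{2k+1}.
\]

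The key computational step is to recognise the partial sum as a truncated Vandermonde expansion. Splitting a $(2k+5)$-subset of $[n]=\mathcal{S}_7'\cup{[4k]}$ according to how many of its elements fall in the seven-element block $\mathcal{S}_7'$ gives $\binom{4k+7}{2k+5}=\sum_{i=0}^7\binom{7}{i}\binom{4k}{2k+5-i}$, so subtracting the terms $i=0,\dots,5$ leaves exactly the $i=6$ and $i=7$ contributions:
\[
\binom{4k+7}{2k+5}-\sum_{i=0}^5\binom{7}{i}\binom{4k}{2k+5-i}=7\binom{4k}{2k-1}+\binom{4k}{2k-2}.
\]
Finally I would invoke the symmetry $\binom{4k}{2k+1}=\binom{4k}{2k-1}$ to cancel one copy and obtain
\[
|\mathcal{D}|-|\mathrm{Cone}|=6\binom{4k}{2k-1}+\binom{4k}{2k-2}>0,
\]
which yields the strict inequality for every $k\geq 2$.

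There is no serious obstacle; the whole argument is bookkeeping. The one point that needs care is matching the graded pieces correctly and isolating exactly the two sizes at which the two systems differ, together with the observation that the cone's deficit at size $2k+5$ — precisely the $(2k+5)$-subsets meeting $\mathcal{S}_7'$ in six or seven points — outweighs the single layer $\mathrm{Cone}_{2k+1}$ that $\mathcal{D}$ omits. Conceptually, $\mathcal{D}$ trades the low layer $\mathcal{U}_{2k+1}$ for the full middle layer $\mathcal{P}_n(2k+5)$, and the latter is the larger exchange.
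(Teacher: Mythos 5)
Your proof is correct and takes essentially the same approach as the paper: compare the graded pieces of $\mathrm{Cone}$ and $\mathcal{D}$, note they coincide except at sizes $2k+1$ and $2k+5$, and reduce the difference to $6\binom{4k}{2k-1}+\binom{4k}{2k-2}>0$ using the symmetry $\binom{4k}{2k+1}=\binom{4k}{2k-1}$. The only cosmetic difference is that you obtain the level-$(2k+5)$ discrepancy via an explicit Vandermonde expansion of $\binom{4k+7}{2k+5}$, whereas the paper directly counts the $(2k+5)$-sets meeting $\mathcal{S}_7'$ in six or seven points.
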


\begin{proof}
Note that $\mathrm{Cone}$ and $\mathcal{D}$ have  the same collection of elements $S\in \mathcal{P}_n(i)$,  if either $i\leq 2k-1$ or $i=2k+3$ or $i\geq 2k+7$. Thus
\[
\begin{split}
|\mathcal{D}|-|\mathrm{Cone}| =&
\textstyle
 \binom{7}{7}\binom{4k}{2k-2} + \binom{7}{6} \binom{4k}{2k-1} -\binom{4k}{2k+1}\\
=&
\textstyle
\binom{4k}{2k-2}+ 6\binom{4k}{2k-1}>0.\\
\end{split}
\]
\end{proof}

\begin{remark}
If Conjecture \ref{Co:2} is true, then, by Lemma \ref{L:11} and Corollary \ref{C:1}, our construction of $\mathrm{Cone}$ would be a counterexample to a conjecture made by Domokos and Zubor (Conjecture 7.4 in \cite{Domokos_Zubor}) for $n=4k+7$ with  $k\geq  2$.
\end{remark}

We checked  the values of $|\mathcal{D}|$ up to $k=249$ numerically.  The calculation shows that
\[
2^{n-2}<|\mathcal{D}|< 1.019\cdot 2^{n-2},
\]
but $|\mathcal{D}|$ gets sufficiently close to $2^{n-2}$ as $k$ gets bigger.
More precisely, let $[d_k]_{k\geq 2}$ be the following sequence of real numbers:
\[
\textstyle
d_k=\frac{|\mathcal{D}|}{2^{n-2}}\qquad (n=4k+7, k\geq 2)
\]
It is a decreasing sequence with $d_2=1.0188$, $d_{249}=1.0031$ and $1<d_k$ for $k\in [2,249]$. It leads us to the following conjecture:

\begin{conjecture}\label{Co:4}
Let $n=4k+7$. Is the following equation true:
\[
\textstyle
\lim_{n\rightarrow \infty}\frac{|\mathcal{D}|}{2^{n-2}}=1?
\]
\end{conjecture}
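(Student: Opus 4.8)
The plan is to reduce $|\mathcal{D}|$ to a single binomial tail plus the explicitly known quantity $|\mathcal{A}_m|$, evaluate that tail in closed form, and then estimate the two leftover contributions asymptotically. First I would read off directly from the definition of $\mathcal{D}$ that
\[
|\mathcal{D}| = |\mathcal{A}_m| + \sum_{\substack{2k+5\le j\le 4k+7\\ j\text{ odd}}}\binom{4k+7}{j},
\]
since $\mathcal{D}_{2k+3}=\mathcal{A}_m=\cup_{i=1}^{10}\mathcal{B}_i$, while $\mathcal{D}_j=\mathcal{P}_n(j)$ for odd $j\ge 2k+5$ and $\mathcal{D}_j=\emptyset$ otherwise. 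Writing $n=4k+7$ and $\Sigma$ for the displayed sum, the target $\lim |\mathcal{D}|/2^{n-2}=1$ becomes the pair of claims $\Sigma\sim 2^{n-2}$ and $|\mathcal{A}_m|=o(2^{n-2})$.

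Next I would evaluate $\Sigma$ exactly. Since $n$ is odd, $\sum_{j\text{ odd}}\binom{n}{j}=2^{n-1}$, so $\Sigma=2^{n-1}-A$ where $A=\sum_{0\le j\le 2k+3,\ j\text{ odd}}\binom{n}{j}$ is the odd part of the lower half. By symmetry the full lower half satisfies $\sum_{j=0}^{2k+3}\binom{n}{j}=2^{n-1}$, and the classical partial alternating identity $\sum_{j=0}^{r}(-1)^j\binom{N}{j}=(-1)^r\binom{N-1}{r}$, applied with $N=n$ and $r=2k+3$, gives $\sum_{j=0}^{2k+3}(-1)^j\binom{n}{j}=-\binom{4k+6}{2k+3}$. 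Solving these two linear relations for the odd part yields $A=2^{n-2}+\tfrac12\binom{4k+6}{2k+3}$, hence
\[
\Sigma = 2^{n-2}-\tfrac12\binom{4k+6}{2k+3},\qquad
|\mathcal{D}| = 2^{n-2}+|\mathcal{A}_m|-\tfrac12\binom{4k+6}{2k+3}.
\]

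Finally, dividing by $2^{n-2}=2^{4k+5}$ reduces the conjecture to showing $|\mathcal{A}_m|/2^{n-2}\to 0$ and $\binom{4k+6}{2k+3}/2^{n-1}\to 0$. Both are central-type binomial coefficients of an argument of order $4k$, hence of size $\Theta(2^{4k}/\sqrt{k})$ by Stirling, whereas $2^{n-2}=\Theta(2^{4k})$; each ratio is therefore $\Theta(1/\sqrt{k})\to 0$. Concretely, $|\mathcal{A}_m|\le 36\binom{4k}{2k}$ together with the limits $\binom{4k}{2k+c}/\binom{4k+6}{2k+2}\to 1/64$ already recorded just before Theorem \ref{T:2} make these estimates routine, giving $\lim_{k\to\infty}|\mathcal{D}|/2^{n-2}=1$ and answering Conjecture \ref{Co:4} in the affirmative. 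The only genuinely delicate point is the exact evaluation of $\Sigma$: one must track the parities carefully and invoke the alternating identity with the correct index $r=2k+3$, after which only a Stirling estimate remains. As a bonus, combining the two asymptotics shows the correction term is $\sim\tfrac1{16}\binom{4k+6}{2k+3}/2^{n-2}>0$, which explains the observed $d_k>1$ and its monotone decrease toward $1$.
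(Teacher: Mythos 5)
Your proposal is correct, and it goes strictly beyond what the paper does: the paper never proves this statement at all --- Conjecture \ref{Co:4} is posed as an open question, supported only by numerical computation of $d_k=|\mathcal{D}|/2^{n-2}$ up to $k=249$. Your argument actually settles it affirmatively. The decomposition $|\mathcal{D}|=|\mathcal{A}_m|+\Sigma$ is immediate from the definition of $\mathcal{D}$ (the pieces $\mathcal{D}_j$ have pairwise distinct cardinalities, hence are disjoint), and your exact evaluation of the tail is right: with $n=4k+7$ odd, the half-sum symmetry gives $\sum_{j=0}^{2k+3}\binom{n}{j}=2^{n-1}$, the identity $\sum_{j=0}^{r}(-1)^j\binom{N}{j}=(-1)^r\binom{N-1}{r}$ at $r=2k+3$ gives $-\binom{4k+6}{2k+3}$, and solving the two relations yields
\[
\Sigma=2^{n-2}-\tfrac12\tbinom{4k+6}{2k+3},
\qquad
|\mathcal{D}|=2^{n-2}+|\mathcal{A}_m|-\tfrac12\tbinom{4k+6}{2k+3}.
\]
This closed form even reproduces the paper's data: for $k=2$ one gets $|\mathcal{A}_m|=1870$, $\Sigma=6476$, so $|\mathcal{D}|=8346$ and $d_2=8346/8192=1.0188$, exactly the value reported. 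The final step is also sound: $|\mathcal{A}_m|\le 36\binom{4k}{2k}$ and $\binom{4k+6}{2k+3}$ are both $\Theta(2^{4k}/\sqrt{k})$ by Stirling, while $2^{n-2}=2^{4k+5}$, so both correction terms are $\Theta(1/\sqrt{k})\to 0$, giving $\lim_{k\to\infty}|\mathcal{D}|/2^{n-2}=1$. Where the paper can only observe that $d_k$ decreases toward $1$, your identity explains the phenomenon structurally: the excess over $1$ is $\bigl(|\mathcal{A}_m|-\tfrac12\binom{4k+6}{2k+3}\bigr)/2^{n-2}\sim\tfrac1{16}\binom{4k+6}{2k+3}/2^{n-2}>0$, using the ratios $\binom{4k}{2k+c}/\binom{4k+6}{2k+2}\to\frac1{64}$ already recorded before Theorem \ref{T:2}. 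The only caveat worth flagging is cosmetic: the asymptotic $\frac1{16}$-estimate explains positivity of $d_k-1$ for large $k$ but does not by itself prove $d_k>1$ for every $k\ge 2$; since the conjecture asks only for the limit, this does not affect your result. The same tail-evaluation technique would likely also dispose of Conjecture \ref{Co:3}.
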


\section{Constructions of a  maximal commutative algebraic system in $\mathcal{P}_n(*)$ when $n=4k+9$ with  $k\geq 2$} \label{Section4}

Let $n=4k+9$ and $k\geq 2$. We write  $\mathcal{T}_7' = [4k+3 ,4k+9]$. So, $[n]=\mathcal{T}_7' \cup {[4k+2]}$. We set
\[
\mathcal{A}^{4k+2}_3 :=\{A^{4k+2}_1, A^{4k+2}_2,\ldots ,A^{4k+2}_7 \},
\]
where
\[
\begin{split}
A^{4k+2}_1&=\{4k+3, 4k+4, 4k+7\},\quad  A^{4k+2}_2=\{4k+3, 4k+5, 4k+8\},\\
A^{4k+2}_3&=\{4k+3, 4k+6, 4k+9\},\quad  A^{4k+2}_4=\{4k+4, 4k+5, 4k+9\},\\
A^{4k+2}_5&=\{4k+5, 4k+6, 4k+7\},\quad  A^{4k+2}_6=\{4k+7, 4k+8, 4k+9\}\\
\end{split}
\]
and $A^{4k+2}_7=\{4k+4, 4k+6, 4k+8\}$. Based on the notation introduced in Section \ref{section2}, $\mathcal{A}^{4k+2}_3$ is a {\it ($4k+2$)-Fano System}. By analogy with  Lemma \ref{L:3}, we have.

\begin{lemma} \label{L:12}
The ${4k+2}$-Fano System $\mathcal{A}^{4k+2}_3$ is a $3$-bicommutative system in $\mathcal{P}_{\mathcal{T}_7'}(3)$.
\end{lemma}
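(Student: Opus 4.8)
The plan is to recognize that $\mathcal{A}^{4k+2}_3$ is merely a relabeled copy of the original Fano System $\mathcal{A}_3$ from Lemma \ref{L:3}, and to transport every property through the relabeling bijection rather than redo the case analysis. Concretely, I would introduce the shift map $\phi\colon \mathcal{T}_7' \to [7]$ given by $\phi(i) = i - (4k+2)$. Since $\mathcal{T}_7' = [4k+3, 4k+9]$, this $\phi$ is a bijection onto $[7]$. The first step is the routine verification that $\phi(A^{4k+2}_i) = A_i$ for every $i \in [7]$; for example $\phi(\{4k+3, 4k+4, 4k+7\}) = \{1, 2, 5\} = A_1$, and the analogous computation holds for the remaining six triples. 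Hence $\phi$ carries $\mathcal{A}^{4k+2}_3$ bijectively onto $\mathcal{A}_3$.

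Next I would invoke the elementary fact that a bijection preserves the intersection structure: for $X, Y \subseteq \mathcal{T}_7'$ one has $\phi(X \cap Y) = \phi(X) \cap \phi(Y)$, so $X \cap Y = \emptyset$ exactly when $\phi(X) \cap \phi(Y) = \emptyset$. Applying this with $X = A^{4k+2}_i$ and $Y = A^{4k+2}_j$ and using the commutativity half of Lemma \ref{L:3} gives $A^{4k+2}_i \cap A^{4k+2}_j \neq \emptyset$ for all $i, j \in [7]$, which is precisely condition (i) of a $3$-bicommutative system.

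For the maximality condition (ii), I would take an arbitrary $S \in \mathcal{P}_{\mathcal{T}_7'}(3) \setminus \mathcal{A}^{4k+2}_3$ and push it forward to $\phi(S) \in \mathcal{P}_7(3) \setminus \mathcal{A}_3$, a $3$-subset of $[7]$ lying outside the original Fano System. By the maximality half of Lemma \ref{L:3} there exists $A_i \in \mathcal{A}_3$ with $A_i \cap \phi(S) = \emptyset$; pulling back through $\phi^{-1}$ then yields $A^{4k+2}_i \cap S = \emptyset$ with $A^{4k+2}_i \in \mathcal{A}^{4k+2}_3$, as needed. Both defining conditions therefore transfer verbatim from Lemma \ref{L:3}, so I do not anticipate a genuine obstacle: the entire content is the explicit check that $\phi$ matches the two lists of seven triples, after which the result is a formal consequence of Lemma \ref{L:3}.
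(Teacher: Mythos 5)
Your proposal is correct and is essentially the paper's own argument made explicit: the paper gives no separate proof, asserting Lemma \ref{L:12} ``by analogy with Lemma \ref{L:3},'' and your shift bijection $\phi(i)=i-(4k+2)$, which carries each $A^{4k+2}_i$ exactly onto $A_i$, is precisely the rigorous content of that analogy, transporting both the intersection and maximality conditions verbatim.
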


Let
\begin{equation}\label{MFdJL}
\mathcal{C}_{2k+3}:=\cup^{8}_{i=1} \mathcal{F}_i
\end{equation}
 where $\mathcal{F}_i$ is a collection of subsets of size $2k+3$ in $[n]$ as follows: \begin{align*}
\mathcal{F}_i &= \{ S\mbox{ }|\mbox{ }S=A^{4k+2}_i \cup S'\mbox{ such that }S'\subset [4k+2], |S'|=2k\}, \mbox{       }1\leq i\leq 7,\\
\mathcal{F}_8&= \{ S\mbox{ }|\mbox{ }S\subset [4k+2]\mbox{ and }|S|=2k+3\}.
\end{align*}

Let $\mathcal{P}'_7(4)$ be the collection of all subsets of size $4$ in $\mathcal{T}'_7$. $|\mathcal{P}'_7(4)| = 35$. Let
\[
\begin{split}
E_1&=\{ 4k+3, 4k+4, 4k+8, 4k+9\},\quad E_2=\{4k+4, 4k+5, 4k+7, 4k+8\},\\
E_3&=\{4k+5, 4k+6, 4k+8, 4k+9\},\quad E_4=\{4k+3, 4k+5, 4k+7, 4k+9\},\\
E_5&=\{4k+3, 4k+4, 4k+5, 4k+6\},\quad E_6=\{4k+3, 4k+6, 4k+7, 4k+8\},\\
E_7&=\{4k+4, 4k+6, 4k+7, 4k+9\}.
\end{split}
\]We set $\mathcal{Q}_7(4)\subset\mathcal{P}'_7(4)$  as follows:
\[
\mathcal{Q}_7(4):=\mathcal{P}'_7(4) \setminus \{E_1, E_2, E_3, E_4, E_5, E_6, E_7 \}.
\]
We note that $|\mathcal{Q}_7(4)|=28$ and  the elements in $\mathcal{Q}_7(4)$ are labeled randomly as $Q_1, Q_2, \ldots, Q_{28}$.

The next three lemmas involve finite combinatorics and can be checked easily.

\begin{lemma} \label{L:13}
For any $S_1\in\mathcal{A}^{4k+2}_3$ and $S_2\in\mathcal{Q}_7(4)$, we have $S_1\cap S_2\neq \emptyset$.
\end{lemma}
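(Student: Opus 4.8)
The plan is to exploit the rigidity coming from cardinalities. Since $\mathcal{T}_7'$ has exactly seven elements and $|S_1|=3$, $|S_2|=4$ for $S_1\in\mathcal{A}^{4k+2}_3$ and $S_2\in\mathcal{P}'_7(4)$, a disjointness $S_1\cap S_2=\emptyset$ would force $S_1\sqcup S_2=\mathcal{T}_7'$, i.e. $S_2=\mathcal{T}_7'\setminus S_1$. Thus, for each Fano line $A^{4k+2}_i$ there is exactly one $4$-subset of $\mathcal{T}_7'$ disjoint from it, namely its complement. Reformulating the claim in these terms, the lemma is equivalent to saying that the seven excluded sets $E_1,\ldots,E_7$ are precisely the complements of the seven Fano lines.

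First I would therefore compute the seven complements $\mathcal{T}_7'\setminus A^{4k+2}_i$ and match each one against the list $E_1,\ldots,E_7$. Writing the points of $\mathcal{T}_7'$ as $4k+3,\ldots,4k+9$, a direct check gives for instance $\mathcal{T}_7'\setminus A^{4k+2}_1=\{4k+5,4k+6,4k+8,4k+9\}=E_3$ and $\mathcal{T}_7'\setminus A^{4k+2}_5=\{4k+3,4k+4,4k+8,4k+9\}=E_1$, and continuing in this way each complement lands on a distinct $E_j$. Once one verifies all seven, one obtains $\{\mathcal{T}_7'\setminus A^{4k+2}_i : i\in[7]\}=\{E_1,\ldots,E_7\}$.

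The conclusion is then immediate: since $\mathcal{Q}_7(4)=\mathcal{P}'_7(4)\setminus\{E_1,\ldots,E_7\}$, no $S_2\in\mathcal{Q}_7(4)$ is the complement of any Fano line, so by the cardinality observation $S_2\cap A^{4k+2}_i\neq\emptyset$ for every $i\in[7]$. The only genuine work is the bookkeeping in the matching step, and the one point to watch is that the seven complements are pairwise distinct and exhaust $\{E_1,\ldots,E_7\}$ (so that exactly seven of the $\binom{7}{4}=35$ four-subsets are removed, leaving $|\mathcal{Q}_7(4)|=28$ as claimed); this is a routine finite verification rather than a real obstacle.
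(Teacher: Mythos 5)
Your proof is correct, and it is genuinely more informative than what the paper offers: the paper dismisses Lemma \ref{L:13} (together with Lemmas \ref{L:14} and \ref{L:14.5}) with the remark that it ``involves finite combinatorics and can be checked easily,'' i.e.\ an implicit brute-force check of all $7\times 28$ pairs $(S_1,S_2)$. Your cardinality observation --- that inside the seven-element set $\mathcal{T}_7'$ a $3$-set and a $4$-set are disjoint if and only if they are complementary --- collapses that check to seven complement computations, and I verified all seven: $\mathcal{T}_7'\setminus A^{4k+2}_1=E_3$, $\mathcal{T}_7'\setminus A^{4k+2}_2=E_7$, $\mathcal{T}_7'\setminus A^{4k+2}_3=E_2$, $\mathcal{T}_7'\setminus A^{4k+2}_4=E_6$, $\mathcal{T}_7'\setminus A^{4k+2}_5=E_1$, $\mathcal{T}_7'\setminus A^{4k+2}_6=E_5$, $\mathcal{T}_7'\setminus A^{4k+2}_7=E_4$. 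Since the seven lines are distinct, so are their complements, so they exhaust $\{E_1,\ldots,E_7\}$ exactly, and your conclusion follows. Beyond brevity, your argument buys conceptual content the paper leaves implicit: it explains \emph{why} the list $E_1,\ldots,E_7$ was chosen (they are precisely the complements of the Fano lines, equivalently the complements of the unique $4$-sets disjoint from each line), it confirms $|\mathcal{Q}_7(4)|=35-7=28$ as a byproduct, and essentially the same complementation argument is what makes Lemma \ref{L:14.5} true, so the three ``finite check'' lemmas become corollaries of one observation rather than three separate case analyses.
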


\begin{lemma} \label{L:14}
For any $S_1\in\mathcal{A}^{4k+2}_3$ and $s\in\mathcal{T}'_7$ such that $S_1\cap\{s\}=\emptyset$, the collection of all $S_1\cup\{s\}$ is the equal to $\mathcal{Q}_7(4)$. That is,
\[
\{ S_1\cup\{s\} \quad |\quad  S_1\in\mathcal{A}_3^{4k+2}, \quad s\in\mathcal{T}'_7, \quad S_1\cap\{s\}=\emptyset\}=\mathcal{Q}_7(4).
\]
\end{lemma}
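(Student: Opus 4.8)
The plan is to exploit the projective-plane structure of the Fano system and to reduce the statement to a computation with complements inside the seven-element set $\mathcal{T}_7'$; throughout I write $\overline{S}:=\mathcal{T}_7'\setminus S$. First I would compare cardinalities. Each line $A_i^{4k+2}$ omits exactly $7-3=4$ points of $\mathcal{T}_7'$, so the left-hand side is the union of the seven families $\{A_i^{4k+2}\cup\{s\}\mid s\in\overline{A_i^{4k+2}}\}$ and thus consists of at most $7\cdot 4=28$ four-element sets; since $|\mathcal{Q}_7(4)|=28$ as well, it suffices to prove (I) that no $A_i^{4k+2}\cup\{s\}$ equals any of $E_1,\dots,E_7$, and (II) that the $28$ sets $A_i^{4k+2}\cup\{s\}$ are pairwise distinct. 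Indeed (I) gives the inclusion of the left-hand side in $\mathcal{Q}_7(4)$, (II) shows it has exactly $28$ elements, and an inclusion between two sets of equal finite cardinality is an equality.

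The conceptual key, which I would record first, is that each $E_j$ is the complement of one of the seven Fano lines, i.e.\ $\{E_1,\dots,E_7\}=\{\overline{A_i^{4k+2}}\mid 1\le i\le 7\}$; this is a direct finite match (for instance $\overline{E_1}=A_5^{4k+2}$, and similarly for the rest). With this identification, taking complements sends each four-element set to a triple: $\overline{A_i^{4k+2}\cup\{s\}}=\overline{A_i^{4k+2}}\setminus\{s\}$ is a $3$-subset of the four-set $\overline{A_i^{4k+2}}$, while $\mathcal{Q}_7(4)$ corresponds precisely to the collection of all triples that are \emph{not} Fano lines. Both (I) and (II) then reduce to two incidence facts of $\mathcal{A}_3^{4k+2}$—any two distinct lines meet, and they meet in exactly one point—both of which are read off from the intersection table in the proof of Lemma~\ref{L:3} under the shift $i\mapsto 4k+2+i$ (cf.\ Lemma~\ref{L:12}).

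For (I), if $A_i^{4k+2}\cup\{s\}=E_j=\overline{A_\ell^{4k+2}}$ for some line $A_\ell^{4k+2}$, then complementing yields $A_\ell^{4k+2}=\overline{A_i^{4k+2}}\setminus\{s\}\subseteq\overline{A_i^{4k+2}}$, i.e.\ $A_\ell^{4k+2}$ and $A_i^{4k+2}$ are disjoint, which is impossible because the lines of $\mathcal{A}_3^{4k+2}$ pairwise intersect and each line is nonempty. For (II), suppose $A_i^{4k+2}\cup\{s\}=A_{i'}^{4k+2}\cup\{s'\}$; complementing, the common triple $T$ lies in both $\overline{A_i^{4k+2}}$ and $\overline{A_{i'}^{4k+2}}$. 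If $i\neq i'$ then $T\subseteq\overline{A_i^{4k+2}}\cap\overline{A_{i'}^{4k+2}}=\overline{A_i^{4k+2}\cup A_{i'}^{4k+2}}$, a set of size $7-5=2$ because two distinct lines span $3+3-1=5$ points; this contradicts $|T|=3$. If $i=i'$ then $\overline{A_i^{4k+2}}\setminus\{s\}=\overline{A_i^{4k+2}}\setminus\{s'\}$ forces $s=s'$. In either case the $28$ sets are distinct, completing the argument.

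I expect the only genuine obstacle to be the bookkeeping in matching $E_1,\dots,E_7$ with the seven line-complements; once that finite check is in hand the remainder is purely structural and needs no case-by-case enumeration of the $28$ members of $\mathcal{Q}_7(4)$. A direct tabulation of all $28$ sets against $\mathcal{Q}_7(4)$ is of course also available, in line with the remark that these lemmas can be checked easily.
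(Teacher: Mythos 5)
Your proof is correct, but it takes a genuinely different route from the paper's, because the paper gives no argument for this lemma at all: Lemmas~\ref{L:13}, \ref{L:14} and \ref{L:14.5} are filed under ``finite combinatorics and can be checked easily,'' i.e.\ a direct tabulation of the $7\cdot 4=28$ sets $A_i^{4k+2}\cup\{s\}$ against the $28$ members of $\mathcal{Q}_7(4)$. You replace that tabulation by a structural argument: complementation inside $\mathcal{T}_7'$, the identification $\{E_1,\dots,E_7\}=\{\mathcal{T}_7'\setminus A_i^{4k+2}\mid i\in[7]\}$ (which is exactly Lemma~\ref{L:14.5}, so you may cite it rather than re-verify it), and the incidence facts that two distinct lines of the Fano system meet, and meet in exactly one point --- both read off the intersection table in the proof of Lemma~\ref{L:3} via the relabelling behind Lemma~\ref{L:12}. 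Granting these inputs, your two steps are sound: in (I), $A_i^{4k+2}\cup\{s\}=E_j$ would force one line to lie inside the complement of another, contradicting pairwise intersection (or nonemptiness when the two lines coincide); in (II), for $i\neq i'$ a coincidence would put a $3$-set inside the $2$-element set $\mathcal{T}_7'\setminus\bigl(A_i^{4k+2}\cup A_{i'}^{4k+2}\bigr)$, which is impossible, while for $i=i'$ it trivially forces $s=s'$; then $|\mathcal{Q}_7(4)|=35-7=28$ turns the inclusion into equality. What the paper's approach buys is brevity; what yours buys is insight and a smaller verification surface --- the only finite check remaining is Lemma~\ref{L:14.5}, the rest being projective-plane structure --- plus, as a byproduct, the sharper statement that every member of $\mathcal{Q}_7(4)$ equals $A_i^{4k+2}\cup\{s\}$ for a \emph{unique} pair $(i,s)$.
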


\begin{lemma} \label{L:14.5}
For any $E_i$ where $i\in [7]$, $\mathcal{T}_7'\setminus E_i = A_j^{4k+2}$ where $A_j^{4k+2}\in \mathcal{A}_3^{4k+2}$ for some $j\in [7]$.
\end{lemma}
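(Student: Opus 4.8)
For any $E_i$ with $i\in[7]$, we have $\mathcal{T}_7'\setminus E_i = A_j^{4k+2}$ for some $j\in[7]$.

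The plan is to observe that the statement is equivalent to saying that each $E_i$ is the complement in $\mathcal{T}_7'$ of one of the Fano lines $A_j^{4k+2}$; equivalently, that the seven $4$-subsets $\{E_1,\dots,E_7\}$ are exactly the seven complements of the lines of $\mathcal{A}_3^{4k+2}$. Since $\mathcal{T}_7'$ has seven elements and each $E_i$ has four, every complement $\mathcal{T}_7'\setminus E_i$ is automatically a $3$-subset, so the only content of the lemma is that this particular $3$-subset is a line of the Fano system.

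First I would give a structural argument that avoids all seven case computations. A $3$-subset of $\mathcal{T}_7'$ that is a Fano line cannot be disjoint from any other line, because any two lines of the Fano plane meet in exactly one point (Lemma \ref{L:12}). Hence the complement of a line is a $4$-subset that contains no line, and there are exactly seven such complements (distinct lines have distinct complements). On the other hand, Lemma \ref{L:14} identifies $\mathcal{Q}_7(4)$ as the family of all $4$-subsets of the form $A_j^{4k+2}\cup\{s\}$ with $s\notin A_j^{4k+2}$, i.e.\ the $4$-subsets that contain a line; since a $4$-set can contain at most one line (two distinct lines span five points), these are $7\cdot 4=28$ distinct sets. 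As $|\mathcal{P}'_7(4)|=35$, the complementary family $\{E_1,\dots,E_7\}=\mathcal{P}'_7(4)\setminus\mathcal{Q}_7(4)$ consists precisely of the seven $4$-subsets containing no line. Matching the two counts of seven forces $\{E_i\}$ to be exactly the set of line complements, which is the assertion.

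Alternatively—and this is presumably what the remark ``can be checked easily'' refers to—one simply computes $\mathcal{T}_7'\setminus E_i$ directly for each $i$ and reads off the matching line; for instance $\mathcal{T}_7'\setminus E_1=\{4k+5,4k+6,4k+7\}=A_5^{4k+2}$, and the other six cases are entirely analogous.

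I do not expect a genuine obstacle here: the whole claim is a finite verification on a seven-element set. In the structural route the single point needing a moment's care is confirming that the map $(A_j^{4k+2},s)\mapsto A_j^{4k+2}\cup\{s\}$ is injective, so that $|\mathcal{Q}_7(4)|=28$ exactly; this holds because a $4$-set meets at most one line, and it is what makes the two counts of seven coincide, yielding that $\{E_i\}$ \emph{equals} the set of line complements rather than merely being contained in it.
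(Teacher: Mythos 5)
Your proposal is correct. The paper itself gives no argument for this lemma --- it is one of three statements dismissed with ``involve finite combinatorics and can be checked easily'' --- so the paper's intended proof is exactly your second route: compute $\mathcal{T}_7'\setminus E_i$ for each $i$ and match it against the list of lines (your sample computation $\mathcal{T}_7'\setminus E_1=\{4k+5,4k+6,4k+7\}=A_5^{4k+2}$ is right, and the remaining six cases check out the same way). Your primary, structural argument is genuinely different and more informative: it derives Lemma \ref{L:14.5} from Lemma \ref{L:14} together with the incidence properties of the Fano configuration, by showing that both $\{E_1,\dots,E_7\}$ and the family of line-complements coincide with the seven $4$-subsets of $\mathcal{T}_7'$ containing no line, and then matching cardinalities. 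This explains \emph{why} the exceptional sets $E_i$ are precisely the complements of the lines, rather than treating it as a coincidence of the two explicit lists. Two small points to flag. First, your count $|\mathcal{Q}_7(4)|=7\cdot 4=28$ uses that two distinct lines meet in \emph{exactly} one point (so that no $4$-set contains two lines); Lemma \ref{L:12} as stated only gives nonempty intersection, though the stronger fact is immediate from the explicit list (it is exactly what the proof of Lemma \ref{L:3} computes in the $0$-Fano case). Alternatively, you can sidestep injectivity entirely: $|\mathcal{Q}_7(4)|=35-7=28$ holds by definition, since the seven listed $E_i$ are visibly distinct. Second, your argument consumes Lemma \ref{L:14}, which in the paper is itself only ``checked easily''; this is logically sound, since Lemma \ref{L:14} precedes Lemma \ref{L:14.5}, but it means the structural route does not remove the need for a finite verification somewhere --- it merely relocates it.
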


Let
\begin{equation}\label{DeXppF}
\mathcal{C}_{2k+5}:=\cup^{39}_{i=1} \mathcal{D}_i
\end{equation}
where $\mathcal{D}_i$ is a collection of subsets of size $2k+5$ in $[n]$ as follows:
\begin{align*}
\mathcal{D}_1 &= \{S\mbox{ }|\mbox{ }S\subset [4k+2]\mbox{ and }|S|=2k+5\}, \\
\mathcal{D}_2 &= \{S\mbox{ }|\mbox{ }S=S'\cup S''\mbox{ such that } S'\subset\mathcal{T}'_7, S''\subset [4k+2], |S'|=1, |S''|=2k+4\},\\
\mathcal{D}_3 &= \{S\mbox{ }|\mbox{ }S=S'\cup S''\mbox{ such that } S'\subset\mathcal{T}'_7, S''\subset [4k+2], |S'|=2, |S''|=2k+3\},\\
\mathcal{D}_i &=\{S\mbox{ }|\mbox{ }S=A^{4k+2}_{i-3} \cup S''\mbox{ such that }S''\subset [4k+2], |S''|=2k+2\}, \mbox{        }4\leq i\leq 10,\\
\mathcal{D}_i &= \{S\mbox{ }|\mbox{ }S=Q_{i-10} \cup S''\mbox{ such that }S''\subset [4k+2], |S''|=2k+1\}, \mbox{        }11\leq i \leq 38,\\
\mathcal{D}_{39} &=\{S\mbox{ }|\mbox{ }S=S'\cup S''\mbox{ such that } S'\subset\mathcal{T}'_7, S''\subset [4k+2], |S'|=5, |S''|=2k\}.
\end{align*}
If $i$ is odd and $n\geq i\geq 2k+7$, let
\begin{equation}\label{TDrSM}
\mathcal{C}_{i}: =\mathcal{P}_n(i)=\mathcal{P}_{4k+9}(i).
\end{equation}

We look at the union of collections of subsets $\mathcal{C}_i$ from \eqref{MFdJL}, \eqref{DeXppF} and \eqref{TDrSM}
\[
\mathcal{C}:=\cup_i \mathcal{C}_i,
\]
in which  $i$ is odd and $i\geq 2k+3$.

\begin{lemma} \label{L:15}
The set $\mathcal{C}$ is commutative in $\mathcal{P}_{4k+9}(*)$ for $k\geq 2$.
\end{lemma}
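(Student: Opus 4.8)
The plan is to verify $S_1 \cap S_2 \neq \emptyset$ for every pair $S_1, S_2 \in \mathcal{C}$, organising the argument around two forms of the Pigeonhole Principle relative to the splitting $[n] = \mathcal{T}_7' \cup [4k+2]$. First I would note that every set in $\mathcal{C}$ has odd size at least $2k+3$, so whenever $|S_1| + |S_2| > n = 4k+9$ the intersection is automatic. Since $2k+3$ is the smallest size present, the only size-pairs with $|S_1| + |S_2| \leq 4k+9$ are $(2k+3, 2k+3)$ and $(2k+3, 2k+5)$; all other pairs are settled immediately. Thus the whole lemma reduces to pairs drawn from $\mathcal{C}_{2k+3} = \cup_{i=1}^8 \mathcal{F}_i$ and $\mathcal{C}_{2k+5} = \cup_{i=1}^{39} \mathcal{D}_i$.

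The two local tools are these: if the $[4k+2]$-parts $S_1 \cap [4k+2]$ and $S_2 \cap [4k+2]$ have sizes summing to more than $4k+2$, the sets meet inside $[4k+2]$; and if the $\mathcal{T}_7'$-parts have sizes summing to more than $7$, they meet inside $\mathcal{T}_7'$. For two sets of size $2k+3$: if either lies in $\mathcal{F}_8$, its $[4k+2]$-part has size $2k+3$, forcing the two $[4k+2]$-parts to sum to at least $4k+3$, so they meet in $[4k+2]$; otherwise both $\mathcal{T}_7'$-parts are blocks of the $(4k+2)$-Fano System and meet by Lemma \ref{L:12}. The mixed pairs of sizes $2k+3$ and $2k+5$ in which the $(2k+3)$-set lies in $\mathcal{F}_8$ are likewise closed by the $[4k+2]$-count alone.

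The remaining mixed pairs, where the size-$(2k+3)$ set lies in some $\mathcal{F}_p$ ($1 \leq p \leq 7$) and hence has a $[4k+2]$-part of size only $2k$, are the heart of the matter, and I would sort them by $t = |S_2 \cap \mathcal{T}_7'|$ for the size-$(2k+5)$ set $S_2$. When $t \leq 2$ (the collections $\mathcal{D}_1, \mathcal{D}_2, \mathcal{D}_3$) the $[4k+2]$-part of $S_2$ has size at least $2k+3$ and the $[4k+2]$-count closes the case; when $t = 3$ (the collections $\mathcal{D}_4, \ldots, \mathcal{D}_{10}$) the $\mathcal{T}_7'$-part of $S_2$ is a Fano block and Lemma \ref{L:12} applies; and when $t = 5$ (the collection $\mathcal{D}_{39}$) the two $\mathcal{T}_7'$-parts have sizes $3 + 5 = 8 > 7$, so they meet in $\mathcal{T}_7'$.

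The single genuinely obstructive case is $t = 4$, that is $S_2 \in \mathcal{D}_{11}, \ldots, \mathcal{D}_{38}$: here the $[4k+2]$-parts sum to $2k + (2k+1) = 4k+1 \leq 4k+2$ and the $\mathcal{T}_7'$-parts to $3 + 4 = 7$, so neither count forces an intersection and the design structure must be invoked. The $\mathcal{T}_7'$-part of $S_2$ is one of $Q_1, \ldots, Q_{28} \in \mathcal{Q}_7(4)$, while that of $S_1$ is a Fano block $A_p^{4k+2}$, and Lemma \ref{L:13} is precisely the assertion that these always meet. Thus the only place where commutativity is not a mere counting phenomenon is governed by the choice of $\mathcal{Q}_7(4)$ as the complement of the seven lines $E_1, \ldots, E_7$, and this is exactly the content isolated by Lemma \ref{L:13}.
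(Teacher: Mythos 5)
Your proof is correct and follows essentially the same route as the paper: reduce to the size-pairs $(2k+3,2k+3)$ and $(2k+3,2k+5)$ via the global pigeonhole bound, then dispose of each case by counting inside $[4k+2]$ or $\mathcal{T}_7'$, with Lemma \ref{L:12} handling Fano-block pairs and Lemma \ref{L:13} handling the one non-counting case $t=4$. Your organisation by $t=|S_2\cap\mathcal{T}_7'|$ is a tidier presentation of exactly the case analysis the paper carries out.
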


\begin{proof}
By Pigeonhole principle, $S_1\cap S_2\neq \emptyset$ for any $S_1\in\mathcal{C}_i$, $S_2\in\mathcal{C}_j$ with  $i,j\geq 2k+5$.

If $S\in\mathcal{C}_{2k+3}=\cup^8_{i=1} \mathcal{F}_i$, then $S\cap S_3\neq \emptyset$ for $S_3\in\mathcal{C}_{i}$, $i\geq 2k+7$, by Pigeonhole principle.

If $S\in\mathcal{F}_8\subset \mathcal{C}_{2k+3}$, then $S\subset [4k+2]$ and $|S|=2k+3$. We take any $S_4\in\mathcal{C}_{2k+5}$, then $S_4=S'_4\cup S''_4$ where $S'_4\subset\mathcal{T}'_7$, $S''_4\subset[4k+2]$ and $|S''_4|\geq 2k$. We note that $|S|+|S''_4|\geq 4k+3$ and hence $S\cap S''_4 \neq \emptyset$ in $[4k+2]$ by Pigeonhole principle. So, $S\cap S_4\neq \emptyset$.

For each  $S_5, S_6\in \mathcal{F}_8$, $S_5\cap S_6\neq \emptyset$ in $[4k+2]$ as $S_5, S_6\subset [4k+2]$ and $|S_5|+|S_6|=4k+6>4k+2$, by Pigeonhole principle.

If $S\in \mathcal{F}_{i}$, $1\leq i\leq 7$, then $S=A^{4k+2}_i\cup S'$, $1\leq i\leq 7$ and $S'\subset [4k+2]$, $|S'|=2k$. We take any $S_7\in \cup^3_{i=1} \mathcal{D}_i\subset \mathcal{C}_{2k+5}$. $S_7=S'_7\cup S''_7$ where $S'_7\subset\mathcal{T}'_7$, $S''_7\subset [4k+2]$ and $|S''_7|\geq 2k+3$. We note that $|S'|+|S''_7|\geq 4k+3$ and hence $S'\cap S''_7\neq \emptyset$ in $[4k+2]$ by Pigeonhole principle. So, $S\cap S_7\neq \emptyset$.

If $S\in\mathcal{F}_i$, $1\leq i \leq 7$ and $S_8\in\cup^{10}_{i=4} \mathcal{D}_i\subset\mathcal{C}_{2k+5}$, then $S_8=A^{4k+2}_{i-3}\cup S''_8$ for some $4\leq i\leq 10$ and $S''_8\subset [4k+2]$. $S$ and $S_8$ have non-trivial intersection in $\mathcal{T}'_7$ by Lemma \ref{L:12}.

If $S\in\mathcal{F}_i$, $1\leq i\leq 7$ and $S_9\in \cup^{38}_{i=11} \mathcal{D}_i\subset\mathcal{C}_{2k+5}$, then $S_9=Q_{i-10}\cup S''_9$ for some $11\leq i\leq 38$ and $S''_9\subset [4k+2]$. $S$ and $S_9$ have non-trivial intersection in $\mathcal{T}'_7$ by Lemma \ref{L:13}.

If $S\in\mathcal{F}_i$, $1\leq i\leq 7$ and $S_{10}\in \mathcal{D}_{39}$, then $S$ and $S_{10}$ have non-trivial intersection in $\mathcal{T}'_7$ by Pigeonhole principle.

For $S_{11}, S_{12}\in\mathcal{F}_i$, $1\leq i\leq 7$, then $S_{11}\cap S_{12}\neq\emptyset$ in $\mathcal{S}'_7$ by Lemma \ref{L:12}.

For $S_{13}\in\mathcal{F}_i$, $1\leq i\leq 7$ and $S_{14}\in\mathcal{F}_8$, then they have non-trivial intersection in $[4k+2]$ by Pigeonhole principle.
\end{proof}

\begin{lemma} \label{L:16}
The set $\mathcal{C}$ is algebraic in $\mathcal{P}_{4k+9}(*)$ for $k\geq 2$.
\end{lemma}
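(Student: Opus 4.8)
The plan is to verify the defining property of an algebraic system directly: given any even-sized $A_1 \subseteq [n]$ with $A_1 \notin \mathcal{C}$ and any $A_2 \in \mathcal{C}$ with $A_1 \cap A_2 = \emptyset$, I must show $A_1 \cup A_2 \in \mathcal{C}$. Since $\mathcal{C} = \cup_i \mathcal{C}_i$ is graded by cardinality (with $\mathcal{C}_i$ nonempty only for odd $i \geq 2k+3$), and since $|A_1 \cup A_2| = |A_1| + |A_2|$ is odd, the target membership is governed by the grading: I need to identify which $\mathcal{C}_j$ the union lands in. The cleanest route is to split by the size of $A_2$.

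First I would dispose of the high-degree cases. If $A_2 \in \mathcal{C}_i$ with $i \geq 2k+7$, or more generally if $|A_1 \cup A_2| \geq 2k+7$, then by \eqref{TDrSM} every odd-sized subset of $[n]$ of that cardinality already lies in $\mathcal{C}$, so $A_1 \cup A_2 \in \mathcal{C}_{|A_1 \cup A_2|}$ automatically. This reduces the problem to $A_2 \in \mathcal{C}_{2k+3}$ or $A_2 \in \mathcal{C}_{2k+5}$ together with $A_1$ small enough that the union stays below $2k+7$. Next I would handle $A_2 \in \mathcal{C}_{2k+3}$ with $|A_1| = 2$: then $|A_1 \cup A_2| = 2k+5$ and I must check $A_1 \cup A_2 \in \mathcal{C}_{2k+5} = \cup_{i=1}^{39}\mathcal{D}_i$. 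Writing $A_2 = A_2' \cup A_2''$ with $A_2' \subseteq \mathcal{T}_7'$ and $A_2'' \subseteq [4k+2]$, the structure of the $\mathcal{F}_i$ forces $A_2'$ to be either empty (so $A_2 \in \mathcal{F}_8$) or one of the seven Fano triples. Tracking how the two new elements of $A_1$ distribute between $\mathcal{T}_7'$ and $[4k+2]$, I would check that each resulting $A_1 \cup A_2$ matches the description of some $\mathcal{D}_i$; here Lemma \ref{L:14} is the crucial bookkeeping tool, since it guarantees that adjoining a single element of $\mathcal{T}_7'$ to a Fano triple produces exactly an element of $\mathcal{Q}_7(4)$, which is precisely the index set for $\mathcal{D}_{11},\ldots,\mathcal{D}_{38}$.

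The remaining and most delicate case is $A_2 \in \mathcal{C}_{2k+5}$ with $|A_1| = 2$, giving $|A_1 \cup A_2| = 2k+7$; by \eqref{TDrSM} this again lands in $\mathcal{C}_{2k+7} = \mathcal{P}_n(2k+7)$, so it is actually free. Thus the genuine content lies entirely in the single step $A_2 \in \mathcal{C}_{2k+3}$, $|A_1|=2$, $A_1 \cup A_2 \in \mathcal{C}_{2k+5}$, and I expect that to be the main obstacle: one must confirm that the definition of $\mathcal{C}_{2k+5}$ via the nineteen blocks $\mathcal{D}_i$ was engineered so that $\vee^1 \mathcal{C}_{2k+3} \subseteq \mathcal{C}_{2k+5}$ exactly. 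The subtle point is the interplay between the forbidden quadruples $E_1,\ldots,E_7$ excluded from $\mathcal{Q}_7(4)$ and the Fano triples: I would lean on Lemma \ref{L:14.5}, which identifies each $\mathcal{T}_7' \setminus E_i$ as a Fano triple, to show that the only $4$-subsets of $\mathcal{T}_7'$ arising as a Fano triple plus one point are exactly those in $\mathcal{Q}_7(4)$, so no union of the form $A_1 \cup A_2$ can fall outside the prescribed $\mathcal{D}_i$.

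Finally I would cover the low-size edge cases. If $A_2 \in \mathcal{C}_{2k+3}$ but $A_1 = \emptyset$ there is nothing to prove, and if $|A_1| \geq 4$ then $|A_1 \cup A_2| \geq 2k+7$ falls under the free case \eqref{TDrSM}. Assembling these pieces, every instance of the algebraic-system condition is verified, and the only non-routine verification is the combinatorial matching in the $2k+3 \to 2k+5$ step, which rests on Lemmas \ref{L:14} and \ref{L:14.5}.
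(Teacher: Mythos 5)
Your proposal is correct and takes essentially the same approach as the paper: reduce everything to the single nontrivial case $A_2\in\mathcal{C}_{2k+3}$, $|A_1|=2$ by observing that any union of size at least $2k+7$ lies in $\mathcal{C}$ automatically by \eqref{TDrSM}, then split according to $A_2\in\mathcal{F}_8$ versus $A_2\in\mathcal{F}_i$ ($i\in[7]$) and how the two points of $A_1$ distribute between $\mathcal{T}_7'$ and $[4k+2]$, with Lemma \ref{L:14} supplying the key fact that a Fano triple plus one point of $\mathcal{T}_7'$ lands in $\mathcal{Q}_7(4)$ — exactly the paper's argument. Only cosmetic slips: $\mathcal{C}_{2k+5}$ comprises thirty-nine blocks $\mathcal{D}_i$, not nineteen, and Lemma \ref{L:14} alone already gives what you additionally cite Lemma \ref{L:14.5} for.
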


\begin{proof}
If $S\in\mathcal{C}_i$ where $i\geq 2k+5$ and $S'\subset [n]$ such that $|S'|$ is even and $S\cap S'=\emptyset$, then it is obvious that $S\cup S'\in\mathcal{P}_n(j) =\mathcal{C}_j$ for some $j\geq 2k+7$.

If $S\in\mathcal{F}_8\subset\mathcal{C}_{2k+3}$ and $S'\subset [n]$ such that $|S'|=2$ and $S\cap S'=\emptyset$, then $S\cup S'\in \cup^3_{i=1} \mathcal{D}_i$. If $S''\subset [n]$ such that $|S''|$ is even, $|S''|\geq 4$ and $S\cap S''=\emptyset$, then it is obvious that $S\cup S''\in\mathcal{P}_n(j)   =\mathcal{C}_j$ for some $j\geq 2k+7$.

If $S\in\mathcal{F}_i$ for $1\leq i\leq 7$ and $S'\subset [n]$ such that $|S'|=2$ and $S\cap S'=\emptyset$, then there are a few cases to consider. First, if $S' \subset [4k+2]$, then $S\cup S'\in\cup_{j=4}^{10} \mathcal{D}_{j}$. Second, if $S'=\{s_1,s_2\}$ where $s_1\in\mathcal{T}'_7$ and $s_2\in [4k+2]$, then $S\cup S' \in\cup_{j=11}^{38} \mathcal{D}_j$ by Lemma \ref{L:14}. Third, if $S'\subset \mathcal{S}'_7$, then $S\cup S'\in\mathcal{D}_{39}$.

If $S\in\mathcal{F}_i$ for $1\leq i\leq 7$ and $S'\subset [n]$ such that $|S'|$ is even, $|S'|\geq 4$ and $S\cap S'=\emptyset$, then it is obvious that $S\cup S'\in \mathcal{P}_n(j)=\mathcal{C}_j$ for some $j\geq 2k+7$.
\end{proof}

\begin{lemma} \label{L:17}
The set $\mathcal{C}$ is maximal in $\mathcal{P}_{4k+9}(*)$ for $k\geq 2$.
\end{lemma}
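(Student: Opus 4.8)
The plan is to prove that $\mathcal{C}$ is maximal by showing that for every $S \in \mathcal{P}_n(*) \setminus \mathcal{C}$ with $|S|$ odd, there exists some $S' \in \mathcal{C}$ with $S \cap S' = \emptyset$. By Lemma \ref{L:15} and Lemma \ref{L:16} we already know $\mathcal{C}$ is commutative and algebraic, so maximality is the only remaining property needed to conclude that $\mathcal{C}$ is a maximal commutative algebraic system. The natural strategy is to stratify by $|S|$, exactly as was done in Section \ref{Section3}. Since $\mathcal{C}_i = \mathcal{P}_n(i)$ for every odd $i \geq 2k+7$, any $S$ with $|S|$ odd and $|S| \geq 2k+7$ already lies in $\mathcal{C}$ and needs no attention; likewise, any $S$ with $|S| = 2k+5$ that fails to be in $\mathcal{C}_{2k+5}$, and any $S$ with $|S| = 2k+3$ that fails to be in $\mathcal{C}_{2k+3}$, must be ruled out by exhibiting a disjoint partner. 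So the proof splits into the regimes $|S| = 2k+3$, $|S| = 2k+5$, and $|S| \leq 2k+1$ (odd).

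First I would handle $|S| = 2k+3$. Writing $S = S_0 \cup S_1$ with $S_0 \subset \mathcal{T}_7'$ and $S_1 \subset [4k+2]$, the assumption $S \notin \mathcal{C}_{2k+3} = \cup_{i=1}^8 \mathcal{F}_i$ forces $|S_0| \geq 1$ and, when $|S_0| = 3$, forces $S_0 \notin \mathcal{A}_3^{4k+2}$. The disjoint partner is then built by taking the complement of $S_1$ inside $[4k+2]$ together with a suitable piece of $\mathcal{T}_7'$: when $|S_0| = 3$ and $S_0 \notin \mathcal{A}_3^{4k+2}$, Lemma \ref{L:12} supplies an element of the $(4k+2)$-Fano system disjoint from $S_0$, and glueing it to $2k$ elements of $[4k+2] \setminus S_1$ produces a member of some $\mathcal{F}_i$; for $|S_0| \in \{1,2\}$ the complement argument lands the partner in an appropriate $\mathcal{F}_i$ or $\mathcal{C}_{2k+5}$-block, and for $|S_0| \geq 4$ the set $S_1$ is small enough that $[4k+2] \setminus S_1$ has at least $2k+3$ elements, giving a partner inside $\mathcal{F}_8$. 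This is a direct analogue of the $|S'|$-case analysis in the proof of Theorem \ref{T:1} and of Lemma \ref{L:6}.

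Next I would treat $|S| = 2k+5$, again decomposing $S = S_0 \cup S_1$. The defining blocks $\mathcal{D}_1, \ldots, \mathcal{D}_{39}$ of $\mathcal{C}_{2k+5}$ are precisely those $S$ with $0 \leq |S_0| \leq 5$ and whose $\mathcal{T}_7'$-part is permitted — for $|S_0| = 3$ the part must lie in $\mathcal{A}_3^{4k+2}$ (blocks $\mathcal{D}_4,\dots,\mathcal{D}_{10}$) and for $|S_0| = 4$ it must lie in $\mathcal{Q}_7(4)$ (blocks $\mathcal{D}_{11},\dots,\mathcal{D}_{38}$). Thus $S \notin \mathcal{C}_{2k+5}$ happens in three ways: $|S_0| \in \{6,7\}$; $|S_0| = 3$ with $S_0 \notin \mathcal{A}_3^{4k+2}$; or $|S_0| = 4$ with $S_0 \in \{E_1, \ldots, E_7\}$. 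For $|S_0| \geq 6$ the residual part $S_1$ has size at most $2k-1$, so $[4k+2] \setminus S_1$ has at least $2k+3$ elements and yields a disjoint partner in $\mathcal{F}_8 \subset \mathcal{C}_{2k+3}$. For the forbidden $|S_0| = 4$ case I would invoke Lemma \ref{L:14.5}: each $E_i$ has $\mathcal{T}_7' \setminus E_i = A_j^{4k+2}$ for some $j$, so glueing $A_j^{4k+2}$ to $2k$ elements of $[4k+2]\setminus S_1$ gives a member of $\mathcal{F}_j$ disjoint from $S$. The remaining $|S_0|=3$ subcase is handled exactly as in the $|S|=2k+3$ analysis via Lemma \ref{L:12}.

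Finally, for $|S|$ odd with $|S| \leq 2k+1$, I would mimic the closing step of the theorem in Section \ref{Section3}: enlarge $S$ to a set $S^*$ of size $2k+3$ with $S \subseteq S^*$ and nonempty $\mathcal{T}_7'$-part, chosen so that $S^* \notin \mathcal{C}_{2k+3}$, then apply the already-proved $|S|=2k+3$ case to obtain $S'' \in \mathcal{C}$ disjoint from $S^*$, whence $S \cap S'' \subseteq S^* \cap S'' = \emptyset$. The main obstacle I expect is the bookkeeping in the $|S|=2k+5$ regime, where the admissible $\mathcal{T}_7'$-parts are governed by the rather delicate set $\mathcal{Q}_7(4)$ and its complement $\{E_1,\dots,E_7\}$; the crux is that Lemma \ref{L:14.5} exactly identifies, for each forbidden $4$-subset $E_i$, a Fano triple disjoint from it, which is what makes the complement construction close up. Everything else reduces to Pigeonhole counting on $[4k+2]$ and the Fano-system intersection properties already established.
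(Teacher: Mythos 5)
Your proposal is correct and follows essentially the same route as the paper's proof: stratify by $|S|$, decompose $S$ along $[n]=\mathcal{T}_7'\cup[4k+2]$, and construct disjoint partners by gluing complements in $[4k+2]$ to suitable pieces of $\mathcal{T}_7'$, invoking Lemma \ref{L:12} for the forbidden $3$-subsets and Lemma \ref{L:14.5} for the forbidden $4$-subsets $E_i$. The only cosmetic differences are that some of your partners land in $\mathcal{F}_8$ or $\mathcal{F}_i$ (size $2k+3$) where the paper's land in $\mathcal{D}$-blocks of size $2k+5$, and that for $|S|\leq 2k+1$ the paper simply picks $2k+7$ elements of $[n]\setminus S$ to get a partner in $\mathcal{C}_{2k+7}=\mathcal{P}_n(2k+7)$, whereas you enlarge $S$ to a non-member of size $2k+3$ and reduce to that case; both variants are valid.
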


\begin{proof}
For any $S\subset [n]$ such that $S\notin \mathcal{C}$ and $|S|$ is odd, we need to show that there exists $S'\in\mathcal{C}$ such that $S\cap S'=\emptyset$. We only need to consider the cases when $|S|\leq 2k+5$.

If $|S|\leq 2k+1$, then $S\notin\mathcal{C}$ and
\[
|[n] \setminus S|\geq (4k+9)-(2k+1)=2k+8.
\]
 We randomly pick $2k+7$ elements in $[n] \setminus S$ and denote such collection of elements by $S_1$. Clearly, $S_1\in\mathcal{C}_{2k+7}\subset\mathcal{C}$ and $S\cap S_1=\emptyset$.

If $|S|=2k+5$ and $S\notin\mathcal{C}$, then we write $S=S_1\cup S_2$ where $S_1\subset\mathcal{T}'_7$ and $S_2\subset [4k+2]$. There are three cases to consider. First, if $|S_2|=2k-1$ or $2k-2$, then
\[
|[4k+2]\setminus S_2|\in\{2k+3, 2k+4\}.
\]
We randomly pick $2k+3$ elements in $[4k+2]\setminus S_2$ and denote this collection of elements by $T_1$. We note that $T_1\in\mathcal{F}_8\subset\mathcal{C}$ and $S\cap T_1=\emptyset$. Second, if $|S_2|=2k+1$ and $|S_1|=4$, then $S_1\notin \mathcal{Q}_7(4)$ and hence $S_1=E_i$ for some $i\in [7]$. By Lemma \ref{L:14.5}, there exists a set $T_2\in \mathcal{A}_3^{4k+2}$ such that $S_1\cap T_2=\emptyset$. Also, we randomly choose $2k$ elements in $[4n+2]\setminus S_2$ and denote this set by $T_3$. It is clear that $S_2 \cap T_3=\emptyset$. Let $T_4=T_2\cup T_3$. It is clear that $T_4\in \mathcal{F}_j \subset \mathcal{C}$ for some $j\in [7]$ and $S\cap T_4=\emptyset$. Third, if $|S_2|=2k+2$ and $|S_1|=3$, then $|[4k+2]\setminus S_2|=2k$ and $S_1\notin \mathcal{A}_3^{4k+2}$. By Lemma \ref{L:12}, there exists $T_5\in\mathcal{A}_3^{4k+2}$ such that $S_1 \cap T_5=\emptyset$. Let $T_6:=[4k+2]\setminus S_2$ and  $T_7:=T_5\cup T_6$. It is clear that $S\cap T_7=\emptyset$ and $T_7\in \mathcal{F}_{i'}\subset \mathcal{C}$ for some $i'\in [7]$.

If $|S|=2k+3$ and $S\notin \mathcal{C}$, it implies that $S\notin \mathcal{C}_{2k+3}$. We write $S=S_1\cup S_2$ where $S_1\subset\mathcal{T}'_7$ and $S_2\subset [4k+2]$. Then there are a few cases to consider.

If $|S_1|=7$ and $|S_2|=2k-4$, then $|[4k+2]\setminus S_2|=2k+6$. We randomly pick $2k+5$ elements in $[4k+2]\setminus S_2$ and denote this collection of elements by $S_3$. Clearly, $S_3\in\mathcal{D}_1\subset\mathcal{C}$ and $S\cap S_3=\emptyset$.

If $|S_1|=6$ and $|S_2|=2k-3$, then $|\mathcal{T}'_7 \setminus S_1|=1$ and $|[4k+2]\setminus S_2|=2k+5$. Let $s'\in (\mathcal{T}'_7 \setminus S_1)$. We randomly pick $2k+4$ elements in $[4k+2]\setminus S_2$ and denote this collection of elements by $S_4$. Let $S_5=\{s'\}\cup S_4$. Clearly, $S_5\in \mathcal{D}_2\subset\mathcal{C}$ and $S\cap S_5=\emptyset$.

If $|S_1|=5$ and $|S_2|=2k-2$, then $|\mathcal{T}'_7 \setminus S_1|=2$ and $|[4k+2] \setminus S_2|=2k+4$. Let $S_6= \mathcal{T}'_7\setminus S_1$. We randomly pick $2k+3$ elements in $[4k+2] \setminus S_2$ and denote this collection of elements by $S_7$. Let $S_8 = S_6 \cup S_7$. Clearly, $S_8\in\mathcal{D}_3\subset\mathcal{C}$ and $S\cap S_8=\emptyset$.

If $|S_1|=4$ and $|S_2|=2k-1$, then $|\mathcal{T}'_7 \setminus S_1|=3$ and $|[4k+2] \setminus S_2|=2k+3$. We randomly pick 2 elements in $\mathcal{T}'_7 \setminus S_1$ and denote this collection of elements by $S_9$. Let $S_{10} = [4k+2] \setminus S_2$. Let $S_{11}=S_9 \cup S_{10}$. Clearly, $S_{11}\in\mathcal{D}_3\subset\mathcal{C}$ and $S\cap S_{11}=\emptyset$.

If $|S_1|=3$ and $|S_2|=2k$, then $S_1\notin \mathcal{A}^{4k+2}_3$. By Lemma \ref{L:12}, there always exists an element $S_{12}\in\mathcal{A}^{4k+2}_3$ such that $S_1\cap S_{12}=\emptyset$. Let $S_{13}=[4k+2] \setminus S_2$. Clearly, $|S_{13}|=2k+2$. Let $S_{14}=S_{12}\cup S_{13}$. Then $S_{14}\in \cup^{10}_{j=4} \mathcal{D}_j\subset\mathcal{C}$ and $S\cap S_{14}=\emptyset$.

If $|S_1|=2$ and $|S_2|=2k+1$, then $|\mathcal{T}'_7 \setminus S_1|=5$ and let $S_{15}=\mathcal{T}'_7 \setminus S_1$. We note that $|[4k+2] \setminus S_2|=2k+1$. We randomly pick $2k$ elements in $[4k+2]\setminus S_2$ and denote such collection of elements by $S_{16}$. Clearly, $S_1\cap S_{15}=\emptyset$ and $S_2\cap S_{16}=\emptyset$. Let $S_{17}=S_{15}\cup S_{16}$. It is clear that $S_{17}\in\mathcal{D}_{39}\subset\mathcal{C}$ and $S\cap S_{17}=\emptyset$.

If $|S_1|=1$ and $|S_2|=2k+2$, then $|\mathcal{T}'_7 \setminus S_1|=6$. We randomly pick $5$ elements in $\mathcal{T}'_7 \setminus S_1$ and denote such collection of elements by $S_{18}$. Let $S_{19} = [4k+2] \setminus S_1$. We note that $|S_{19}|=2k$. Let $S_{20}=S_{18}\cup S_{19}$. Clearly, $S_{20}\in\mathcal{D}_{39}\subset\mathcal{C}$ and $S\cap S_{20}=\emptyset$.
\end{proof}

\begin{corollary} \label{C:3}
The set $\mathcal{C}$ is a maximal commutative algebraic system in $\mathcal{P}_{4k+9}(*)$ for $k\geq 2$.
\end{corollary}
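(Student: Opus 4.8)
The plan is to assemble the three structural properties that together constitute a maximal commutative algebraic system, each of which has already been verified in the preceding lemmas. Recall from Section \ref{S:1} that a nonempty collection $\mathcal{C}$ of odd-size subsets in $\mathcal{P}_n(*)$ qualifies as a maximal commutative algebraic system exactly when it satisfies three conditions: it is \emph{algebraic} (closed under adjoining a disjoint even-size set), it is \emph{commutative} (any two members intersect non-trivially), and it is \emph{maximal} among commutative systems (every odd-size subset lying outside $\mathcal{C}$ is disjoint from at least one member of $\mathcal{C}$). The strategy is therefore to match each of these three clauses to the corresponding lemma and observe that the definitions align verbatim.

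First I would invoke Lemma \ref{L:15} to conclude that $\mathcal{C}$ is commutative, so that $S_1 \cap S_2 \neq \emptyset$ for all $S_1, S_2 \in \mathcal{C}$. Next I would invoke Lemma \ref{L:16} to conclude that $\mathcal{C}$ is algebraic: whenever $A_1 \in \mathcal{P}_n \setminus \mathcal{C}$ has even size and is disjoint from some $A_2 \in \mathcal{C}$, the union $A_1 \cup A_2$ again belongs to $\mathcal{C}$. Finally I would invoke Lemma \ref{L:17} for maximality, which guarantees that for every odd-size $S \in \mathcal{P}_n(*) \setminus \mathcal{C}$ there exists $S' \in \mathcal{C}$ with $S \cap S' = \emptyset$. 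Combining these three statements produces precisely the defining conditions of a maximal commutative algebraic system in $\mathcal{P}_{4k+9}(*)$.

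There is essentially no remaining obstacle at this stage, since all of the genuine combinatorial content has already been discharged in Lemmas \ref{L:15}--\ref{L:17}: the Fano-type intersection arguments resting on Lemmas \ref{L:12}--\ref{L:14.5}, the Pigeonhole counting across the decomposition $[n] = \mathcal{T}_7' \cup [4k+2]$, and the exhaustive case analysis on the size of $S \cap \mathcal{T}_7'$ for odd-size $S$. The only diligence required here is to confirm that the three lemmas collectively cover every clause of the definition without gap or overlap, which a direct comparison with the definitions in Section \ref{S:1} confirms. Hence the corollary follows immediately by conjunction of the three lemmas.
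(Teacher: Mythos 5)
Your proposal is correct and follows exactly the paper's own argument: the paper proves Corollary \ref{C:3} by citing Lemmas \ref{L:15}, \ref{L:16}, and \ref{L:17} for commutativity, algebraicity, and maximality respectively. Your version merely spells out how each lemma matches the corresponding clause of the definition, which is the same (correct) reasoning.
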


\begin{proof}
It is obvious by Lemma \ref{L:15}, Lemma \ref{L:16} and Lemma \ref{L:17}.
\end{proof}

\begin{remark}
In the decomposition $[n]=[4k+9]=\mathcal{T}_7' \cup [4k+2]$, there are $\binom{n}{7}$  choices  of  7 elements for $\mathcal{T}_7'$. For each such choice of $\mathcal{T}_7'$, we  can construct a maximal commutative algebraic system $\mathcal{C}$.
\end{remark}

We count the number of elements in $\mathcal{C}=\cup_{i=2k+3}^{4k+9} \mathcal{C}_i$,
\[
\begin{split}
  |\mathcal{C}_{2k+3}|   & =|\cup_{i=1}^7 \mathcal{F}_i|+|\mathcal{F}_8|=7 \textstyle  \binom{4k+2}{2k} +\binom{4k+2}{2k+3},\\
    |\mathcal{C}_{2k+5}| &= |\mathcal{D}_1|+|\mathcal{D}_2|+|\mathcal{D}_3|+|\cup_{i=4}^{10}\mathcal{D}_i|+|\cup_{i=11}^{38}\mathcal{D}_i|+|\mathcal{D}_{39}|\\
        &=\begin{aligned}[t]
            &\textstyle \binom{4k+2}{2k+5}+\binom{7}{1} \binom{4k+2}{2k+4} + \binom{7}{2} \binom{4k+2}{2k+3} +7\binom{4k+2}{2k+2}\\
            &+28\textstyle \binom{4k+2}{2k+1}+\binom{7}{5}\binom{4k+2}{2k},
        \end{aligned}\\
|\mathcal{C}_i| &= \textstyle \binom{4k+9}{i} \mbox{  for } i\geq 2k+7\mbox{ and }i\mbox{ is odd}.
\end{split}
\]

For $n=4k+9$ and $k\geq 2$, $K_n$ is defined as the following quantity:
\[
\textstyle
K_n:=\frac{|\mathcal{C}|}{2^{n-2}}.
\]

The following lemma can be checked numerically:

\begin{lemma} \label{L:18}
$K_n<1$ for $n=4k+9$, $k\geq 2$ and $n<1000$.
\end{lemma}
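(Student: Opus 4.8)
The plan is to treat this as what it is — a finite computational verification — but to first recast it in a transparent form. Since $K_n = |\mathcal{C}|/2^{n-2}$ and here $2^{n-2}=2^{4k+7}$, the assertion $K_n<1$ is exactly $|\mathcal{C}|<2^{n-2}$. Because $n=4k+9$ is odd, the set $[n]$ has precisely $2^{n-1}$ odd-size subsets, and $2^{n-2}=\tfrac12\,2^{n-1}$; thus $K_n<1$ is equivalent to the statement that $\mathcal{C}$ contains \emph{strictly fewer than half} of all odd-size subsets of $[n]$. The hypotheses $n=4k+9<1000$ and $k\geq 2$ isolate exactly the integers $k\in\{2,3,\ldots,247\}$, since $4\cdot 247+9=997<1000<1001=4\cdot 248+9$. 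Hence it suffices to verify $246$ explicit inequalities, one for each such $k$.

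For each $k$ in this range I would assemble $|\mathcal{C}|$ from the closed forms already established above:
\[
|\mathcal{C}| = |\mathcal{C}_{2k+3}| + |\mathcal{C}_{2k+5}| + \sum_{\substack{2k+7 \leq i \leq 4k+9 \\ i \text{ odd}}} \binom{4k+9}{i},
\]
using the displayed expressions for $|\mathcal{C}_{2k+3}|$ and $|\mathcal{C}_{2k+5}|$. To keep the computation entirely in terms of binomial coefficients and powers of two, I would eliminate the large tail via the elementary identity $\sum_{i \text{ odd}}\binom{4k+9}{i}=2^{4k+8}$ (valid as $4k+9$ is odd), which yields
\[
\sum_{\substack{2k+7 \leq i \leq 4k+9 \\ i \text{ odd}}} \binom{4k+9}{i} = 2^{4k+8} - \sum_{\substack{1 \leq i \leq 2k+5 \\ i \text{ odd}}} \binom{4k+9}{i}.
\]
Substituting and using $2^{4k+8}-2^{4k+7}=2^{4k+7}$, the claim $K_n<1$ becomes the equivalent \emph{head inequality}
\[
\sum_{\substack{1 \leq i \leq 2k+5 \\ i \text{ odd}}} \binom{4k+9}{i} \;-\; |\mathcal{C}_{2k+3}| \;-\; |\mathcal{C}_{2k+5}| \;>\; 2^{4k+7}.
\]
The left side counts exactly the odd-size subsets of size at most $2k+5$ that do \emph{not} lie in $\mathcal{C}$ (every odd subset of size $\geq 2k+7$ is already contained in $\mathcal{C}$ by \eqref{TDrSM}), so this inequality is a finite, purely integer statement that I would evaluate for every $k$.

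There is no conceptual obstacle here; the only genuine point of care — and the reason the argument must be done carefully rather than casually — is numerical precision. For $k$ near $247$ the coefficients $\binom{4k+9}{i}$ and the threshold $2^{4k+7}$ each have several hundred decimal digits, and the two sides of the head inequality are expected to be comparable in magnitude (by analogy with the sequences $s_k$ of Section \ref{Section3}, the ratio $K_n$ appears to approach $1$), so floating-point evaluation risks cancellation that could spuriously reverse a strict inequality near the boundary. I would therefore perform the entire verification in exact arbitrary-precision integer arithmetic, where each comparison is unambiguous, confirming the strict inequality in all $246$ cases and hence $K_n<1$. I note that this settles only the stated finite range; establishing $K_n<1$ for \emph{all} $k\geq 2$ would instead require an asymptotic estimate of the head sum against $2^{4k+7}$, in the spirit of the limit computations following Theorem \ref{T:2}, which is not needed for the present lemma.
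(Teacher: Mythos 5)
Your proposal is correct and takes essentially the same approach as the paper: the paper offers no argument beyond stating that the lemma ``can be checked numerically,'' i.e., a finite computational verification over $k\in\{2,\ldots,247\}$, which is exactly what you carry out. Your exact-integer reformulation as a ``head inequality'' and your use of arbitrary-precision arithmetic are sensible safeguards within that same computation, not a different route.
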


\begin{remark}
We note that the sequence $\{K_{4i+1}\mbox{ }|\mbox{ }4\leq i\leq 249\}$ is an increasing sequence of values, where $K_{17}=0.98462$ and $K_{997}=0.99763$.
\end{remark}

The following corollary shows that a conjecture made by Domokos and Zubor (Conjecture 7.3 in \cite{Domokos_Zubor}) is false.

\begin{corollary} \label{C:4}
Let $n=4k+1$ such that $17\leq n<1000$. There exist maximal commutative subalgebras in the Grassmann algebra $G(n)$ with dimension less than $3\cdot 2^{n-2}$.
\end{corollary}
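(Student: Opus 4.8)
The plan is to produce, for each admissible $n$, an explicit maximal commutative subalgebra of $G(n)$ whose dimension is strictly below $3\cdot 2^{n-2}$; the subalgebra $G_0\oplus M_{\mathcal{C}}$ built in Section~\ref{Section4} is the natural candidate. First I would reconcile the two parametrizations. Every $n$ with $n=4k+1$ and $17\le n<1000$ can be rewritten as $n=4\ell+9$ with $\ell=(n-9)/4$, and the constraint $n\ge 17$ is exactly $\ell\ge 2$, so the hypotheses of Section~\ref{Section4} are satisfied. By Corollary~\ref{C:3}, the set $\mathcal{C}$ is then a maximal commutative algebraic system in $\mathcal{P}_n(*)$, and by Lemma~\ref{L:1} the space $G_0\oplus M_{\mathcal{C}}$ is a maximal (with respect to inclusion) commutative subalgebra of $G(n)$.

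Next I would compute the dimension of this subalgebra. Since $\dim_F G_0=2^{n-1}$ and the monomials $v_I$ with $I\in\mathcal{C}$ form a basis of $M_{\mathcal{C}}$, we have
\[
\dim_F\bigl(G_0\oplus M_{\mathcal{C}}\bigr)=2^{n-1}+|\mathcal{C}|.
\]
The key observation is that $3\cdot 2^{n-2}=2^{n-1}+2^{n-2}$, so the desired strict inequality $\dim_F(G_0\oplus M_{\mathcal{C}})<3\cdot 2^{n-2}$ is \emph{equivalent} to the purely combinatorial bound $|\mathcal{C}|<2^{n-2}$.

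Finally I would invoke Lemma~\ref{L:18}, which asserts precisely that $K_n=|\mathcal{C}|/2^{n-2}<1$ for $n=4k+9$ with $k\ge 2$ and $n<1000$; this is exactly the range identified above. Hence $|\mathcal{C}|<2^{n-2}$, and therefore $\dim_F(G_0\oplus M_{\mathcal{C}})<3\cdot 2^{n-2}$, which establishes the corollary. I do not expect a genuine obstacle in the corollary itself: the real work has already been carried out, in the construction of $\mathcal{C}$ together with its commutativity, algebraicity, and maximality (Lemmas~\ref{L:15}--\ref{L:17} and Corollary~\ref{C:3}) and in the numerical verification recorded in Lemma~\ref{L:18}. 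The only point requiring care is the translation step, namely recognizing that the entire dimension comparison collapses to the single inequality $|\mathcal{C}|<2^{n-2}$ and that Lemma~\ref{L:18} covers exactly the claimed residues $n\equiv 1\pmod 4$ in the interval $17\le n<1000$.
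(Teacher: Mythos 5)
Your proof is correct and takes essentially the same route as the paper's: Corollary~\ref{C:3} and Lemma~\ref{L:1} produce the maximal commutative subalgebra $G_0\oplus M_{\mathcal{C}}$, and Lemma~\ref{L:18} gives $|\mathcal{C}|<2^{n-2}$, so that $\dim_F(G_0\oplus M_{\mathcal{C}})=2^{n-1}+|\mathcal{C}|<3\cdot 2^{n-2}$. Your explicit reconciliation of the parametrizations $n=4k+1$ and $n=4\ell+9$ (with $\ell\ge 2$ exactly when $n\ge 17$) is a detail the paper glosses over, but it does not alter the argument.
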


\begin{proof}
By Corollary \ref{C:3}, Lemma \ref{L:1} and Lemma \ref{L:18}, the maximal commutative subalgebra $G_0\oplus M_{\mathcal{C}}$ in $G(n)=G(4k+1)$ has dimension \[2^{n-1} +|\mathcal{C}|=2^{n-1} +|\cup_{i=2k+3}^{4k+9}\mathcal{C}_i|<2^{n-1}+2^{n-2}=3\cdot 2^{n-2}\]for $n=4k+1$ such that $17\leq n<1000$.
\end{proof}

We state the following two conjectures related to Lemma \ref{L:18} and Corollary \ref{C:4}.

\begin{conjecture} \label{co:5}
Is $K_n<1$ for all $n=4k+9$ with  $k\geq 2$?
\end{conjecture}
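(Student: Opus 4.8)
The plan is to prove the affirmative answer, $K_n<1$ for every $k\ge 2$, by reducing $|\mathcal{C}|<2^{n-2}$ to a sharp comparison of near-central binomial coefficients and then combining a uniform estimate for large $k$ with the finite verification of Lemma \ref{L:18}. First I would introduce the benchmark family $\mathcal{C}^{*}$ consisting of \emph{all} odd subsets of $[n]$ of size at least $2k+5$. Because $2(2k+5)=4k+10>n$, the family $\mathcal{C}^{*}$ is commutative, and it is maximal: any odd set of size at most $2k+3$ has a complement of size at least $2k+6$, which therefore contains an odd subset of size $2k+5$ disjoint from it. Using $\binom{n}{m}=\binom{n}{n-m}$ one rewrites $|\mathcal{C}^{*}|=\sum_{j=0}^{k+2}\binom{n}{2j}$, the block of even-indexed binomials up to the centre. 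Comparing this block term-by-term with the complementary block $\sum_{j\ge k+3}\binom{n}{2j}$ (the two blocks differ only by the unmatched central coefficient $\binom{n}{2k+4}$) shows that $\Delta:=|\mathcal{C}^{*}|-2^{n-2}$ is a \emph{strictly positive} explicit binomial sum. Thus even the ``full'' maximal family $\mathcal{C}^{*}$ already overshoots $2^{n-2}$, so no crude bound can succeed and the inequality $K_n<1$ must genuinely exploit the specific construction of $\mathcal{C}$.

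The second step locates $\mathcal{C}$ relative to $\mathcal{C}^{*}$. By construction $\mathcal{C}$ and $\mathcal{C}^{*}$ contain exactly the same odd sets of every size $\ge 2k+7$; $\mathcal{C}$ omits the $(2k+5)$-sets outside $\mathcal{C}_{2k+5}$ and adjoins the $(2k+3)$-sets of $\mathcal{C}_{2k+3}$. Writing $\delta_5=\binom{n}{2k+5}-|\mathcal{C}_{2k+5}|$ and $\delta_3=|\mathcal{C}_{2k+3}|$ we get $|\mathcal{C}|=|\mathcal{C}^{*}|-\delta_5+\delta_3$, so the conjecture is equivalent to the single inequality $\delta_5-\delta_3>\Delta$. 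All three quantities $\delta_5$, $\delta_3$ and $\Delta$ are now completely explicit sums of the binomials $\binom{4k+2}{2k+j}$ and $\binom{4k+9}{2k+j}$ coming from the formulas for $|\mathcal{C}_{2k+3}|$ and $|\mathcal{C}_{2k+5}|$ listed just before Lemma \ref{L:18}. I would normalise by the central coefficient $\binom{4k+2}{2k}$ and use the elementary ratio $\binom{N}{j+1}/\binom{N}{j}=(N-j)/(j+1)$ to write each term as a bounded rational function of $k$, reducing $\delta_5-\delta_3>\Delta$ to a single polynomial inequality in $k$ after clearing denominators.

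The heart of the matter is establishing that polynomial inequality for all $k\ge 2$. The difficulty is that $\delta_5$, $\delta_3$ and $\Delta$ are each of order $2^{n}/\sqrt{n}$ and nearly cancel: numerically the surviving gap is only a $\Theta(1/\sqrt{k})$ fraction of $2^{n-2}$, which is exactly why $K_n$ creeps from $K_{17}=0.98462$ up toward $1$ as recorded in the remark after Lemma \ref{L:18}. I would therefore (i) apply Stirling's formula with explicit two-sided error terms (equivalently, the local central limit estimate for the binomial distribution) to bound each normalised term, aiming at an inequality of the shape $\delta_5-\delta_3-\Delta\ge c\binom{4k+2}{2k}$ with an absolute constant $c>0$, valid for all $k\ge k_0$; and (ii) dispose of the finitely many residual cases $2\le k<k_0$ by the direct numerical evaluation already carried out in Lemma \ref{L:18} (enlarging its range to $k_0$ if necessary). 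The main obstacle is precisely the uniformity in step (i): since the leading central contributions to $\delta_5$, $\delta_3$ and $\Delta$ almost annihilate one another, one must track the Stirling error terms sharply enough that the genuine positive remainder of size $\Theta\big(\binom{4k+2}{2k}\big)$ is not swallowed, while simultaneously keeping $k_0$ small enough for the finite check of Lemma \ref{L:18} to close the remaining window.
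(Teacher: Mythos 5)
The statement you attempted is not proved in the paper at all: it is posed as an open question (Conjecture~\ref{co:5}), supported only by the numerical verification in Lemma~\ref{L:18} for $n<1000$. So your proposal must stand on its own, and judged that way it is a correct and genuinely useful \emph{reduction}, but not a proof. The reduction part checks out: with $\mathcal{C}^{*}$ the family of all odd subsets of $[n]$ of size at least $2k+5$, one has $|\mathcal{C}^{*}|=\sum_{j=0}^{k+2}\binom{n}{2j}$, and since $\mathcal{C}$ and $\mathcal{C}^{*}$ agree in all sizes $\geq 2k+7$,
\[
|\mathcal{C}|=|\mathcal{C}^{*}|-\delta_5+\delta_3,
\]
where, unpacking the formulas for $|\mathcal{C}_{2k+3}|$ and $|\mathcal{C}_{2k+5}|$,
\[
\delta_5=28\binom{4k+2}{2k+2}+7\binom{4k+2}{2k+1}+7\binom{4k+2}{2k-1}+\binom{4k+2}{2k-2},
\qquad
\delta_3=7\binom{4k+2}{2k}+\binom{4k+2}{2k+3},
\]
so $K_n<1$ is indeed equivalent to $\delta_5-\delta_3>\Delta:=|\mathcal{C}^{*}|-2^{n-2}$. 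As a sanity check, at $k=2$ this gives $|\mathcal{C}^{*}|=39203$, $\delta_5=8529$, $\delta_3=1590$, hence $|\mathcal{C}|=32264$ and $K_{17}=32264/2^{15}=0.98462$, matching the paper's value, so your bookkeeping is right.

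There are, however, two genuine gaps, the first of which would sink the argument if executed as written. First, your evaluation of $\Delta$ is wrong. For odd $n$ the even-index binomial coefficients are not symmetric among themselves (reflection $\binom{n}{2j}=\binom{n}{n-2j}$ lands on an \emph{odd} index), so there is no term-by-term matching of your two blocks leaving one ``unmatched central coefficient $\binom{n}{2k+4}$''. The correct evaluation reflects the upper block onto odd indices and applies $\sum_{m=0}^{M}(-1)^m\binom{n}{m}=(-1)^{M}\binom{n-1}{M}$, giving $\Delta=\tfrac12\binom{n-1}{2k+4}=\tfrac12\binom{4k+8}{2k+4}$. The difference is essentially a factor of $2$, and it decides everything: normalizing by $C=\binom{4k+2}{2k+1}$, one has $\delta_5-\delta_3\to 35\,C$ (the coefficient sum $28+7+7+1-7-1$), while $\tfrac12\binom{4k+8}{2k+4}\to 32\,C$ but $\tfrac12\binom{4k+9}{2k+4}\to 64\,C$. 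With your stated $\Delta$ the target inequality is asymptotically \emph{false} ($35<64$); with the correct $\Delta$ it is true ($35>32$), which is what the conjecture needs. Second, the heart of the matter --- proving $\delta_5-\delta_3>\tfrac12\binom{4k+8}{2k+4}$ for \emph{all} $k\geq 2$ --- is only announced, never carried out, and the Stirling-with-error-terms plan misdiagnoses the difficulty: the near-cancellation you worry about is an artifact of normalizing by $2^{n-2}$. In units of $C$ nothing cancels, and no asymptotics are needed, because every binomial involved is $C$ times an explicit rational function of $k$, e.g.
\[
\frac{\binom{4k+8}{2k+4}}{C}=\frac{(4k+3)(4k+4)(4k+5)(4k+6)(4k+7)(4k+8)}{\bigl((2k+2)(2k+3)(2k+4)\bigr)^{2}},
\]
so after clearing denominators the conjecture becomes a single explicit univariate polynomial inequality, whose validity on $k\geq 2$ can be settled by elementary algebra (the margin $\delta_5-\delta_3-\Delta$ equals $2C=504$ at $k=2$ and tends to $3C$). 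Until you write down and verify that polynomial inequality with the corrected $\Delta$, what you have is a promising reformulation of Conjecture~\ref{co:5}, not a resolution of it.
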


\begin{conjecture} \label{co:6}
Is the dimension of $G_0 \oplus M_{\mathcal{C}}$ minimal among all maximal commutative subalgebras of the Grassmann algebra $G(4k+9)$ for $k \geq 2$?
\end{conjecture}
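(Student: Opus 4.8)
The plan is to recast the statement as a purely combinatorial extremal problem. By Corollary \ref{C:1} and Lemma \ref{L:1}, every maximal commutative subalgebra of $G(n)$ has the form $G_0 \oplus M_T$ for a maximal commutative algebraic system $T \subseteq \mathcal{P}_n(*)$, and its dimension equals $2^{n-1} + |T|$. Hence minimality of $\dim(G_0 \oplus M_{\mathcal{C}})$ is equivalent to the inequality $|\mathcal{C}| \leq |T|$ for every maximal commutative algebraic system $T$ in $\mathcal{P}_{4k+9}(*)$. I would begin by recording the observation that a maximal commutative system is automatically algebraic: if $T$ is a maximal intersecting family of odd subsets of $[n]$, and $S \in T$, $E$ is even with $S \cap E = \emptyset$, then $S \cup E$ still meets every member of $T$, so maximality forces $S \cup E \in T$. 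Thus the competitors $T$ are exactly the \emph{maximal intersecting families of odd subsets of} $[n]$, and the task is to show $\mathcal{C}$ has minimum cardinality among these.

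Second, I would exploit the split of $[n]$, $n = 4k+9$, at the midpoint: call a set \emph{large} if its size is $\geq 2k+5$ and \emph{small} if its size is $\leq 2k+3$. Any two large sets intersect by the Pigeonhole Principle, since $2(2k+5)>n$, so a large set $S$ fails to lie in $T$ only when some small member of $T$ is disjoint from it. Writing $\mathcal{A} := T \cap \{\text{small sets}\}$ and $U(\mathcal{A})$ for the collection of large odd sets disjoint from at least one member of $\mathcal{A}$, maximality gives $T \cap \{\text{large}\} = \{\text{large}\} \setminus U(\mathcal{A})$, whence $|T| = L + |\mathcal{A}| - |U(\mathcal{A})|$, where $L = \sum_{i \geq 2k+5,\ i \text{ odd}} \binom{n}{i}$ is a constant. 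The problem therefore reduces to \emph{maximizing the coverage-minus-cost quantity} $|U(\mathcal{A})| - |\mathcal{A}|$ over admissible small kernels $\mathcal{A}$, and to checking that the Fano-based kernel $\mathcal{A}_{\mathcal{C}} = \mathcal{C}_{2k+3} = \cup_{i=1}^{8}\mathcal{F}_i$ is optimal, with $U(\mathcal{A}_{\mathcal{C}})$ accounting precisely for the sets omitted from $\mathcal{C}_{2k+5}$ and for the empty levels of size $\leq 2k+1$.

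Third, I would dispose of kernels containing very small sets. If $\{j\} \in T$ then every member of $T$ contains $j$, forcing $T = \mathrm{Cone}(\{j\})$ and $|T| = 2^{n-2}$ by Lemma \ref{L:2}(iii); since $K_n < 1$ for the relevant $n$ (Lemma \ref{L:18}), such $T$ are strictly larger than $\mathcal{C}$. More generally, for each small minimum size $t$ I would use Lemma \ref{L:2}(ii), which places the cone of any member inside $T$, together with an Erd\H{o}s--Ko--Rado / Hilton--Milner type stability estimate, to show that kernels supported on sets much smaller than the near-midpoint value $2k+3$ are forced, through maximality, to have cardinality at least $2^{n-2}$. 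This should confine the extremal $\mathcal{A}$ to be supported on sizes $2k+1$ and $2k+3$, at which point the comparison becomes a finite but genuinely tight design-theoretic inequality.

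The hard part will be the last step: proving that among all admissible small kernels the Fano configuration maximizes $|U(\mathcal{A})| - |\mathcal{A}|$. The difficulty is structural rather than computational, and it is worth flagging explicitly why the usual machinery stalls. The extremal object here is a highly symmetric covering design, built from the $(4k+2)$-Fano system and the derived family $\mathcal{Q}_7(4)$, which is the \emph{opposite} of the compressed, star-like families that optimize classical Erd\H{o}s--Ko--Rado problems; indeed those star-like families correspond exactly to the $t=1$ case of size $2^{n-2}$ that we must rule out. Consequently the standard shifting technique pushes in the wrong direction for a minimization problem, and a successful argument will likely require a tailored fractional-covering or spectral bound on $|U(\mathcal{A})| - |\mathcal{A}|$ that rewards the balanced intersection pattern of the Fano plane. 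Given that the numerics show $|\mathcal{C}|$ approaching $2^{n-2}$ from below with $K_n \to 1$, I expect the clean inequality either to hold only asymptotically or to depend essentially on the precise projective-plane structure, and I would treat the exact finite-$k$ minimality as the crux on which the conjecture turns.
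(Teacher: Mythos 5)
You are attempting to prove Conjecture \ref{co:6}, which the paper itself leaves open: the authors offer no proof, only the numerical evidence behind Lemma \ref{L:18} (verified for $n<1000$) and the companion open questions Conjecture \ref{co:5} and the Conjecture \ref{Co:2}-type minimality problem for bicommutative systems. So there is no paper proof to compare against, and your text is not a proof either --- by your own admission the decisive step is missing. To give credit where due: your opening reductions are correct and genuinely clarifying. The observation that a maximal commutative system of odd sets is automatically algebraic is right (any $B\in T$ disjoint from $S\cup E$ would be disjoint from $S$), the dimension formula $2^{n-1}+|T|$ follows from Lemma \ref{L:1} and Corollary \ref{C:1} (modulo the homogeneity reduction the paper imports from Domokos--Zubor), and the identity $|T| = L + |\mathcal{A}| - |U(\mathcal{A})|$ with $L=\sum_{i\geq 2k+5,\ i\ \mathrm{odd}}\binom{n}{i}$ is a valid counting reformulation, since any two sets of size $\geq 2k+5$ intersect when $n=4k+9$.

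The gaps are these. First, your notion of ``admissible small kernel'' is underdetermined: maximality at small levels can be witnessed by \emph{large} members of $T$, not only by members of $\mathcal{A}$. In particular, for $n=4k+9$ the empty kernel is admissible --- a set of size $2k+3$ admits a disjoint witness of size $2k+5$ since $(2k+3)+(2k+5)\leq 4k+9$ --- so $T_0=\{\,S : |S|\ \mathrm{odd},\ |S|\geq 2k+5\,\}$ is itself a maximal commutative algebraic system that your optimization must also dominate, and your constraint set for $\mathcal{A}$ must encode such mixed witnessing. Second, the confinement of extremal kernels to sizes $2k+1$ and $2k+3$ via an ``Erd\H{o}s--Ko--Rado / Hilton--Milner stability estimate'' is asserted, not proved; only the singleton case actually follows from Lemma \ref{L:2}(iii), and even that comparison requires $|\mathcal{C}|<2^{n-2}$, i.e.\ Lemma \ref{L:18}, which is itself only numerical for $n<1000$ and conjectural beyond (Conjecture \ref{co:5}), so your argument inherits an unproved hypothesis at its very first exclusion step. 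Third, the crux --- that the Fano-based kernel maximizes $|U(\mathcal{A})|-|\mathcal{A}|$ --- you explicitly leave open, and it is at least as hard as Conjecture \ref{Co:2}, which the authors could not resolve even in the fixed-size setting of $m$-bicommutative systems. What you have written is a plausible research program that correctly reduces the conjecture to an extremal set-theoretic inequality; it does not close, or even substantially narrow, the conjecture itself.
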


\section{Acknowledgement}
We are grateful to Professor Alexander Grishkov for  valuable discussions on the topic.

\newpage

\bibliographystyle{abbrv}

\end{document}